%% LyX 2.1.2 created this file.  For more info, see http://www.lyx.org/.
%% Do not edit unless you really know what you are doing.
\documentclass[11pt,english]{article}
\usepackage[T1]{fontenc}

\newcommand{\ignore}[1]{}%

%\biboptions{longnamesfirst,angle,semicolon}
\usepackage[latin9]{inputenc}
\usepackage{geometry}
\geometry{verbose,tmargin=1in,bmargin=1in,lmargin=4cm,rmargin=1in,headsep=0.5cm,footskip=1cm}
\usepackage{calc}
\usepackage{amsthm}
\usepackage{amsmath}
\usepackage{amssymb}
\usepackage{graphicx}

\makeatletter
%     \newtheorem*{claim*}{\protect\claimname}

%\ignore{

%%%%%%%%%%%%%%%%%%%%%%%%%%%%%% Textclass specific LaTeX commands.
  \theoremstyle{plain}
  \newtheorem*{theorem*}{\protect\theoremname}
\theoremstyle{plain}
\newtheorem{theorem}{\protect\theoremname}[section]
\theoremstyle{plain}
\newtheorem{conjecture}{\protect\conjecturename}[section]
  \theoremstyle{plain}
  \newtheorem{fact}{\protect\factname}[section]
  \theoremstyle{plain}
  \newtheorem*{proposition*}{\protect\propositionname}
  \theoremstyle{plain}
  \newtheorem{corollary}{\protect\corollaryname}[section]
  \theoremstyle{plain}
  \newtheorem{lemma}{\protect\lemmaname}[section]
  \theoremstyle{remark}
  \newtheorem{claim}{\protect\claimname}[section]
  \theoremstyle{plain}
  \newtheorem*{lemma*}{\protect\lemmaname}
  \theoremstyle{plain}
  \newtheorem*{corollary*}{\protect\corollaryname}
  \theoremstyle{remark}
  \newtheorem*{claim*}{\protect\claimname}
  \theoremstyle{plain}
  \newtheorem*{conjecture*}{\protect\conjecturename}
  \theoremstyle{remark}
  \newtheorem*{acknowledgement*}{\protect\acknowledgementname}
  \theoremstyle{plain}
  \newtheorem*{fact*}{\protect\factname}

%%%%%%%%%%%%%%%%%%%%%%%%%%%%%% User specified LaTeX commands.

\renewcommand{\overline}{\bar }

\providecommand{\E}{\mathrm{E}}
\newcommand{\Var}{\mathrm{Var}}

\makeatother

  \providecommand{\acknowledgementname}{Acknowledgement}
  \providecommand{\claimname}{Claim}
  \providecommand{\conjecturename}{Conjecture}
  \providecommand{\corollaryname}{corollaryollary}
  \providecommand{\factname}{Fact}
  \providecommand{\lemmaname}{lemmama}
  \providecommand{\propositionname}{propositionosition}
  \providecommand{\theoremname}{Theorem}
\providecommand{\theoremname}{Theorem}

\title
{Boolean functions whose Fourier transform is concentrated on pairwise
disjoint subsets of the input}

\author{Aviad Rubinstein\thanks{UC Berkeley. aviad@eecs.berkeley.edu} 
\and Muli Safra\thanks{Tel-Aviv University. muli.safra@gmail.com}}
%\ead{aviad@eecs.berkeley.edu}
%\cortext{corollaryresponding author}
%\address{Soda Hall, UC Berkeley, Berkeley, CA 94720, USA}

%\ead{muli.safra@gmail.com}
%\address{Schreiber Building, Tel Aviv University, Ramat Aviv, Tel Aviv 69978, Israel}

\begin{document}
%\begin{frontmatter}
\maketitle

\begin{abstract}
We consider Boolean functions $f\colon\left\{ \pm1\right\} ^{m}\rightarrow\left\{ \pm1\right\} $
that are close to a sum of independent functions $\left\{ f_{j}\right\} $
on mutually exclusive subsets of the variables $\left\{ I_{j}\right\} \subseteq P\left(\left[m\right]\right)$.
We prove that any such function is close to just a single function
$f_{k}$ on a single subset $I_{k}$. 

We also consider Boolean functions $f\colon\mathbb{R}^{n}\rightarrow\left\{ \pm1\right\} $
that are close, with respect to any product distribution over $\mathbb{R}^{n}$,
to a sum of their variables. We prove that any such function is close
to one of the variables.

Both our results are independent of the number of variables, but depend
on the variance of $f$. I.e., if $f$ is $\left(\epsilon\cdot{\Var}f\right)$-close
to a sum of independent functions or random variables, then it is
$O\left(\epsilon\right)$-close to one of the independent functions
or random variables, respectively. We prove that this dependence on
${\Var}f$ is tight.

Our results are a generalization of \cite{FKN}, who proved a similar
statement for functions $f\colon\left\{ \pm1\right\} ^{n}\rightarrow\left\{ \pm1\right\} $
that are close to a linear combination of uniformly distributed Boolean
variables.
\end{abstract}

%\begin{keyword}
%Analysis of Boolean functions \sep Discrete Fourier transform \sep FKN Theorem
%\end{keyword}

%\end{frontmatter}

\section{Introduction}

Perhaps the simplest characteristic of functions is linearity%
\footnote{\emph{Linearity} refers to having degree one over $\mathbb{R}^{n}$;
do not confuse with linear functions over $\mathbf{GF}_{2}$, which
are simply parity functions.%
} , i.e. functions which are simply (weighted) sums of their variables.
The set of linear Boolean functions is rather limited: the only linear
Boolean functions are constant functions and dictatorships, i.e. functions
that depend on only one variable. 

Relaxing the notion of linearity, we say that a Boolean function $f\colon\left\{ \pm1\right\} ^{n}\rightarrow\left\{ \pm1\right\} $
is \emph{approximately linear} if it can be approximated by an affine
function of its variables, i.e. $f\approx\sum a_{i}x_{i}+a_{0}$.
Another, equivalent formulation of approximately linear asserts that
$f$'s Fourier coefficients are concentrated on the $1$-st and $0$-th
levels, i.e. $f^{>2}\approx0$. For example, from the latter definition
it is not hard to see that such functions in particular have low \emph{noise
sensitivity}. Informally, low noise sensitivity means that adding
a small random perturbation to the input $x$, is unlikely to change
the value of $f\left(x\right)$.

A theorem of Friedgut, Kalai, Naor proves that those approximately
linear functions have a unique structure: they are approximated by
dictatorships.
\begin{theorem*}
\textup{(FKN Theorem, Informal \cite{FKN})} Every balanced Boolean
function that is almost linear is almost a dictatorship.
\end{theorem*}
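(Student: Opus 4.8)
\emph{Proof plan.} The plan is a fourth-moment argument carried out on the Fourier side. Expand $f = \hat f(\emptyset) + \ell + g$, where $\ell = \sum_i a_i x_i$ with $a_i = \hat f(\{i\})$ is the linear part and $g$ is the part of $f$ supported on Fourier levels at least $2$; here ``balanced'' means $\hat f(\emptyset) = 0$ and ``almost linear'' means $\|g\|_2^2 \le \epsilon$, so by Parseval $\sigma^2 := \sum_i a_i^2 = 1 - \|g\|_2^2 \ge 1-\epsilon$. Let $k$ be an index maximizing $a_k^2$ and set $\tau := \sigma^2 - a_k^2 = \sum_{i \ne k} a_i^2$. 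Everything reduces to proving $\tau = O(\epsilon)$: once $a_k^2 \ge 1 - O(\epsilon)$, we get $\|f - \mathrm{sign}(a_k)x_k\|_2^2 = 2 - 2|a_k| = O(\epsilon)$, i.e. $f$ agrees with the dictatorship $x \mapsto \mathrm{sign}(a_k)x_k$ on all but an $O(\epsilon)$ fraction of inputs.

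To get at $\tau$ I would exploit the Boolean identity $f^2 \equiv 1$. Multiplying it by $\ell^2$, taking expectations, and expanding $f = \ell + g$ gives $\E[\ell^2] = \E[\ell^4] + 2\,\E[\ell^3 g] + \E[\ell^2 g^2]$. Three elementary computations for Rademacher linear forms complete the setup: $\E[\ell^2] = \sigma^2$; $\E[\ell^4] = 3\sigma^4 - 2\sum_i a_i^4$; and the Fourier support of $\ell^3$ is contained in levels $1$ and $3$, with $\widehat{\ell^3}(S) = 6\prod_{i \in S} a_i$ when $|S| = 3$. Since $g$ carries no Fourier mass below level $2$, Plancherel collapses the cross term to $\E[\ell^3 g] = 6 \sum_{|S| = 3}\hat f(S)\prod_{i\in S} a_i$. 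Discarding the manifestly nonnegative term $\E[\ell^2 g^2]$ and rearranging, I obtain $2\sum_i a_i^4 \ge 3\sigma^4 - \sigma^2 - 12\bigl|\sum_{|S|=3}\hat f(S)\prod_{i\in S} a_i\bigr|$.

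The step I expect to be the crux is estimating this level-$3$ correlation sharply enough to keep a linear (rather than square-root) dependence on $\epsilon$. Cauchy--Schwarz gives $\bigl|\sum_{|S|=3}\hat f(S)\prod_{i\in S}a_i\bigr| \le \|g\|_2 \bigl(\sum_{|S|=3}\prod_{i\in S}a_i^2\bigr)^{1/2}$, and the key point is that the second factor is $O(\tau)$ rather than $O(1)$: splitting the sum according to whether $S$ contains $k$, and using that every $3$-element $S$ involves at least two coefficients $a_i$ with $i \ne k$, yields $\sum_{|S|=3}\prod_{i\in S}a_i^2 \le \tfrac12 a_k^2\tau^2 + \tfrac16\tau^3 \le \tfrac12\tau^2$. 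Feeding this back, together with $\sum_i a_i^4 \le a_k^2\sigma^2 = \sigma^4 - \tau\sigma^2$ and $\sigma^2 = 1 - \|g\|_2^2$, the displayed inequality simplifies to $(1 - \|g\|_2^2)(2\tau - \|g\|_2^2) \le 6\sqrt{2}\,\tau\|g\|_2$; as soon as $\epsilon$ is below an absolute constant the $\tau\|g\|_2$ term is absorbed into the left-hand side, which forces $\tau \le \|g\|_2^2 \le \epsilon$, as required. (If one does not wish to assume $\hat f(\emptyset)=0$ exactly, a preliminary dichotomy disposes of the case $|\hat f(\emptyset)|$ near $1$, where $f$ is $O(\epsilon)$-close to a constant, and otherwise the same computation runs verbatim with $\sigma^2 = 1 - \hat f(\emptyset)^2 - \|g\|_2^2$; and as a fallback, if the self-improving step turns out to be delicate, one can first extract an $O(\sqrt\epsilon)$-close dictatorship from the crude bound, write $f = \mathrm{sign}(a_k)x_k(1 - 2\cdot\mathbf{1}_{A})$, and bootstrap by re-expressing the degree-$\ge 2$ coefficients of $f$ through those of $\mathbf{1}_A$ to conclude $\mu(A)(1 - \mu(A)) = O(\epsilon)$.)
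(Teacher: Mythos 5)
Your proof is correct, but it takes a genuinely different route from the paper's. You establish the balanced FKN statement directly on the uniform hypercube by a fourth-moment identity: multiplying $f^2\equiv 1$ by $\ell^2$ (with $\ell=f^{=1}$, $g=f^{\geq 2}$), using the exact Rademacher formulas $\E[\ell^2]=\sigma^2$ and $\E[\ell^4]=3\sigma^4-2\sum_i a_i^4$, discarding $\E[\ell^2g^2]\geq 0$, and controlling the single surviving cross term $\E[\ell^3 g]$ by Cauchy--Schwarz; the crucial point, which I verified, is that $\sum_{|S|=3}\prod_{i\in S}a_i^2\leq\tfrac12\tau^2$ because every $3$-set contains at least two indices other than $k$, so the correlation is $O(\tau\sqrt{\epsilon})$ and the final inequality $(1-\|g\|_2^2)(2\tau-\|g\|_2^2)\leq 6\sqrt{2}\,\tau\|g\|_2$ does force $\tau\leq\|g\|_2^2\leq\epsilon$ once $\epsilon$ is below an absolute constant (the complementary range being trivial), giving linear dependence on $\epsilon$ as required. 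The paper never argues this way: it obtains FKN as a special case of a much more general probabilistic statement, Theorem \ref{sum of sequence}, about sums of arbitrary independent (non-Boolean, non-symmetric) random variables, proved by elementary variance manipulations --- the reduction via Krein--Milman to two-valued summands, the constant-absolute-value approximations of Lemma \ref{lemma: Var|X+Y+c| > K * VarX * VarY}, and case analysis --- together with Kindler's $x_{n+1}$-symmetrization to handle unbalanced functions. Your approach buys brevity, self-containedness, and far better constants; what it gives up is exactly the paper's generality: the identities $x_i^2=1$ and the explicit $\ell^4$ moment have no analogue when the summands are arbitrary independent variables or block restrictions $f_j$ with arbitrary internal high-degree structure, so your method does not yield Theorem \ref{sum of sequence} or Corollary \ref{fkn-like}. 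One caveat on your closing parenthetical: in the unbalanced case the expansion acquires the additional cross term $2\hat f(\emptyset)\E[\ell^2 g]$, which is only $O\bigl(|\hat f(\emptyset)|\sqrt{\epsilon\tau}\bigr)$, so the computation does not run ``verbatim'' and the resulting bound degrades as the level-one weight shrinks --- consistent with Lemma \ref{tightness of main result-1}, which shows the naive unbalanced extension requires the $\Var f$ correction. For the balanced statement under review, however, your argument is sound.
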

Intuitively, one may expect such results to be true, because a linear
combination that is ``well-spread'' among many independent variables
(i.e. far from dictatorship of one variable) should be distributed
similarly to a ``bell-curved'' Gaussian; in particular, it should
be far from the $\pm1$ distribution of a Boolean function which is
\emph{bimodal}, i.e. has two distinct modes or ``peaks'' at $-1$
and $+1$.

\subsection{The long code and related works}

One of the most important historical driving forces in the study of
Boolean functions has been their applications to testing of error
correcting codes \cite{Bou}. In particular, the \emph{long code}
\cite{Long_code} can be viewed as evaluations of dictatorships functions.
Each codeword in the long code corresponds to the evaluation of a
dictatorship $f\left(X\right)=x_{i}$ on all the points on the $n$-dimensional
Boolean hypercube. Indeed, the long code is highly inefficient - since
there are only $n$ possible dictatorships, the long code encodes
$\log n$ bits of information in a $2^{n}$-bit codeword. Despite
its low rate, the long code is an important tool in many results on
hardness of approximation and probabilistically checkable proofs (such
as \cite{Long_code,long_code_app,long_code_app_2,KS_thesis,Fri_app_NP_hardness_VC,Bou_app_UG_hardness_Cuts_embeddability,FKN_app_dinur_PCP,Fri_app_hardness_MultiCut,Fri_app_UG_hardness_VC,Fri_app_UG_hardness_2IS}). 

The great virtue of the long code is that it is a \emph{locally testable
code}: It is possible to distinguish, with high probability, between
a legal codeword and a string that is far from any legal codeword,
by querying just a few random bits of the string. Naturally, this
is a highly desirable property when constructing probabilistically
checkable proofs, which are proofs that must be verified by reading
a few random bits of the proof. Using \emph{local} queries, it is
possible to estimate whether a Boolean function is approximately linear.
These properties can be used by long-code testers \cite{FKN_app_dinur_PCP}
together with the FKN Theorem described above.

\subsection{Our results}

In this work extend the intuition from the FKN Theorem, that a well-spread
sum of independent variables must be far from Boolean. In particular
we ask the following questions:
\begin{enumerate}
\item What happens when the variables are not uniformly distributed over
$\left\{ \pm1\right\} $? In particular, we consider variables which
are not even Boolean or symmetric.

In a social choice setting, it may be intuitive to consider a mechanism
that takes into account how strong is each voter's preference. For
example, in some countries the elections are known to be highly influenced
by the donations the candidates manage to collect (``argentocracy'').

In the context of computational complexity, Boolean analysis theorems
that consider non-uniform distributions have proven very useful. In
particular, Dinur and Safra use the \emph{$p$-biased long code}
in their proof of NP-hardness of approximation of the Vertex Cover
problem \cite{Fri_app_NP_hardness_VC}. In the $p$-biased long code
each codeword corresponds to a dictatorship, in a population where
each voter independently chooses $-1$ with probability $0<p<\frac{1}{2}$
and $+1$ with probability $1-p$. An extension of Friedgut's Junta
Lemma \cite{Fri98} to such non-uniform product distributions was
key to Dinur and Safra's proof.

In this work we prove that even when the variables are not uniformly
distributed over $\left\{ \pm1\right\} $, every Boolean function
that is close to their sum must be close to one of them:
\begin{theorem*}
\textup{(Theorem \ref{sum of sequence} for balanced functions, Informal)}
Every balanced Boolean function that is almost a linear combination
of independent functions (not necessarily Boolean or symmetric) is
almost a dictatorship.
\end{theorem*}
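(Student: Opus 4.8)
The plan is to distill the statement into a single clean \emph{two‑block lemma} and then bootstrap from it by a purely combinatorial argument, so that no induction on the number of blocks is performed — which is exactly what keeps the final bound independent of $m$. Throughout, $f$ is balanced, so $\widehat f(\emptyset)=0$ and $\Var f=\E[f^{2}]=1$, and $\|f-\sum_j f_j\|_2^2\le\epsilon$. For a set $S$ say it is \emph{cross} if it meets at least two blocks, and write $\mathrm{cw}(f):=\sum_{S\text{ cross}}\widehat f(S)^2$ and $V_j:=\Var\bigl(\E[f\mid x_{I_j}]\bigr)=\sum_{\emptyset\ne S\subseteq I_j}\widehat f(S)^2$. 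Since $\sum_j f_j$ carries no cross Fourier mass, $L^2$‑projection gives $\mathrm{cw}(f)\le\epsilon$, and since every nonempty $S$ is either inside one block or cross, $\sum_j V_j=1-\mathrm{cw}(f)\ge 1-\epsilon$. The target is: some $V_k\ge 1-O(\epsilon)$, equivalently $\|f-\E[f\mid x_{I_k}]\|_2^2=O(\epsilon)$ — i.e.\ $f$ is $O(\epsilon)$‑close to a function of the single block $I_k$ (which agrees with the original $f_k$ up to an additive constant and an $O(\sqrt\epsilon)$ perturbation).

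The two‑block lemma I would prove is: if $f\colon\{\pm1\}^A\times\{\pm1\}^B\to\{\pm1\}$ is balanced, $\phi:=\E[f\mid x_A]$, $\psi:=\E[f\mid x_B]$, $p:=\|\phi\|_2^2$, $q:=\|\psi\|_2^2$, and $\delta:=\|f-\phi-\psi\|_2^2$ (which equals the cross weight of the pair $(A,B)$), then $\min(p,q)=O(\delta)$. This is a fourth‑moment computation made tight by the boundedness of conditional expectations. On one side, independence of $\phi,\psi$ and $\E\phi=\E\psi=0$ give $\E[(\phi+\psi)^4]=\E[\phi^4]+6pq+\E[\psi^4]\ge p^2+q^2+6pq=(p+q)^2+4pq$. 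On the other side, put $u:=\phi+\psi-f$; then $\|u\|_2^2=\delta$, $|u|\le 3$ pointwise, and $\E[fu]=\E[f(\phi+\psi)]-1=(p+q)-1=-\delta$ exactly (using $\E[f\phi]=\|\phi\|_2^2$ and likewise for $\psi$). Expanding $(\phi+\psi)^4-1=(2fu+u^2)(2+2fu+u^2)=4fu+6u^2+4fu^3+u^4$ and using $|u^3|\le 3u^2$, $u^4\le 9u^2$ gives $\E[(\phi+\psi)^4]=1+2\delta+O(\delta)=1+O(\delta)$. Since $p+q=1-\delta$, the two estimates force $4pq\le O(\delta)$; as $\max(p,q)\ge(p+q)/2$ is bounded below, $\min(p,q)=pq/\max(p,q)=O(\delta)$.

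Now the main argument. Apply the lemma to each $I_j$ against its complement: with $W_j:=\Var(\E[f\mid x_{[m]\setminus I_j}])$, and since the cross weight of $(I_j,[m]\setminus I_j)$ is $\le\mathrm{cw}(f)\le\epsilon$, we get $\min(V_j,W_j)=O(\epsilon)$ for every $j$. \emph{Case A: some $W_{j_0}=O(\epsilon)$.} The Fourier families $\{S\subseteq I_j\}_{j\ne j_0}$ are pairwise disjoint and all contained in $\{T\subseteq[m]\setminus I_{j_0}\}$, so $\sum_{j\ne j_0}V_j\le W_{j_0}=O(\epsilon)$; hence $V_{j_0}\ge 1-O(\epsilon)$ and, since every $S\not\subseteq I_{j_0}$ is either inside $[m]\setminus I_{j_0}$ or cross, $\|f-\E[f\mid x_{I_{j_0}}]\|_2^2\le W_{j_0}+\mathrm{cw}(f)=O(\epsilon)$ — done with $k=j_0$. \emph{Case B: every $W_j>O(\epsilon)$, hence every $V_j=O(\epsilon)$.} Because all $V_j$ are tiny, greedily $2$‑color the blocks into $G_1,G_2$ with $\sum_{j\in G_i}V_j\ge 1/2-O(\epsilon)$ for $i=1,2$ (add blocks to $G_1$ until the running sum first reaches $1/2$; it overshoots by only $O(\epsilon)$). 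Applying the two‑block lemma to $\bigcup_{j\in G_1}I_j$ versus $\bigcup_{j\in G_2}I_j$ (whose mutual cross weight is still $\le\mathrm{cw}(f)\le\epsilon$) would force $\min\bigl(\Var(\E[f\mid x_{G_1}]),\Var(\E[f\mid x_{G_2}])\bigr)=O(\epsilon)$, contradicting $\Var(\E[f\mid x_{G_i}])\ge\sum_{j\in G_i}V_j\ge 1/2-O(\epsilon)$ once $\epsilon$ is below an absolute constant. So Case B is impossible, and we are done.

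The load‑bearing step — the main obstacle — is the two‑block lemma, and specifically obtaining error $O(\delta)$ rather than $O(\sqrt\delta)$: this rests on the exact identity $\E[fu]=-\delta$ and on the pointwise bound $|u|\le 3$ (valid precisely because $\phi,\psi$ are conditional expectations of a $\pm1$‑valued function), which together turn the potentially lossy cross terms $\E[fu^3],\E[u^4]$ into $O(\delta)$. Everything else — the cross‑weight/projection bookkeeping and the greedy $2$‑coloring — is routine. For the non‑balanced version, and the $\Var f$ appearing in the statement, one carries $\Var f=\E[f^2]$ through the same fourth‑moment computation in place of $1$.
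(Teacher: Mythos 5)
Your argument is correct, and it takes a genuinely different route from the paper. The paper never works with the function $f$ directly at the core of the proof: it reduces everything to a statement about arbitrary independent random variables (Lemma \ref{lemma: Var|X+Y+c| > K * VarX * VarY}, ${\Var}\left|X+Y\right|\gtrsim V\cdot\min\{{\Var}X,{\Var}Y\}/(V+E^{2})$), proved by first passing to balanced variables, reducing one variable to two-point support via Krein--Milman, and then a geometric case analysis with constant-absolute-value approximations (Claims \ref{cla:For-every x,x,y,y} and \ref{cla: const  | |}); the $n$-variable theorem follows by splitting the variables into two groups of comparable variance, and Corollary \ref{fkn-like} is then Fourier bookkeeping. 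You instead stay on the function side: your two-block lemma bounds $\min(p,q)=O(\delta)$ by a fourth-moment computation whose tightness comes from the exact identity $\E[f(\phi+\psi-f)]=-\delta$ and the pointwise bound $|\phi+\psi-f|\le 3$, both of which exploit that $\phi,\psi$ are conditional expectations of a bounded ($\pm1$) function; your case split over blocks (one block versus its complement, else a greedy two-coloring) plays the role of the paper's two-group partition, and the whole argument is Fourier-light enough to transfer to any product distribution, so it does cover ``not necessarily Boolean or symmetric'' block functions. What each approach buys: yours is shorter, has explicit small constants, and needs no reduction to two-point distributions; the paper's probabilistic lemma is strictly more general -- it is a statement about ${\Var}\left|\sum X_i\right|$ for arbitrary (unbounded) independent variables with the $V/(V+E^{2})$ correction, which is exactly what is needed for the unbalanced version of Corollary \ref{fkn-like} and for Lemma \ref{tightness of main result-1}'s tight ${\Var}f$ dependence. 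On that last point, your closing sentence (that the unbalanced case follows by ``carrying ${\Var}f$ through the same computation'') is not substantiated: once $\E f\neq0$ the centering identity and the $(p+q)^{2}+4pq$ lower bound both acquire first-moment terms, and this is precisely the regime where the paper needs the extra $V/(V+E^{2})$ factor; for the balanced statement you were asked to prove, however, your proof is complete.
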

\item What happens when rather than a sum of variables, we have a sum of
functions over disjoint subsets of Boolean variables?

In a social choice setting, it may be intuitive to consider a situation
where the population is divided into tribes; each tribe has an arbitrarily
complex internal mechanism, but the outcomes of all the tribes must
be aggregated into one communal decision by a simple (i.e. almost
linear) mechanism.

Furthermore, this theorem may lead to interesting applications in
computational theory settings where such special structures arise.
In fact, this was our original motivation for this work.

Observe that this question is tightly related to the previous question
because each arbitrary function over a subset of Boolean variables
can be viewed as an arbitrarily-distributed random variable.

In this work we prove that any balanced function that is close to
a sum of functions over disjoint subsets of its variables is almost
completely determined by a function on a single subset:
\begin{theorem*}
\textup{(Corollary \ref{fkn-like} for balanced functions, Informal)}
Every balanced Boolean function that is close to a sum of functions
on mutually exclusive subsets of the variables is close to a dictatorship
by one subset of the variables.
\end{theorem*}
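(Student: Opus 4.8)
The plan is to deduce the corollary from the main theorem on sums of independent functions (Theorem \ref{sum of sequence}). The key observation, already flagged in the excerpt, is that if $f\colon\{\pm1\}^m\to\{\pm1\}$ is within $\epsilon\cdot\Var f$ of $\sum_j f_j$ where each $f_j$ depends only on the coordinates in a subset $I_j$ and the $I_j$ are pairwise disjoint, then under the uniform (or $p$-biased product) distribution on $\{\pm1\}^m$ the random variables $Y_j := f_j(x|_{I_j})$ are mutually independent. So $f$ is $(\epsilon\cdot\Var f)$-close to a sum of independent real-valued random variables, and we are exactly in the hypothesis of the ``sum of sequence'' theorem. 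Applying that theorem gives an index $k$ and a function $g$ of $Y_k$ alone with $f$ being $O(\epsilon)$-close to $g(Y_k)$; since $Y_k$ is a (deterministic) function of $x|_{I_k}$, composing shows $f$ is $O(\epsilon)$-close to a Boolean function that depends only on the coordinates in $I_k$, which is the claimed ``dictatorship by one subset.''

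First I would set up the reduction cleanly: state precisely what distribution on $\mathbb{R}^n$ (here $n$ = number of tribes) the vector $(Y_1,\dots,Y_n)$ induces — it is the product of the pushforward distributions of the uniform measure on $\{\pm1\}^{I_j}$ under each $f_j$ — and check that $\Var f$ is unchanged by the reformulation. Second, I would verify the closeness hypothesis transfers verbatim: $\Pr_x[f(x)\neq \operatorname{sign}(\sum_j f_j)]$ and the $L_2$ distance $\|f - \sum_j f_j\|_2^2$ are computed over the same product measure, so the $(\epsilon\cdot\Var f)$ bound is literally the same quantity. Third, invoke Theorem \ref{sum of sequence} to get the single coordinate $k$, and then un-reduce: the output function on $\mathbb{R}^n$ given by the theorem depends only on the $k$-th coordinate, i.e. only on $Y_k$, and pulling back along the map $x\mapsto f_k(x|_{I_k})$ yields a Boolean function on $\{\pm1\}^m$ measurable with respect to $I_k$ and $O(\epsilon)$-close to $f$. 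The ``balanced'' qualifier just means $\Var f = 1$ (or $\Theta(1)$), so the $\epsilon\cdot\Var f$ in the hypothesis and the $O(\epsilon)$ in the conclusion match up without the variance factors cluttering the statement.

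The main obstacle I anticipate is not the independence observation — that is immediate from disjointness of the $I_j$ — but rather confirming that Theorem \ref{sum of sequence} is stated in enough generality to accept these particular $Y_j$: they are discrete, not symmetric, not bounded in any normalized way, and there may be many of them (one per tribe, with no bound on the number of tribes). I would need the ``sum of sequence'' theorem to genuinely be distribution-free over arbitrary product distributions on $\mathbb{R}^n$ with the bound depending only on $\epsilon$ and $\Var f$ — which is exactly what the abstract promises (``independent of the number of variables, but depend on the variance of $f$''). A secondary, more cosmetic point is bookkeeping the relationship between the approximation error measured as a probability of disagreement versus as squared $L_2$ distance, and making sure the constant hidden in $O(\epsilon)$ is the one coming out of the main theorem; this is routine given the standard fact that for $\{\pm1\}$-valued functions these two notions of distance are within a factor of $4$ of each other. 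Finally, I would note that tightness of the $\Var f$ dependence is inherited directly from the tightness construction for Theorem \ref{sum of sequence}, so no separate lower-bound argument is needed for the corollary.
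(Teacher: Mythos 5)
Your proposal is correct and follows essentially the same route as the paper: view the restrictions $f_j$ as independent random variables, use Booleanness of $f$ to turn the $L^2$ hypothesis into a bound on $\Var\left|\sum_j f_j+\hat f\left(\emptyset\right)\right|$, and invoke Theorem \ref{sum of sequence} to isolate a single block $I_k$. The one point to state precisely is that the theorem returns a bound on $\Var\sum_{j\neq k}f_j$ rather than an approximating function outright, so the last step is the short Fourier computation $\left\Vert f-f_k-\hat f\left(\emptyset\right)\right\Vert_2^2\leq\Var\sum_{j\neq k}f_j+\sum_{S\colon\forall j,\,S\nsubseteq I_j}\hat f^2\left(S\right)$, exactly as in the paper.
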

\end{enumerate}
As we will see later, the precise statement of the FKN Theorem does
not require the Boolean function to be balanced. If we do not require
the function to be balanced, there is an obvious exception to the
theorem - \emph{constant functions}, $f\left(X\right)=1$ and $f\left(X\right)=-1$,
are not dictatorships but are considered linear. The general statement
of the FKN Theorem says that a Boolean function that is almost linear
is either almost a dictatorship or almost a constant function. More
precisely, it says that the distance%
\footnote{For the ease of introduction, we use the word ``distance'' in an
intuitive manner throughout this section. However, formally we will
use the squared-$L^{2}$ \emph{semi}distance. See Section \ref{sub:-squared-semi-metric}
for more details. %
} of any Boolean function from the nearest linear (not necessarily
Boolean) function is smaller by at most a constant multiplicative
factor than the distance from either a dictatorship or a constant
function.

One may hope to extend this relaxation to non-Boolean random variables
or subsets of Boolean random variables. E.g. we would like to claim
that the distance of any Boolean function from a sum of functions
on mutually exclusive subsets of the variables is at most the distance
from a function on a single subset or a constant function. However,
it turns out that this is not the case - in Lemma \ref{tightness of main result-1}
we show that this naive extension of the FKN Theorem is false!

The \emph{variance} of a Boolean function measures how far it is
from a constant (either $-1$ or $1$). For example, the variance
of any balanced Boolean function is $1$, whereas any constant function
has a variance of $0$. In order to extend our results to non-balanced
Boolean functions, we have to correct for the low variance. In Theorem
\ref{sum of sequence} and Corollary \ref{fkn-like} we prove that
the above two theorems extend to non-balanced functions relatively
to the variance:
\begin{theorem*}
\textup{(Theorem \ref{sum of sequence}, Informal)} Every Boolean-valued
function that is $(\epsilon\cdot\mbox{variance})$-close to a linear
combination of independent random variables (not necessarily Boolean
or symmetric) is $\epsilon$-close to a dictatorship.
\end{theorem*}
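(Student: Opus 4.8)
The plan is to run an FKN-style argument phrased through the Efron--Stein (Hoeffding/ANOVA) decomposition of the product space, the one genuinely new ingredient being a truncation step to cope with the possibility that the $X_i$ are heavy-tailed. First I would normalize so that each $X_i$ has mean $0$ and variance $1$, discard the deterministic ones, and replace the given linear combination by the $L^2$-best affine approximant $L := a_0 + \sum_i a_i X_i$; by orthogonality of $\{1, X_1, \dots, X_n\}$ this forces $a_i = \E[f X_i]$ and $a_0 = \E f$. Writing $\eta := f - L$ and $\delta := \|\eta\|_2^2 \le \epsilon\,\Var f$, the Pythagorean identity gives $\sum_i a_i^2 = \Var f - \delta$, and since $L$ is the projection onto a subspace of the degree-$\le 1$ Efron--Stein levels, the Efron--Stein weight of $f$ above level $1$ is at most $\delta$. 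The whole statement then reduces to showing that this weight concentrates on one coordinate, i.e.\ $\max_i a_i^2 \ge \sum_i a_i^2 - O(\epsilon)$: granting this and letting $k$ attain the maximum, $\eta \perp \mathrm{span}\{1, X_1, \dots, X_n\}$ gives $\|f - (a_0 + a_k X_k)\|_2^2 = \delta + \sum_{i \ne k} a_i^2 = O(\epsilon)$, so $f$ is $O(\epsilon)$-close to an affine function of the single variable $X_k$ --- and, $f$ being $\pm 1$-valued, to an honest Boolean function of $X_k$ after pointwise rounding, which only decreases the distance.

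Booleanness enters through the bilinear form $T := (L^2)^{=2}$, the degree-$2$ Efron--Stein component of $L^2$. Expanding the square, the pieces $a_0^2$, $a_0 \sum_i a_i X_i$ and $\sum_i a_i^2 X_i^2$ are all supported on levels $\le 1$ (each $X_i^2$ depends on $X_i$ alone), so the only degree-$2$ contribution is the cross terms and $T = \sum_{i \ne j} a_i a_j X_i X_j$ exactly; since the $X_i X_j$ ($i < j$) are orthonormal, $\|T\|_2^2 = 2\bigl((\sum_i a_i^2)^2 - \sum_i a_i^4\bigr)$. On the other hand $f^2 \equiv 1$, hence $(f^2)^{=2} = 0$, and $L = f - \eta$ gives $L^2 = f^2 - 2 f\eta + \eta^2$, so $T = -2 (f\eta)^{=2} + (\eta^2)^{=2}$. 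The first term is harmless: projection onto level $2$ is a contraction on $L^2$ and $\|f\eta\|_2^2 = \E[f^2 \eta^2] = \E[\eta^2] = \delta$ because $f^2 = 1$, so $\|(f\eta)^{=2}\|_2 \le \sqrt\delta$. Thus $\|T\|_2 \le 2\sqrt\delta + \|(\eta^2)^{=2}\|_2$, and everything comes down to $\|(\eta^2)^{=2}\|_2 = O(\sqrt\delta)$; given that, $\|T\|_2^2 = O(\delta)$ yields $\sum_i a_i^4 \ge (\sum_i a_i^2)^2 - O(\delta)$, and dividing by $\sum_i a_i^2 = \Var f - \delta = \Theta(\Var f)$ --- which is also where the trivial regime $\Var f = O(\epsilon)$ (there $f$ is already $O(\epsilon)$-close to a constant) gets set aside --- gives $\max_i a_i^2 \ge \sum_i a_i^2 - O(\delta/\Var f) = \sum_i a_i^2 - O(\epsilon)$, the concentration claim.

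The hard part, and the step with no analogue in \cite{FKN}, is bounding $\|(\eta^2)^{=2}\|_2$. The crude estimate $\|(\eta^2)^{=2}\|_2 \le \|\eta^2\|_2 = \|\eta\|_4^2$ demands control of the fourth moment of $\eta = f - L$; in \cite{FKN} this comes from hypercontractivity (plus $x_i^2 \equiv 1$), but for a general product distribution neither tool is available and $\eta$ can have an arbitrarily large fourth moment when some $X_i$ is heavy-tailed. The way out is that a $\pm 1$-valued $f$ can only be $\delta$-close to $L$ if $L$ is itself concentrated near $\{-1, 1\}$: pointwise $(|L| - 1)^2 \le (f - L)^2$ (using $f^2 = 1$ and $f L \le |L|$), so $\E[(|L|-1)^2] \le \delta$, and on $\{|L| > 3\}$ one has $(f-L)^2 \ge \tfrac{4}{9} L^2$, whence $\E[L^2\,\mathbf{1}\{|L| > 3\}] = O(\delta)$ --- i.e.\ $L$ is bounded off an event carrying only $O(\delta)$ of its second moment. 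I would then shed from $L$ the heavy part of each coordinate (the mass of $a_i X_i$ outside a fixed window), absorbing it into the error at the cost of only a constant factor in $\delta$; after this reduction $\eta$ is effectively bounded, $\|\eta\|_4^4 \le \|\eta\|_\infty^2 \|\eta\|_2^2 = O(\delta)$, and $\|(\eta^2)^{=2}\|_2 = O(\sqrt\delta)$ follows. Making this precise --- deciding exactly which tail mass to shed, controlling how it is spread across coordinates, and verifying the shed weight really is $O(\epsilon\,\Var f)$ rather than merely $O(\sqrt{\epsilon}\,\Var f)$ (which would degrade the final bound) --- is where I expect essentially all of the work to lie.

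Everything else is bookkeeping: the orthogonality computations above, and the final passage from ``$f$ close to a linear combination of independent variables'' to ``$f$ close to a dictator'' via $\|f - (a_0 + a_k X_k)\|_2^2 = \delta + \sum_{i \ne k} a_i^2 = O(\epsilon)$ and rounding. The role of $\Var f$ is exactly the division by $\sum_i a_i^2$ in the concentration step --- a weaker hypothesis (only $\epsilon\,\Var f$-closeness to the linear combination) still suffices precisely because the conclusion is only $O(\epsilon)$-closeness to a dictator --- and Lemma \ref{tightness of main result-1} shows this dependence on $\Var f$ is unavoidable.
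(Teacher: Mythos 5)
Your proof takes a genuinely different route from the paper's. You run an FKN-style argument through the Efron--Stein decomposition: bound the degree-$2$ weight $\|T\|_2^2 = 2\bigl((\sum a_i^2)^2 - \sum a_i^4\bigr)$ of $(L^2)^{=2}$ from above using $f^2 \equiv 1$, and extract concentration of $\sum a_i^2$ onto a single coordinate. The paper proves the same theorem by a completely different, fully elementary route: it never touches degree-$2$ weight or fourth moments. It reduces to two variables $X,Y$, reduces further (via Krein--Milman) to $\overline{Y}$ with two-point support, replaces $\overline{X}+E$ and $\overline{Y}+E$ by constant-absolute-value approximants $X',Y'$, and finishes by a case analysis on signs in the four-point expansion of $\Var\left|X'+Y'-E\right|$ (Claims \ref{cla:For-every x,x,y,y} and \ref{cla: const  | |}). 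The authors explicitly chose this route \emph{because} they could not see how to make the degree-weight/hypercontractivity machinery work here; they also remark that the concurrent Jendrej--Oleszkiewicz--Wojtaszczyk work, which is closer in spirit to yours, initially obtained the result only for bounded-variance variables.

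The gap in your sketch is exactly where you expected it, but I believe it is more than bookkeeping. You need $\|(\eta^2)^{=2}\|_2 = O(\sqrt\delta)$, and your plan is $\|(\eta^2)^{=2}\|_2 \le \|\eta\|_4^2$ with $\|\eta\|_4^4 \le \|\eta\|_\infty^2\|\eta\|_2^2 = O(\delta)$ after a coordinatewise truncation. The problem is that truncating each $a_iX_i$ into a fixed window $[-C,C]$ bounds each summand but not the sum: $|L'| = |a_0 + \sum_i a_i X_i'|$ can still be of order $nC$ on positive-probability events, so $\eta' = f - L'$ is not bounded in $L^\infty$ and the step $\|\eta'\|_4^4 \le \|\eta'\|_\infty^2\|\eta'\|_2^2$ does not go through. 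Your concentration estimate $\E[L^2\,\mathbf{1}\{|L|>3\}] = O(\delta)$ is correct, but it controls only the second moment of the tail, and the fourth-moment tail $\E[L^4\,\mathbf{1}\{|L|>3\}]$ (which is what $\|\eta\|_4^4$ actually sees there) can be arbitrarily much larger for the same $\delta$: a single coordinate $L=a_0+a_1X_1$ taking the value $M$ with probability $\delta/M^2$ already has $\E[L^4\,\mathbf{1}]=\delta M^2 \to \infty$ while $\E[L^2\,\mathbf{1}]=\delta$. In that one-coordinate example the conclusion of the theorem is trivially true, so it is not a counterexample to the statement --- but it is a counterexample to the specific inequality $\|\eta\|_4^4=O(\delta)$ that your chain of reductions relies on. To rescue the argument you would either need a truncation of $L$ itself (not of the coordinates), which destroys the affine structure your degree-$2$ computation depends on, or a bound on $\|(\eta^2)^{=2}\|_2$ that exploits the Efron--Stein structure of $\eta^2$ rather than passing through $\|\eta\|_4^2$. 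Neither is supplied. Everything up to that point --- the orthogonality computations, $\|(f\eta)^{=2}\|_2 \le \sqrt\delta$, the role of $\Var f$ as the normalizer in the division by $\sum a_i^2$, and the final rounding --- is correct.
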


\begin{theorem*}
\textup{(Corollary \ref{fkn-like}, Informal)} Every Boolean function
that is $(\epsilon\cdot\mbox{variance})$-close to a sum of functions
on mutually exclusive subsets of the variables is $\epsilon$-close
to a dictatorship by one subset of the variables.
\end{theorem*}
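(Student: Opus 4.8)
The plan is to deduce Corollary~\ref{fkn-like} from Theorem~\ref{sum of sequence} by exploiting the fact that, under the uniform distribution on $\{\pm1\}^{m}$, a sum of functions supported on pairwise disjoint blocks of coordinates is literally a sum of independent real random variables. So suppose $f\colon\{\pm1\}^{m}\to\{\pm1\}$ is $(\epsilon\cdot\Var f)$-close to $g=\sum_{j}f_{j}$, where each $f_{j}$ depends only on the coordinates in $I_{j}$ and the $I_{j}$ are pairwise disjoint. First I would set $Y_{j}=f_{j}(x_{I_{j}})$ and note that, because the blocks $I_{j}$ are disjoint and the input is uniform, the $Y_{j}$ are mutually independent; hence $g=\sum_{j}Y_{j}$ is a linear combination of independent random variables (all coefficients equal to $1$; equivalently, writing $Y_{j}=\sqrt{\Var Y_{j}}\cdot X_{j}$ for unit-variance $X_{j}$ if Theorem~\ref{sum of sequence} is stated for normalized variables), which is exactly the object to which Theorem~\ref{sum of sequence} applies.

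Next I would invoke Theorem~\ref{sum of sequence} with $f$ and the family $\{Y_{j}\}$: since $f$ is $(\epsilon\cdot\Var f)$-close to this linear combination, the theorem yields a single index $k$ for which $f$ is $O(\epsilon)$-close to a dictatorship by $Y_{k}$, i.e.\ to a function of the one random variable $Y_{k}$ alone. Because $Y_{k}=f_{k}(x_{I_{k}})$ is a deterministic function of the coordinates in $I_{k}$, any function of $Y_{k}$ is in particular a function of $x_{I_{k}}$, so $f$ is $O(\epsilon)$-close, in squared-$L^{2}$ semidistance, to a real-valued function that depends only on the single subset $I_{k}$.

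Finally I would upgrade this to closeness to a genuine Boolean ``dictatorship by $I_{k}$'': since $f$ is $\{\pm1\}$-valued, rounding any real approximant pointwise to the nearer of $\pm1$ increases its distance to $f$ by at most a constant factor (the standard rounding estimate already used in the FKN setting), and the rounded function still depends only on $I_{k}$. This produces a Boolean function $g'\colon\{\pm1\}^{m}\to\{\pm1\}$ supported on $I_{k}$ with $\mathrm{dist}(f,g')=O(\epsilon)$, which is the assertion of the corollary.

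The main obstacle is not conceptual but a matter of matching the bookkeeping of Theorem~\ref{sum of sequence} precisely: I would need to check that the variance appearing there coincides with $\Var f$ up to the $1\pm O(\epsilon)$ fluctuation forced by the closeness hypothesis (so that the $\epsilon\cdot\Var$ scaling transfers correctly), and that whatever exact form of ``dictatorship'' Theorem~\ref{sum of sequence} outputs --- closeness to an affine function $aY_{k}+b$ of one variable --- can be converted to closeness to a Boolean function on $I_{k}$. The latter again uses that $f$ is Boolean: $aY_{k}+b$ is then itself forced to be close to $\{\pm1\}$-valued, so the rounding step of the previous paragraph applies. Because all of this is routine given Theorem~\ref{sum of sequence}, the statement is naturally phrased as a corollary; the real work lives in the proof of Theorem~\ref{sum of sequence}, and (as Lemma~\ref{tightness of main result-1} shows) the $\Var f$ factor in the hypothesis cannot be removed.
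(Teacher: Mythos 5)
Your proposal is correct and follows essentially the same route as the paper: treat the block-restrictions $f_j$ as independent random variables, apply Theorem~\ref{sum of sequence} to deduce that $\Var\sum_{j\neq k}f_j$ is small, and handle the $\Var f$ bookkeeping exactly as you describe (the paper does this directly in Fourier space, using $\Var\sum_j f_j \geq (1-\epsilon)\Var f$ and the identity $\|f - f_k - \widehat{f}(\emptyset)\|_2^2 = \sum_{S\nsubseteq I_k}\widehat{f}^2(S)$). One small remark: your final rounding-to-Boolean step is unnecessary, since the corollary's conclusion is closeness to the real-valued $f_k + \widehat{f}(\emptyset)$ rather than to a $\{\pm1\}$-valued function on $I_k$.
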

Intuitively these amendments to our main theorems mean that in order
to prove that a Boolean function is close to a dictatorship, we must
show that it is \emph{very} close to linear.

Finally, in Lemma \ref{tightness of main result-1} we show that this
dependence on the variance is necessary and \emph{tight}.

\subsection{Hypercontractivity}

Many theorems about Boolean functions rely on hypercontractivity theorems
such as the Bonami-Beckner Inequality (\cite{Bonami,Beckner}). Writing
a real-valued function over $\left\{ \pm1\right\} ^{n}$ as a polynomial
yields a distribution over the monomials' degrees $\left\{ 0,\dots n\right\} $,
where the weight of $k$ is the sum of relative weights of monomials
of degree $k$. Hypercontractivity inequalities bound the ratios between
norms of real-valued functions over $\left\{ \pm1\right\} ^{n}$ in
terms of this distribution of weights over their monomials' degrees.
In this work it is not clear how to use such inequalities because
the functions in question may have an arbitrary weight on high degrees
\emph{within} each subset.

All of the proofs presented in this work are completely self-contained
and based on \emph{elementary methods}. In particular, we do not
use any hypercontractivity theorem. This simplicity makes our work
more accessible and intuitive. This trend is exhibited by some recent
related works, e.g. \cite{Sums_of_ind_symmetric_variables,Sums_of_iid_variables,Bou_for_Gaussian_variables},
that also present proofs that do not use hypercontractivity.

\subsection{Organization}

We begin with some preliminaries in Section \ref{sec:Preliminaries}.
In Section \ref{sec:Related-Works} we give a brief survey of related
works. In Section \ref{sec:Our-Main-Results} we formally state our
results. In Section \ref{sec:High-Level-Outline} we give an intuitive
sketch of the proof strategy. The interesting ingredients of the proof
appear in Section \ref{sec:Proofs}, whereas some of the more tedious
case analyses are postponed to Section \ref{sec:Proofs-of-technical}.
Tightness for some of the results is shown in Section \ref{sec:Tightness-of-results}.
Finally, in Section \ref{sec:Conjectures-and-extensions} we make
some concluding comments and discuss possible extensions.

\section{\label{sec:Preliminaries}Preliminaries}

\subsection{$L^{2}$-squared semi-metric\label{sub:-squared-semi-metric}}

Throughout the paper, we define ``closeness'' of random variables
using the squared $L^{2}$-norm:
\[
\left\Vert X-Y\right\Vert _{2}^{2}={\E}\left[\left(X-Y\right)^{2}\right].
\]
It is important to note that this is a \emph{semi}-metric as it
does not satisfy the triangle inequality. Instead, we will use the
$2$-relaxed triangle inequality:
\begin{fact}
\label{fac: relaxed triangle ineq}
\[
\left\Vert X-Y\right\Vert _{2}^{2}+\left\Vert Y-Z\right\Vert _{2}^{2}\geq\frac{1}{2}\left\Vert X-Z\right\Vert _{2}^{2}.
\]
\end{fact}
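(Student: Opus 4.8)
Proof proposal for Fact \ref{fac: relaxed triangle ineq} (the 2-relaxed triangle inequality).

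The plan is to reduce this to the elementary inequality $(a+b)^2 \le 2a^2 + 2b^2$, which holds for all reals since $2a^2 + 2b^2 - (a+b)^2 = (a-b)^2 \ge 0$. First I would write $X - Z = (X - Y) + (Y - Z)$ pointwise, so that at every point of the probability space we have $(X-Z)^2 = \bigl((X-Y) + (Y-Z)\bigr)^2 \le 2(X-Y)^2 + 2(Y-Z)^2$ by the displayed elementary bound applied with $a = X - Y$ and $b = Y - Z$.

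Next I would take expectations of both sides, using monotonicity and linearity of $\E$: this yields
\[
\left\Vert X-Z\right\Vert_2^2 = \E\bigl[(X-Z)^2\bigr] \le 2\,\E\bigl[(X-Y)^2\bigr] + 2\,\E\bigl[(Y-Z)^2\bigr] = 2\left\Vert X-Y\right\Vert_2^2 + 2\left\Vert Y-Z\right\Vert_2^2.
\]
Dividing by $2$ gives exactly $\left\Vert X-Y\right\Vert_2^2 + \left\Vert Y-Z\right\Vert_2^2 \ge \tfrac12 \left\Vert X-Z\right\Vert_2^2$, as claimed.

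There is essentially no obstacle here: the only point worth a moment's care is that all three quantities are finite (so the manipulations with expectations are legitimate), which is implicitly assumed whenever we speak of $\left\Vert\cdot\right\Vert_2$ for these random variables. One could alternatively derive this from the ordinary triangle inequality for the genuine $L^2$-norm $\left\Vert\cdot\right\Vert_2$ together with $(u+v)^2 \le 2u^2 + 2v^2$ applied to $u = \left\Vert X-Y\right\Vert_2$ and $v = \left\Vert Y-Z\right\Vert_2$, but the pointwise argument above is the most direct and self-contained route and matches the paper's stated preference for elementary methods.
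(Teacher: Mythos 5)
Your proof is correct. It takes a slightly different route from the paper's: the paper first applies the elementary bound $a^2+b^2\geq\tfrac12(a+b)^2$ to the scalars $a=\left\Vert X-Y\right\Vert_2$, $b=\left\Vert Y-Z\right\Vert_2$, and then invokes the genuine triangle (Minkowski) inequality for the $L^2$-norm to get $\left\Vert X-Y\right\Vert_2+\left\Vert Y-Z\right\Vert_2\geq\left\Vert X-Z\right\Vert_2$. You instead apply the same elementary bound pointwise to $(X-Z)^2$ and then take expectations, which avoids invoking Minkowski altogether and rests only on linearity and monotonicity of $\E$. The two arguments are of the same length and both hinge on $(a+b)^2\leq 2a^2+2b^2$; yours is marginally more self-contained, while the paper's is marginally shorter given Minkowski is taken as known. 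You even note the paper's variant as an alternative in your final paragraph, so the overlap is essentially complete.
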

\begin{proof}
\[
\left\Vert X-Y\right\Vert _{2}^{2}+\left\Vert Y-Z\right\Vert _{2}^{2}\geq\frac{1}{2}\left(\left\Vert X-Y\right\Vert _{2}+\left\Vert Y-Z\right\Vert _{2}\right)^{2}\geq\frac{1}{2}\left\Vert X-Z\right\Vert _{2}^{2}.
\]

\end{proof}
Although it is not a metric, the squared $L^{2}$-norm has some advantages
when analyzing Boolean functions. In particular, when comparing two
Boolean functions, the squared $L^{2}$-norm does satisfy the triangle
inequality because it is simply four times the Hamming distance, and
also twice the $L^{1}$-norm (``Manhattan distance''): $\left\Vert f-g\right\Vert _{2}^{2}=4\cdot\Pr\left[f\neq g\right]=2\left\Vert f-g\right\Vert _{1}$.

Additionally, the squared $L^{2}$-norm behaves ``nicely'' with
respect to the Fourier transform:
\begin{fact}
\label{fac:l2_norm-ft}
\[
\left\Vert f-g\right\Vert _{2}^{2}=\sum\left(\widehat{f}\left(S\right)-\widehat{g}\left(S\right)\right)^{2}.
\]

\end{fact}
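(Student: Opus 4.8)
The plan is to derive this identity from Parseval's theorem for the Fourier expansion over the uniform distribution on $\left\{ \pm1\right\} ^{n}$. First I would recall that every real-valued $h\colon\left\{ \pm1\right\} ^{n}\to\mathbb{R}$ admits the expansion $h=\sum_{S\subseteq[n]}\widehat{h}\left(S\right)\chi_{S}$, where $\chi_{S}\left(x\right)=\prod_{i\in S}x_{i}$ and $\widehat{h}\left(S\right)=\E\left[h\cdot\chi_{S}\right]$. The crux is the orthonormality of the characters $\left\{ \chi_{S}\right\} $ under the uniform distribution: since $\chi_{S}\cdot\chi_{T}=\chi_{S\triangle T}$ and $\E\left[\chi_{U}\right]=\prod_{i\in U}\E\left[x_{i}\right]=0$ whenever $U\neq\emptyset$ (the coordinates being independent and uniform on $\left\{ \pm1\right\} $), we obtain $\E\left[\chi_{S}\chi_{T}\right]=\mathbf{1}\left[S=T\right]$.

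Second, I would apply this to $h:=f-g$. From the defining formula $\widehat{h}\left(S\right)=\E\left[h\cdot\chi_{S}\right]$ and linearity of expectation, $\widehat{h}\left(S\right)=\widehat{f}\left(S\right)-\widehat{g}\left(S\right)$ for every $S$. Expanding the square and interchanging the (finite) sum with the expectation then gives
\[
\left\Vert f-g\right\Vert _{2}^{2}=\E\left[h^{2}\right]=\E\left[\sum_{S,T}\widehat{h}\left(S\right)\widehat{h}\left(T\right)\chi_{S}\chi_{T}\right]=\sum_{S,T}\widehat{h}\left(S\right)\widehat{h}\left(T\right)\E\left[\chi_{S}\chi_{T}\right]=\sum_{S}\widehat{h}\left(S\right)^{2},
\]
which is exactly $\sum_{S}\left(\widehat{f}\left(S\right)-\widehat{g}\left(S\right)\right)^{2}$, as claimed.

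The only point that requires any care — and it is hardly an obstacle — is the orthonormality of the characters; once that is established, the identity follows by a one-line expansion, and there are no convergence subtleties since all sums range over the finite power set $P\left(\left[m\right]\right)$. Indeed, if one is willing to quote Parseval's theorem $\left\Vert h\right\Vert _{2}^{2}=\sum_{S}\widehat{h}\left(S\right)^{2}$ as known, the entire proof reduces to the observation that the Fourier transform is linear, so $\widehat{\left(f-g\right)}=\widehat{f}-\widehat{g}$.
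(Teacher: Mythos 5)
Your proof is correct and takes essentially the same route as the paper: apply Parseval's identity to $h=f-g$ and use linearity of the Fourier transform, so that $\widehat{f-g}\left(S\right)=\widehat{f}\left(S\right)-\widehat{g}\left(S\right)$. The only difference is that you also spell out the proof of Parseval via orthonormality of the characters, which the paper simply quotes.
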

(Proofs of Facts \ref{fac:l2_norm-ft}-\ref{fac:var-min_l2_norm}
are standard and are included in the appendix for completeness.)

\subsection{Variance}

The variance of random variable $X$ is defined as 
\[
{\Var}X={\E}\left[X^{2}\right]-\left({\E}X\right)^{2}.
\]
Observe that for a function $f$ the variance can also be defined
in terms of its Fourier coefficients,
\begin{fact}
\label{fac:var-ft}
\[
{\Var}f=\sum_{S\neq\emptyset}\widehat{f}\left(S\right)^{2}.
\]

\end{fact}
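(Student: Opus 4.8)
The plan is to derive this from Parseval's identity, which is itself an immediate consequence of Fact \ref{fac:l2_norm-ft}. First I would record the standard observation that the empty-set Fourier coefficient equals the mean: since the character indexed by $\emptyset$ is identically $1$, we have $\widehat{f}\left(\emptyset\right)={\E}\left[f\cdot\chi_{\emptyset}\right]={\E}f$.

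Next I would invoke Fact \ref{fac:l2_norm-ft} with $g\equiv0$, whose Fourier transform vanishes identically; this yields the Parseval identity ${\E}\left[f^{2}\right]=\left\Vert f\right\Vert _{2}^{2}=\sum_{S}\widehat{f}\left(S\right)^{2}$. Substituting both facts into the definition of variance gives
\[
{\Var}f={\E}\left[f^{2}\right]-\left({\E}f\right)^{2}=\sum_{S}\widehat{f}\left(S\right)^{2}-\widehat{f}\left(\emptyset\right)^{2}=\sum_{S\neq\emptyset}\widehat{f}\left(S\right)^{2},
\]
which is exactly the claimed identity.

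There is essentially no obstacle here: the only point requiring care is that the Fourier expansion be taken with respect to an orthonormal basis adapted to the underlying (product) measure, so that Fact \ref{fac:l2_norm-ft} is applicable; granting that, the argument is a two-line computation. Since the paper already defers the proof of Fact \ref{fac:l2_norm-ft} to the appendix, I would likewise place this short derivation there, remarking only that it follows the standard route through Parseval's identity.
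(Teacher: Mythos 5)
Your proof is correct and amounts to the same argument as the paper's: both reduce to Parseval's identity (orthonormality of the characters) together with the observation that $\widehat{f}\left(\emptyset\right)={\E}f$. The only cosmetic difference is that you obtain Parseval by specializing Fact \ref{fac:l2_norm-ft} to $g\equiv0$, whereas the paper's appendix proof re-expands ${\E}\left[f^{2}\right]$ and $\left({\E}f\right)^{2}$ directly using ${\E}\left[\chi_{S}\chi_{T}\right]$; either way the computation is the same two lines.
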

Another useful way to define the variance is the expected squared
distance between two random evaluations:
\begin{fact}
\label{fac: (z1-z2)}For any random variable X, \textup{
\[
{\Var}X=\frac{1}{2}\cdot{\E}_{x_{1},x_{2}\sim X\times X}\left(x_{1}-x_{2}\right)^{2}.
\]
}
\end{fact}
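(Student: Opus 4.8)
The plan is to reduce the identity to the bilinearity of expectation together with the independence of the two samples $x_1,x_2\sim X\times X$. First I would expand the square pointwise, $(x_1-x_2)^2 = x_1^2 - 2x_1x_2 + x_2^2$, and take the expectation over the product distribution $X\times X$ term by term. Since $x_1$ and $x_2$ are each marginally distributed as $X$, the two quadratic terms contribute ${\E}\left[x_1^2\right] = {\E}\left[x_2^2\right] = {\E}\left[X^2\right]$.

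Next I would handle the cross term: because the two samples are drawn \emph{independently}, ${\E}\left[x_1x_2\right] = {\E}\left[x_1\right]\cdot{\E}\left[x_2\right] = \left({\E}X\right)^2$. Combining the three contributions gives ${\E}_{x_1,x_2\sim X\times X}\left(x_1-x_2\right)^2 = 2{\E}\left[X^2\right] - 2\left({\E}X\right)^2 = 2\,{\Var}X$, and dividing by $2$ yields the claim. (Alternatively, one can first center, writing $X = {\E}X + (X - {\E}X)$, so that $x_1 - x_2$ equals the difference of two independent centered copies and the constant cancels; then only the $x_i^2$ terms survive and they sum to $2\,{\Var}X$.)

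There is essentially no obstacle here: the whole content is the elementary expansion above. The one place that deserves a word of care is the step ${\E}\left[x_1x_2\right] = \left({\E}X\right)^2$, which is precisely where independence of the two samples is used, and is the reason the statement is phrased with the product distribution $X\times X$ rather than a single draw. No appeal to Fact~\ref{fac: relaxed triangle ineq} or to any Fourier-analytic identity is needed, although this Fact can be seen as the two-sample counterpart of Fact~\ref{fac:var-ft}.
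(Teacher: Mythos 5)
Your proof is correct and follows exactly the same route as the paper: expand the square, use linearity of expectation and the independence of the two samples to evaluate the cross term, and recognize $2\E[X^2]-2(\E X)^2=2\,\Var X$. The centering remark and the observation that independence is the crux are accurate but add nothing beyond the paper's one-line computation.
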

We can also view the variance as the $L^{2}$-squared semidistance
from the expectation
\begin{fact}
\label{fac:var-l2_norm}
\[
{\Var}X=\left\Vert X-{\E}X\right\Vert _{2}^{2}.
\]

\end{fact}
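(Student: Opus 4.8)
The plan is simply to unwind both sides to the definition of expectation; this is the same two-line computation that shows the familiar identity $\Var X = \E[X^2] - (\E X)^2$ agrees with $\E[(X-\E X)^2]$. First I would observe that $\E X$ is a scalar, so it may be regarded as a degenerate (constant) random variable, which makes the expression $\left\Vert X - \E X\right\Vert_2^2$ well-defined; by the definition of the $L^2$-squared semi-metric in Section \ref{sub:-squared-semi-metric} it equals $\E\left[(X - \E X)^2\right]$. Then I would expand the square, $(X-\E X)^2 = X^2 - 2(\E X)\cdot X + (\E X)^2$, and apply linearity of expectation together with the fact that $\E X$ and $(\E X)^2$ are constants, obtaining $\E[X^2] - 2(\E X)^2 + (\E X)^2 = \E[X^2] - (\E X)^2$, which is $\Var X$ by definition.

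There is no real obstacle here; the only point requiring any care is the harmless abuse of notation in treating $\E X$ as a constant random variable inside $\left\Vert\cdot\right\Vert_2^2$. As an alternative derivation one could instead invoke Fact \ref{fac: (z1-z2)}: taking $x_1, x_2$ to be independent copies of $X$ and expanding $(x_1 - x_2)^2 = (x_1 - \E X)^2 + (x_2 - \E X)^2 - 2(x_1 - \E X)(x_2 - \E X)$, independence together with $\E[x_i - \E X] = 0$ makes the cross term vanish in expectation, leaving $\E(x_1-x_2)^2 = 2\left\Vert X - \E X\right\Vert_2^2$; comparing with Fact \ref{fac: (z1-z2)} yields the claim. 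I would present the direct computation as the main proof, since it is the shortest and self-contained.
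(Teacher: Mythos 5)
Your proof is correct and is essentially the same computation as the paper's: expand $(X-\E X)^2$, apply linearity of expectation, and simplify to $\E[X^2]-(\E X)^2$. The paper writes the identical chain of equalities in reverse order, so there is no substantive difference.
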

Recall also that the expectation ${\E}X$ minimizes this semi-distance
$\left\Vert X-{\E}X\right\Vert _{2}^{2}$: 
\begin{fact}
\label{fac:var-min_l2_norm}
\[
{\Var}X=\min_{E\in\mathbb{R}}\left\Vert X-E\right\Vert _{2}^{2}.
\]

\end{fact}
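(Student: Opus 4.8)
The statement I need to prove (Fact \ref{fac:var-min_l2_norm}) is that $\Var X = \min_{E \in \mathbb{R}} \|X - E\|_2^2$. This is an elementary variational characterization of the mean, and the cleanest route is to expand the objective as a quadratic polynomial in the scalar $E$ and minimize by completing the square (or by calculus). The plan is to write $g(E) := \|X-E\|_2^2 = \E[(X-E)^2]$, expand using linearity of expectation, and observe that $g$ is a convex parabola in $E$ whose unique minimizer is $E = \E X$, with minimum value exactly $\Var X$.

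\begin{proof}
For any $E \in \mathbb{R}$, by linearity of expectation,
\[
\left\Vert X - E\right\Vert_2^2 = \E\left[(X-E)^2\right] = \E\left[X^2\right] - 2E\cdot\E[X] + E^2.
\]
Viewing the right-hand side as a function of the real variable $E$, it is a quadratic with positive leading coefficient, hence convex, and its derivative $-2\E[X] + 2E$ vanishes exactly at $E = \E X$. Therefore the minimum over $E \in \mathbb{R}$ is attained at $E = \E X$, and its value is
\[
\E\left[X^2\right] - 2(\E X)(\E X) + (\E X)^2 = \E\left[X^2\right] - (\E X)^2 = \Var X,
\]
which proves the claim. (Equivalently, this follows immediately from Fact \ref{fac:var-l2_norm}, since $\left\Vert X - \E X\right\Vert_2^2 = \Var X$ is one of the values being minimized over, and the displayed identity above shows no other choice of $E$ does better.)
\end{proof}

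**Where the difficulty lies.** There is essentially no obstacle here: the only thing to be careful about is that the expansion $\E[(X-E)^2] = \E[X^2] - 2E\,\E[X] + E^2$ uses that $E$ is a constant (so $\E[E X] = E\,\E[X]$ and $\E[E^2] = E^2$), and that we are minimizing over deterministic reals rather than over random variables. If one instead wanted to minimize $\|X - Y\|_2^2$ over all random variables $Y$ on the same space, the answer would still be the constant $\E X$ by the same completion-of-the-square argument applied pointwise in the conditional sense, but that generality is not needed for the stated fact. Since Fact \ref{fac:var-l2_norm} is already available in the excerpt, one may alternatively present this as a one-line corollary, but I prefer the self-contained quadratic computation above since it simultaneously reproves Fact \ref{fac:var-l2_norm} and exhibits uniqueness of the minimizer.
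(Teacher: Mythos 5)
Your proof is correct and follows essentially the same route as the paper: treat $\left\Vert X-E\right\Vert_2^2$ as a quadratic function of the scalar $E$, minimize it by calculus/convexity, and observe the minimizer is $E=\E X$ with value $\Var X$. Your write-up is in fact a bit more complete, since you also compute the minimum value explicitly rather than only locating the critical point.
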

Finally, for any two functions $f,g$ that are closed in $L^{2}$-squared
semimetric, we can use the $2$-relaxed triangle inequality (Fact
\ref{fac: relaxed triangle ineq}) to bound the difference in variance:
\begin{fact}
\label{fac: L2 norm and var}
\[
{\Var}f\geq\frac{1}{2}{\Var}g-\left\Vert f-g\right\Vert _{2}^{2}.
\]
\end{fact}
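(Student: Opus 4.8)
The plan is to derive this purely from the facts already assembled in this subsection, with no new ideas: the $2$-relaxed triangle inequality (Fact \ref{fac: relaxed triangle ineq}) together with the two variational descriptions of the variance (Facts \ref{fac:var-l2_norm} and \ref{fac:var-min_l2_norm}). The whole point is that the variance is an $L^2$-squared distance to a constant, so it interacts with the relaxed triangle inequality in the obvious way.

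First I would rewrite the left-hand side using Fact \ref{fac:var-l2_norm}, namely ${\Var}f=\left\Vert f-{\E}f\right\Vert _{2}^{2}$, so that the inequality to be proved becomes $\left\Vert f-{\E}f\right\Vert _{2}^{2}+\left\Vert f-g\right\Vert _{2}^{2}\geq\frac{1}{2}{\Var}g$. Next I would apply the $2$-relaxed triangle inequality (Fact \ref{fac: relaxed triangle ineq}) to the triple $X=g$, $Y=f$, $Z={\E}f$, which gives
\[
\left\Vert g-f\right\Vert _{2}^{2}+\left\Vert f-{\E}f\right\Vert _{2}^{2}\geq\frac{1}{2}\left\Vert g-{\E}f\right\Vert _{2}^{2}.
\]
Finally, since ${\E}f$ is a real number, Fact \ref{fac:var-min_l2_norm} yields $\left\Vert g-{\E}f\right\Vert _{2}^{2}\geq\min_{E\in\mathbb{R}}\left\Vert g-E\right\Vert _{2}^{2}={\Var}g$, and chaining the two displays gives exactly the claim.

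There is no real obstacle here — it is a three-line manipulation — so the only thing to watch is bookkeeping: making sure the relaxed triangle inequality is invoked with the constant ${\E}f$ in the slot that then gets bounded below by ${\Var}g$ via the minimality of the mean, rather than the other way around (which would point the inequality in the wrong direction). One could equally well run the symmetric argument with the triple $g,f,{\E}g$ and Fact \ref{fac:var-l2_norm} applied to $g$, but the version above is the one that lands on the stated constants $\tfrac12$ and $1$ without any slack.
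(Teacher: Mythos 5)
Your proof is correct and is essentially the paper's own argument: both write ${\Var}f=\left\Vert f-{\E}f\right\Vert_{2}^{2}$, apply the $2$-relaxed triangle inequality to the triple $g,f,{\E}f$, and then use the minimality of the mean (Fact \ref{fac:var-min_l2_norm}) to lower-bound $\left\Vert g-{\E}f\right\Vert_{2}^{2}$ by ${\Var}g$.
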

\begin{proof}
\[
{\Var}f+\left\Vert f-g\right\Vert _{2}^{2}=\left\Vert f-{\E}f\right\Vert _{2}^{2}+\left\Vert f-g\right\Vert _{2}^{2}\geq\frac{1}{2}\left\Vert g-{\E}f\right\Vert _{2}^{2}\geq\frac{1}{2}\left\Vert g-{\E}g\right\Vert _{2}^{2}.
\]

\end{proof}

\section{Related Work\label{sec:Related-Works}}

In their seminal paper \cite{FKN}, Friedgut, Kalai, and Naor prove
that if a Boolean function is $\epsilon$-close to linear, then it
must be $\left(K\cdot\epsilon\right)$-close to a dictatorship or
a constant function.
\begin{theorem*}
\textup{(FKN Theorem \cite{FKN})} Let $f\colon\left\{ \pm1\right\} ^{n}\rightarrow\left\{ \pm1\right\} $
be a Boolean function, and suppose that $f$'s Fourier transform is
concentrated on the first two levels:
\[
\sum_{\left|S\right|\leq1}\hat{f}{}^{2}\left(S\right)\geq1-\epsilon.
\]
Then for some universal constant $K$: 
\begin{enumerate}
\item either $f$ is $\left(K\cdot\epsilon\right)$-close to a constant
function; i.e. for some $\sigma\in\left\{ \pm1\right\} $
\[
\left\Vert f-\sigma\right\Vert _{2}^{2}\leq K\cdot\epsilon;
\]

\item or $f$ is $\left(K\cdot\epsilon\right)$-close to a dictatorship;
i.e. there exists $k\in\left[n\right]$ and $\sigma\in\left\{ \pm1\right\} $
such that  $f$: 
\[
\left\Vert f-\sigma\cdot x_{k}\right\Vert _{2}^{2}\leq K\cdot\epsilon.
\]
$\square$
\end{enumerate}
\end{theorem*}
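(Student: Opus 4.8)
The plan is to decompose $f = L + h$, where $L := \hat f(\emptyset) + \sum_i \hat f(\{i\})\, x_i$ is the degree-$\le 1$ part of $f$ and $h := f^{\ge 2}$ is the higher-degree part; write $c := \hat f(\emptyset)$, $a_i := \hat f(\{i\})$ and $W := \sum_i a_i^2$. By Fact~\ref{fac:l2_norm-ft} the hypothesis is precisely that $\|f - L\|_2^2 = \|h\|_2^2 = \sum_{|S| \ge 2}\hat f(S)^2 \le \epsilon$, and we may assume $\epsilon$ lies below a suitable absolute constant, since otherwise the conclusion holds trivially (every Boolean $f$ is within squared $L^2$-distance $2$ of a constant function, so a large enough $K$ handles that range). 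First I would observe that $L$ is almost $\{\pm1\}$-valued: as $f \in \{\pm1\}$, whenever $f(x) \ne \operatorname{sgn}(L(x))$ we have $|h(x)| = |f(x) - L(x)| \ge 1$, so Markov gives $\Pr[f \ne \operatorname{sgn}(L)] \le \|h\|_2^2 \le \epsilon$, hence $\|f - \operatorname{sgn}(L)\|_2^2 \le 4\epsilon$, and then the $2$-relaxed triangle inequality (Fact~\ref{fac: relaxed triangle ineq}) yields
\[
\E\!\left[\bigl(|L|-1\bigr)^2\right] \;=\; \bigl\|\operatorname{sgn}(L)-L\bigr\|_2^2 \;\le\; 2\left(\bigl\|\operatorname{sgn}(L)-f\bigr\|_2^2 + \|f-L\|_2^2\right) \;\le\; 10\epsilon .
\]

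The key step I would then carry out is a discrete-derivative argument. For each coordinate $i$, the derivative $D_i f := \tfrac12\bigl(f|_{x_i=+1} - f|_{x_i=-1}\bigr)$ is a function of the other variables taking values in $\{-1,0,+1\}$, with $\E[D_i f] = a_i$ and $\Var[D_i f] = \sum_{S \ni i,\, |S| \ge 2}\hat f(S)^2 \le \epsilon$. Averaging the pointwise inequality $(D_i f - a_i)^2 \ge \operatorname{dist}(a_i, \{-1,0,1\})^2$ — valid because $D_i f$ is $\{-1,0,1\}$-valued — gives $\operatorname{dist}(a_i, \{-1,0,1\})^2 \le \Var[D_i f] \le \epsilon$; thus every $|a_i|$ is within $\sqrt\epsilon$ of $0$ or of $1$, and since $W = \sum_i a_i^2 \le 1$ at most one of them, say $a_k$, can be within $\sqrt\epsilon$ of $1$. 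If such an $a_k$ exists, put $\sigma := \operatorname{sgn}(a_k)$; every value of $D_k f$ other than $\sigma$ lies at distance $\ge 1 - \sqrt\epsilon$ from $a_k$, so Markov again gives $\Pr[D_k f \ne \sigma] \le \Var[D_k f]/(1-\sqrt\epsilon)^2 = O(\epsilon)$, and on the event $D_k f(x_{-k}) = \sigma$ one has $f(x) = \sigma x_k$ for \emph{both} settings of $x_k$. Hence $\|f - \sigma x_k\|_2^2 = 4\Pr[f \ne \sigma x_k] = O(\epsilon)$, which is the dictatorship alternative — obtained with no use of hypercontractivity or moment estimates, and fixing a universal $K$ for this branch.

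The remaining case, all $|a_i| \le \sqrt\epsilon$, is where I expect the real difficulty. Here the goal is to show $\Var f = W + \|h\|_2^2 = O(\epsilon)$, which (using $\Var f = 1 - (\E f)^2$ since $f^2 \equiv 1$, together with Fact~\ref{fac:var-l2_norm}) then places $f$ within $O(\epsilon)$ of the constant $\operatorname{sgn}(\E f)$. The obstruction is that $\ell := \sum_i a_i x_i$ has all coefficients tiny and total variance $W$, yet $L = c + \ell$ must be $L^2$-close to the $\{\pm1\}$-valued $f$; intuitively a sum of many tiny independent steps is spread out like a Gaussian of variance $W$, which cannot be close to a $\{\pm1\}$-valued random variable unless $W$ is tiny. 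One elementary route to formalize this is to apply the bound of the first step to both $L(x)$ and $L(-x) = c - \ell$ and use the identity $\bigl||c+\ell| - |c-\ell|\bigr| = 2\min(|c|,|\ell|)$ to deduce $\E[\min(|c|,|\ell|)^2] = O(\epsilon)$, while the fourth-moment bound $\E[\ell^4] = 3W^2 - 2\sum_i a_i^4 \le 3W^2$ and Paley--Zygmund force $|\ell| \gtrsim \sqrt W$ on a constant fraction of inputs; combining these (with a short case split on the size of $|c|$, plus some bootstrapping) should pin $W = O(\epsilon)$. I flag this step because the naive shortcut one might try first — expanding $L^2 - 1 = -2fh + h^2$ in Fourier and reading off that its level-$2$ mass $\sum_{i \ne j}a_i^2 a_j^2$ is $O(\epsilon)$ — stalls here, since there is no pointwise control on $h$ or on $\ell$ and the error terms $\E[h^4]$, $\E[fh^3]$ are not visibly $O(\epsilon)$; this is the exact point at which one is normally tempted to invoke hypercontractivity, and pushing the anti-concentration estimate through by hand is the technical heart of the proof.
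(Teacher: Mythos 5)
Your first two steps are sound and would survive scrutiny: the bound $\E[(|L|-1)^2]\le 10\epsilon$, the observation that each $D_i f$ is $\{-1,0,1\}$-valued with variance at most $\epsilon$, and the resulting conclusion that at most one $a_k$ can be $\sqrt\epsilon$-close to $\pm1$ (in which case the dictatorship branch follows from $\Pr[D_kf\ne\sigma]=O(\epsilon)$) are all correct and genuinely elementary. The gap is in your third step, and it is not a loose end to be tidied up --- it is the entire content of the theorem. Your $\min(|c|,|\ell|)$ identity plus Paley--Zygmund only yields $\min(|c|^2,W)=O(\epsilon)$. In the subcase $|c|^2=O(\epsilon)$ you are left with $W\approx 1$, all $|a_i|\le\sqrt\epsilon$, and $\E[(|\ell|-1)^2]=O(\epsilon)$, which is precisely the FKN Proposition that you need to refute. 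The tools you list cannot do it: the information you actually hold --- $\E[\ell^2]=W$, $\E[\ell^4]=3W^2-2\sum a_i^4$, and $\E[(|\ell|-1)^2]=O(\epsilon)$ --- is consistent with a random variable sitting at $\pm1$ except for a tail of mass $\Theta(\epsilon^2)$ at $\pm\Theta(\epsilon^{-1/2})$, so no contradiction can be extracted from second and fourth moments together with Paley--Zygmund. Closing this requires either controlling higher norms of the degree-two part $\ell^2-W$ (this is exactly the point where \cite{FKN} invoke hypercontractivity, to pass from $\|\ell^2-W\|_2$ to $\|\ell^2-W\|_1$) or a structurally different anti-concentration argument; you correctly identify the danger point but do not supply the missing argument.

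For comparison, the paper does not attack this statement head-on. It first reduces the general case to balanced $f$ via Kindler's symmetrization $g(x;x_{n+1})=x_{n+1}f(x_{n+1}x_1,\dots,x_{n+1}x_n)$, which preserves the level-$\le1$ weight and folds your two branches (constant versus dictator) into a single dictatorship statement for $g$; it then applies Corollary \ref{fkn-like} with the singleton partition, where ${\Var}f=1$ makes the variance factor harmless. The anti-concentration work your sketch leaves open is carried out in Theorem \ref{sum of sequence} via Lemma \ref{lemma: Var|X+Y+c| > K * VarX * VarY}: the sum is split into two independent halves of comparable variance, and one shows that either some half already has $\Var|\cdot|$ large (so Lemma \ref{lemma: var|X+Y+E| > var|X+E|} transfers this to the sum), or both halves are $L^2$-close to constant-absolute-value variables, for which the required lower bound on ${\Var}|X'+Y'-E|$ is verified by direct case analysis. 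An argument of this kind (or the one of Jendrej--Oleszkiewicz--Wojtaszczyk) is what must replace your third step before the proof is complete.
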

The FKN Theorem quickly found applications in social choice theory
\cite{Kalai_social}. More importantly, it has since been applied
in other fields; a good example is Dinur's combinatorial proof of
the PCP theorem \cite{FKN_app_dinur_PCP}.

There are also many works on generalizations on the FKN Theorem. Alon
et al. \cite{FKN_for_Z_r_1} and Ghandehari and Hatami \cite{FKN_for_Z_r_2}
prove generalizations for functions with domain $\mathbb{Z}_{r}^{n}$
for $r\geq2$. Friedgut \cite{FKN_for_any_deg_and_non-uniform_measure}
proves a similar theorem that also holds for Boolean functions of
higher degrees and over non-uniform distributions; however, this theorem
requires bounds on the expectation of the Boolean function. In \cite{FKN_quantum},
Montanaro and Osborne prove quantum variants of the FKN Theorem for
any ``quantum Boolean functions'', i.e. any unitary operator $f$
such that $f^{2}$ is the identity operator. Falik and Friedgut \cite{FKN_for_representation_theory}
and Ellis et al. \cite{EFF-FKN_balanced_Sn,EFF-FKN_Sn} prove representation-theory
variants of the FKN Theorem, for functions which are close to a linear
combination of an irreducible representation of elements of the symmetric
group.

The FKN Theorem is an easy corollary once the following proposition
is proven: If the \emph{absolute value} of the linear combination
of Boolean variables has a small variance, then it must be concentrated
on a single variable. Formally,
\begin{proposition*}
\textup{(FKN Proposition \cite{FKN})} Let $\left(X_{i}\right)_{i=1}^{n}$
be a sequence of independent symmetric variables with supports $\left\{ \pm a_{i}\right\} $
such that $\sum a_{i}^{2}=1$. For some universal constant $K$, if
\[
{\Var}\left|\sum_{i}X_{i}\right|\leq\epsilon,
\]
 then for some $k\in\left[n\right]$
\[
a_{k}>1-K\cdot\epsilon.
\]
$\square$
\end{proposition*}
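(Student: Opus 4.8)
The plan is to work with $S:=\sum_i X_i=\sum_i a_i\sigma_i$, where the $\sigma_i\in\{\pm1\}$ are i.i.d.\ uniform, using only elementary moment estimates for Rademacher sums: $\E S^2=\sum_i a_i^2=1$, $\E S^4=3-2\sum_i a_i^4$ (so $\E S^4\le3$), and $\E S^6\le15$. The hypothesis reads $\Var|S|=1-(\E|S|)^2=:\epsilon'\le\epsilon$, and, writing $a_1:=\max_i a_i$, it suffices to prove $1-a_1^2\le K'\epsilon$, since then $1-a_1\le1-a_1^2\le K'\epsilon$. First I would peel off the largest coordinate: $S=a_1\sigma_1+T$ with $T:=\sum_{i\ge2}a_i\sigma_i$ independent of $\sigma_1$. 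Conditioned on $T$, the variable $|S|$ equals $|T+a_1|$ or $|T-a_1|$ with equal probability, so $\E[\,|S|\mid T\,]=\tfrac12(|T+a_1|+|T-a_1|)=\max(|T|,a_1)$ and $\E[S^2\mid T]=T^2+a_1^2$, whence $\Var(|S|\mid T)=\min(T^2,a_1^2)$. By the law of total variance this gives the key inequality $\epsilon'\ge\E[\min(T^2,a_1^2)]$, and I also record $\E T^2=1-a_1^2$ and $\E T^4\le3(1-a_1^2)^2$.

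I would then split into two regimes. In the \emph{dominant-coordinate} regime $a_1^2\ge 6/7$, put $t=1-a_1^2\le 1/7$; then $a_1^2\ge 6t$, so $\min(T^2,a_1^2)\ge\min(T^2,6t)$, and using $\min(T^2,6t)=T^2-\max(T^2-6t,0)$ with $\max(T^2-6t,0)\le T^4/(6t)$ we get $\E[\min(T^2,6t)]\ge t-\E T^4/(6t)\ge t-3t^2/(6t)=t/2$. Combined with the key inequality, $1-a_1^2=t\le 2\epsilon'\le 2\epsilon$, which is the desired conclusion with $K'=2$.

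The remaining \emph{spread} regime $a_1^2<6/7$ is the heart of the argument: here $1-a_1^2>1/7$ is bounded away from $0$, so it suffices to prove an anti-concentration bound $\epsilon'=\Var|S|\ge\epsilon_0$ for some universal $\epsilon_0>0$ (then $1-a_1^2\le1\le\epsilon/\epsilon_0$). Since $\sum_i a_i^4\le\max_i a_i^2<6/7$, we have $\Var(S^2)=\E S^4-1=2-2\sum_i a_i^4>2/7$, so $S^2$ is genuinely non-concentrated; the task is to transfer this to $|S|=\sqrt{S^2}$. I would factor $S^2-1=(|S|-1)(|S|+1)$ and apply Cauchy--Schwarz: $\Var(S^2)=\E[(S^2-1)^2]=\E[(|S|-1)^2(|S|+1)^2]\le(\E(|S|-1)^4)^{1/2}(\E(|S|+1)^4)^{1/2}$. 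Here $\E(|S|+1)^4\le 2^3(\E S^4+1)\le32$, while $\E(|S|-1)^2=2-2\E|S|=2-2\sqrt{1-\epsilon'}\le2\epsilon'$ and $\E(|S|-1)^6\le2^5(\E S^6+1)\le512$, so another application of Cauchy--Schwarz gives $\E(|S|-1)^4\le(\E(|S|-1)^2\cdot\E(|S|-1)^6)^{1/2}\le(1024\,\epsilon')^{1/2}$, hence $\Var(S^2)\le 32\,(\epsilon')^{1/4}$. Together with $\Var(S^2)>2/7$ this forces $\epsilon'\ge\epsilon_0$ with, say, $\epsilon_0=(1/112)^4$. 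Taking $K=\max(2,1/\epsilon_0)$ (and $K'$ as above) completes the proof.

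The step I expect to be the main obstacle is the spread regime. The crude tail/truncation estimates that finish the dominant-coordinate case break down when some $a_1^2$ is only ``moderate'' (e.g.\ near $1/2$), because then $|T|$ overshoots $a_1$ with non-negligible probability and the truncation at $a_1^2$ in the conditioning bound loses a constant factor; the escape is to pass to $S^2$, whose second moment $\E S^4-1=2-2\sum_i a_i^4$ is a transparent function of the weights so that non-concentration of $S^2$ is immediate, and then to convert it back to non-concentration of $|S|$ using only a single bounded higher moment of $|S|$ via the factorization $S^2-1=(|S|-1)(|S|+1)$.
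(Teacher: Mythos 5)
Your proof is correct; I checked the key identities and they hold: $\E[\,|S|\mid T\,]=\max(|T|,a_1)$ and hence $\Var(|S|\mid T)=\min(T^2,a_1^2)$, the exact moments $\E S^4=3-2\sum_i a_i^4$ and $\E S^6\le 15$, the truncation bound $\max(T^2-6t,0)\le T^4/(6t)$, and the Cauchy--Schwarz chain giving $\Var(S^2)\le 32(\epsilon')^{1/4}$, which against $\Var(S^2)>2/7$ yields the universal lower bound in the spread regime. (Two trivial loose ends: handle $t=0$ separately to avoid dividing by $6t$, and inflate the constant slightly to get the strict inequality $a_k>1-K\epsilon$.)

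Your route is genuinely different from the paper's. The paper never proves the FKN Proposition directly -- it obtains it (and much more) as the special case $E=0$, $V=1$ of Theorem \ref{sum of sequence}, whose proof splits the sequence into two groups of comparable variance and reduces to the two-variable Lemma \ref{lemma: Var|X+Y+c| > K * VarX * VarY}; that lemma in turn is proved by a dichotomy on $\Var|\overline{X}+E|$ versus $\Var\overline{X}$, using a Krein--Milman reduction to two-point $\overline{Y}$ (Claim \ref{cla:For-every x,x,y,y}) in one case and constant-absolute-value approximations with case analysis (Claim \ref{cla: const  | |}) in the other. You instead exploit the special structure of symmetric two-point variables: peeling off the largest weight and using the exact conditional identity plus the law of total variance gives $\Var|S|\ge\E[\min(T^2,a_1^2)]$ at once, and the ``spread'' case is dispatched by transferring non-concentration from $S^2$ (whose variance is an explicit function of the weights) back to $|S|$ via bounded fourth and sixth moments. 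What each approach buys: yours is shorter and purely moment-based, but it leans on symmetry and on the two-point supports (both the identity $\E[\,|S|\mid T\,]=\max(|T|,a_1)$ and the clean formula for $\E S^4$ break for general independent variables), so it does not extend to the paper's setting of arbitrary, non-symmetric, non-identically-distributed summands with the $V/(V+E^2)$ dependence; the paper's longer case analysis is exactly what secures that generality. Your constant (about $112^4$) is worse than the $K_2\le 61440$ one gets by specializing the paper's theorem, but both are universal, and your moment bounds ($\E S^6\le 15$ by termwise comparison with the Gaussian) remain elementary in the paper's hypercontractivity-free spirit.
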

Intuitively, this proposition says that if the variance was spread
among many of the variables, i.e. the ``weights'' $\left(a_{i}\right)$
were somewhat evenly distributed, then one would expect the sum of
such independent variables to be closer to a Gaussian rather than
a bimodal distribution around $\pm1$.

This proposition has been generalized in several ways in a sequence
of recent works by Wojtaszczyk \cite{Sums_of_ind_symmetric_variables}
and Jendrej, Oleszkiewicz, and Wojtaszczyk \cite{Sums_of_iid_variables},
which are of particular interest to us. Jendrej et al. prove extensions
of the FKN Proposition to the following cases:
\begin{enumerate}
\item The case where $X_{i}$'s are independent symmetric

\begin{theorem*}
\textup{(\cite{Sums_of_iid_variables})} Let $\left(X_{i}\right)_{i=1}^{n}$
be a sequence of independent \textbf{symmetric} variables. Then there
exists an universal constant $K$, such that for some $k\in\left[n\right]$
\[
\inf_{E\in\mathbb{R}}{\Var}\left|\sum_{i}X_{i}+E\right|\geq\frac{{\Var}\sum_{i\neq k}X_{i}}{K}.
\]
$\square$
\end{theorem*}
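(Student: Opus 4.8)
The plan is to reduce, by conditioning, to the already-known FKN Proposition for $\pm$-valued weights, and to account separately for the ``heavy-tailed'' contribution that this reduction discards. Throughout, normalise so that $\sum_i\Var X_i=1$ (both sides of the claimed inequality are multiplied by $c^{2}$ under $X_i\mapsto cX_i$, and the index realising $\max_i\Var X_i$ is unchanged); write $S:=\sum_i X_i$, and let $k$ be an index with $\Var X_k=\max_i\Var X_i=:\sigma^{2}$, so the goal is $\inf_{E}\Var|S+E|\ge\frac1K(1-\sigma^{2})$ (note $1-\sigma^{2}=\sum_{i\neq k}\Var X_i=\Var\sum_{i\neq k}X_i$ by independence). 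Since each $X_i$ is symmetric, $\E S=0$ and $\E(S+E)^{2}=1+E^{2}$, hence $\Var|S+E|=1+E^{2}-(\E|S+E|)^{2}\in[0,1]$; in particular it suffices to establish the bound for every fixed $E$ separately.

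\emph{Step 1 (reduction to FKN).} Write $X_i=\mathrm{sgn}(X_i)\cdot|X_i|$. Conditioned on the tuple of absolute values $(|X_i|)_i=(a_i)_i$, the signs of the nonzero coordinates are independent and uniform in $\{\pm1\}$, so $\sum_i\mathrm{sgn}(X_i)|X_i|+E$ is a shifted $\pm$-weighted sum to which the non-balanced (shifted) form of the FKN Proposition applies. By the law of total variance and that Proposition (after rescaling $(a_i)$ to unit $\ell^{2}$-norm),
\[
\Var|S+E|\ \ge\ \E_{(|X_i|)}\!\Bigl[\Var_{\mathrm{sgn}}\bigl|\textstyle\sum_i\mathrm{sgn}(X_i)|X_i|+E\bigr|\Bigr]\ \ge\ \frac{1}{K_{0}}\,\E\!\Bigl[\textstyle\sum_i X_i^{2}-\max_i X_i^{2}\Bigr]\ =\ \frac{1}{K_{0}}\bigl(1-\E\max_i X_i^{2}\bigr),
\]
using $\E X_i^{2}=\Var X_i$ and $\sum_i\Var X_i=1$. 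When $\E\max_i X_i^{2}$ does not much exceed $\sigma^{2}$ this is already $\gtrsim 1-\sigma^{2}$, and we are done.

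\emph{Step 2 (the heavy-tailed case).} Suppose instead $\E\max_i X_i^{2}$ substantially exceeds $\sigma^{2}=\max_i\E X_i^{2}$. Since $\max_i X_i^{2}-\sigma^{2}\le\sum_{i\neq k}X_i^{2}$ in expectation but is on average much larger than the individual variances account for, some single coordinate must occasionally take a value far exceeding, in absolute value, both the remaining coordinates combined and $|E|$. Concretely, I would let $i^{\star}$ index the largest $|X_i|$ and consider the event $G=\bigl\{\,|X_{i^{\star}}|\ge 2\bigl|\sum_{j\neq i^{\star}}X_j+E\bigr|\,\bigr\}$, chosen to carry the bulk of $\max_i X_i^{2}-\sigma^{2}$; on $G$ one has $|S+E|\ge\tfrac12|X_{i^{\star}}|$, which is large, whereas on the complement $|S+E|$ is of bounded order. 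Comparing the conditional means of $|S+E|$ given $G$ and given $G^{c}$ and applying the law of total variance to the indicator of $G$ should yield $\Var|S+E|\ \ge\ c_{0}\bigl(\E\max_i X_i^{2}-\sigma^{2}\bigr)$ for a universal $c_{0}>0$. The delicate point is that $E$ could be tuned to cancel the large value of $X_{i^{\star}}$; but the symmetry of $X_{i^{\star}}$ means its $\pm$-values cannot both be cancelled by the same $E$, so one branch always stays large and the bound holds uniformly in $E$.

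Writing $A:=1-\E\max_i X_i^{2}\in[0,1-\sigma^{2}]$, Steps~1 and~2 give $\Var|S+E|\ge\frac1{K_{0}}A$ and $\Var|S+E|\ge c_{0}(1-\sigma^{2}-A)$; averaging these, $\Var|S+E|\ge\frac12\min(\frac1{K_{0}},c_{0})\,(1-\sigma^{2})$ for every $E$, which is the theorem with $K=2/\min(1/K_{0},c_{0})$. The main obstacle is Step~2: converting an upper bound on $\E\max_i X_i^{2}-\max_i\E X_i^{2}$ into a constant-probability occurrence of one dominant coordinate, uniformly over the shift $E$, and then into variance of $|S+E|$ --- precisely the elementary but lengthy case analysis the paper defers to its technical section. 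A secondary, routine point is that Step~1 uses the shifted form of the classical FKN Proposition; if one prefers not to cite it, it must be re-derived, which is again elementary.
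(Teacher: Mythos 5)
This theorem is not proved in the paper at all: it appears in the Related Work section as a quotation from Jendrej, Oleszkiewicz and Wojtaszczyk, so there is no ``paper's own proof'' to compare against. The paper's main technical statement (Theorem \ref{sum of sequence}) is related but incomparable: it drops the symmetry assumption, but in exchange it bounds only ${\Var}\left|\sum_i X_i\right|$ --- with no infimum over shifts --- and its right-hand side carries the factor $V/\left(V+E^2\right)$, which degrades as the mean grows; applied to $\sum_i X_i + E$ it does not recover a bound uniform in $E$, so it does not specialize to the JOW statement.

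Your route --- condition on $\left(\left|X_i\right|\right)$ to expose a $\pm$-weighted sum with fresh fair signs, apply the (shifted) FKN Proposition pointwise via the law of total variance, and treat separately the regime where $\E\max_i X_i^2$ substantially exceeds $\sigma^2 = \max_i {\Var} X_i$ --- is a genuinely different decomposition from the one the paper uses for its own theorem (a two-variable reduction, Krein--Milman to collapse one variable to a two-point support, and then explicit four-point case analysis). Your Step~1 is essentially correct, modulo the two routine points you flag yourself: you need FKN in its shifted form (the conditional sum has a nonzero level-$0$ part), and you need the contrapositive lower-bound reformulation ${\Var}\left|\sum a_i\sigma_i + E\right| \gtrsim \sum_{i\neq k} a_i^2$ after $\ell^2$-rescaling, which does follow.

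The genuine gap is Step~2. You state as a target the inequality ${\Var}\left|S+E\right| \geq c_0\left(\E\max_i X_i^2 - \sigma^2\right)$, uniformly in $E$, but what is offered for it is a heuristic, not an argument. Concretely: the index $i^\star$ is random (argmax of $\left|X_i\right|$), so $G$ is defined relative to a random coordinate and $\Pr\left[G\right]$ is never estimated; the claim that $\left|S+E\right|$ is ``of bounded order'' on $G^c$ is unjustified, since off $G$ one only knows $\left|X_{i^\star}\right| < 2\left|\sum_{j\neq i^\star} X_j + E\right|$, which gives no upper bound at all; and the assertion that symmetry of $X_{i^\star}$ prevents $E$ from cancelling ``both branches'' has not been turned into a variance bound --- you would need to condition on all magnitudes, use that the sign of the (now fixed) maximizer is a fresh coin, show that the two resulting values of $\left|S+E\right|$ differ by $\Omega\left(\left|X_{i^\star}\right|\right)$ on an event of constant conditional probability, and only then integrate out. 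This is precisely where the substance of the JOW argument lies; until it is carried out, the proposal is an outline rather than a proof.
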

\item The case where all the $X_{i}$'s are identically distributed

\begin{theorem*}
\textup{(\cite{Sums_of_iid_variables})} Let $\left(X_{i}\right)_{i=1}^{n}$
be a sequence of \textbf{i.i.d.} variables which are not constant
a.s.. Then there exists a $K_{X}$, which depends only on the distribution
from which the $X_{i}$'s are drawn, such that for any sequence of
real numbers $\left(a_{i}\right)_{i=1}^{n}$, for some $k\in\left[n\right]$
\[
\inf_{E\in\mathbb{R}}{\Var}\left|E+\sum_{i}a_{i}X_{i}\right|\geq\frac{\sum_{i\neq k}a_{i}^{2}}{K_{X}}.
\]
$\square$
\end{theorem*}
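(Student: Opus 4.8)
The plan is to normalise, then split according to how spread the weight vector is. Since $\inf_{E}\Var\bigl|E+\sum_i a_iX_i\bigr|$ is unchanged under replacing each $X_i$ by $X_i-\E X_i$ (the constant $(\sum_i a_i)\E X_1$ is absorbed into $E$), I may assume $\E X_i=0$; put $\sigma^2:=\Var X_i>0$ and $\mu_4:=\E X_i^4\ge\sigma^4$. Rescaling, assume $\sum_i a_i^2=1$, so $S:=\sum_i a_iX_i$ has $\Var S=\sigma^2$; set $\epsilon:=\inf_E\Var|E+S|$, with $E_0$ near-optimal, and reorder so that $a_1^2=\max_i a_i^2$. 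The goal is $1-a_1^2\le K_X\,\epsilon$, and since $1-a_1^2\le1$ it suffices to treat $\epsilon$ below a constant that may depend on the law of $X$. Two remarks. (i) If the $X_i$ are symmetric then the $a_iX_i$ are symmetric, so the statement is exactly the symmetric theorem of \cite{Sums_of_iid_variables} quoted above; thus the content is the non-symmetric case. (ii) A direct expansion using $\E X_i=0$ (only fully paired monomials survive) gives $\E S^4=3\sigma^4+(\mu_4-3\sigma^4)\sum_i a_i^4$; in particular $\E S^4\le\Lambda_X:=3\sigma^4+|\mu_4-3\sigma^4|$, and more generally any weighted sub-sum $T=\sum_{i\in B}a_iX_i$ satisfies $\E T^4\le\Lambda_X\bigl(\sum_{i\in B}a_i^2\bigr)^2$, i.e.\ its fourth moment is controlled by the square of its variance.

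First I would treat the \emph{near-dictator} regime $1-a_1^2\le\delta_X$, where $\delta_X>0$ is a small constant chosen below. Write $S=a_1X_1+R$ with $R:=\sum_{i\ge2}a_iX_i$ independent of $X_1$, $\Var R=\sigma^2(1-a_1^2)$, and (by remark (ii)) $\E R^4\le\Lambda_X(1-a_1^2)^2$; let $R'$ be an independent copy of $R$. The elementary resampling inequality $\E[(g-g')^2]\le2\Var g$ — valid whenever $g'$ is $g$ with one independent block resampled — applied with $g=|E_0+S|$ and the block $\{2,\dots,n\}$ gives $\E\bigl[(\,|E_0+a_1X_1+R|-|E_0+a_1X_1+R'|\,)^2\bigr]\le2\epsilon$. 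On the event $A:=\{|E_0+a_1X_1|>|R|\}\cap\{|E_0+a_1X_1|>|R'|\}$ the two absolute values carry the common sign of $E_0+a_1X_1$, so the bracket equals $(R-R')^2$; hence $\E[(R-R')^2\mathbf 1_A]\le2\epsilon$. Since $c:=\E|E_0+S|$ obeys $c^2=\sigma^2+E_0^2-\epsilon\ge\sigma^2-\epsilon$, Chebyshev shows $|E_0+a_1X_1|>\sigma/2$ off an event of probability $O(\epsilon/\sigma^2)+O(\delta_X)$; using this together with $\Var R=\sigma^2(1-a_1^2)$ and the fourth-moment regularity of $R$ (which makes its tails negligible), a short computation gives $\E[(R-R')^2\mathbf 1_{A^c}]\le\tfrac14\E(R-R')^2$ once $\delta_X$ and $\epsilon/\sigma^2$ are small enough. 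Then $\E[(R-R')^2\mathbf 1_A]\ge\tfrac34\E(R-R')^2=\tfrac32\sigma^2(1-a_1^2)$, and comparison with the resampling bound yields $1-a_1^2\le\tfrac{4\epsilon}{3\sigma^2}$.

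It remains to handle the \emph{spread} regime $a_1^2<1-\delta_X$, where the target reduces to a uniform lower bound $\epsilon\ge c_X>0$ (then $1-a_1^2\le1\le\epsilon/c_X$). Using $\max_i a_i^2<1-\delta_X$, one can greedily form disjoint blocks $I,J\subseteq[n]$ with $\sum_{i\in I}a_i^2,\ \sum_{i\in J}a_i^2\ge\delta_X/2$; put $U:=\sum_{i\in I}a_iX_i$, $V:=\sum_{i\in J}a_iX_i$, $S_K:=\sum_{i\notin I\cup J}a_iX_i$, so $U,V,S_K$ are independent, $\E U=\E V=0$, and $\Var U,\Var V\ge(\delta_X/2)\sigma^2=:v_0$. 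By the law of total variance, conditioning on $S_K$,
\[
\epsilon=\Var|E_0+S|\ \ge\ \E_{S_K}\bigl[\Var(|E_0+S|\mid S_K)\bigr]\ \ge\ \inf_{t\in\mathbb R}\Var|t+U+V| .
\]
So everything reduces to the following \emph{anti-concentration of a folded convolution}: for independent $U,V$, each a weighted sum of the $X_i$ with variance $\ge v_0$ — and hence, by remark (ii), with fourth moment at most $C_X$ times the square of its variance — one has $\inf_t\Var|t+U+V|\ge c_X>0$.

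This last step I expect to be the main obstacle. For $|t|$ large, $t+U+V$ is a.s.\ of one sign, so $\Var|t+U+V|$ is close to $\Var(U+V)\ge2v_0$, the closeness being quantitatively controlled by the fourth-moment bound on $U+V$; hence only bounded $t$ is at issue. There, the natural line is: if $\Var|t+U+V|$ were tiny then $t+U+V$ would be nearly bimodal on a symmetric pair $\{\pm c\}$ with $c$ bounded — equivalently $(t+U+V)^2$ would be nearly constant, a passage now legitimate since $t$ is bounded and the fourth moments are controlled — but a convolution $U+V$ of two independent non-degenerate laws cannot be nearly bimodal unless each of $U,V$ is nearly supported on an arithmetic progression, which, because $U$ and $V$ are built from i.i.d.\ copies of $X$, forces the law of $X$ itself to be close to a rescaled two-point law. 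One therefore branches on the distribution of $X$: if $X$ is far (quantitatively) from every rescaled two-point distribution, a characteristic-function argument — $|\widehat U|\cdot|\widehat V|$ would have to nearly attain its maximum $1$ at a nonzero frequency, forcing the symmetric variable $X-X'$ to be nearly degenerate — rules this out and produces $c_X$; if $X$ is close to a rescaled two-point law, the bound follows by perturbing the classical FKN Proposition for $\pm1$-variables quoted above (noting that $\mu_4=\sigma^4$ characterises exactly the scaled Rademacher laws, to which that proposition applies verbatim). It is precisely this branch on the law of $X$ that makes $K_X$ necessarily depend on the full distribution of $X$ rather than on $\sigma^2$ alone, in agreement with the tightness asserted in Lemma~\ref{tightness of main result-1}.
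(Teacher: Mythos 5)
First, a point of reference: the paper does not prove this statement --- it is quoted from Jendrej--Oleszkiewicz--Wojtaszczyk as related work --- but the paper's own Theorem \ref{sum of sequence} and, above all, Lemma \ref{lemma: Var|X+Y+c| > K * VarX * VarY} contain exactly the ingredient your argument ends up needing. Much of what you do is sound and attractive. The normalization and the fourth-moment bookkeeping ($\E S^4=3\sigma^4+(\mu_4-3\sigma^4)\sum a_i^4$, hence $\E T^4\le\Lambda_X(\sum_{i\in B}a_i^2)^2$ for any block) are correct. The near-dictator regime is handled by a genuinely nice argument: the resampling inequality $\E[(g-g')^2]\le2{\Var}g$, the unfolding event $A$ on which the bracket equals $(R-R')^2$, and Cauchy--Schwarz with $\E R^4\le\Lambda_X(1-a_1^2)^2$ to discard $A^c$ do combine, after the Chebyshev estimates you indicate, to give $1-a_1^2=O(\epsilon/\sigma^2)$. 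The spread regime is also correctly reduced, via the greedy block decomposition and the law of total variance, to the single inequality $\inf_t{\Var}\left|t+U+V\right|\ge c_X$ for two independent centered blocks of variance at least $v_0$.

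The gap is that this last inequality is not a technical loose end --- it is the entire content of the theorem --- and your treatment of it is a sketch of a sketch. Concretely: (i) the passage from ``${\Var}\left|t+U+V\right|$ tiny'' to ``each of $U,V$ is nearly supported on an arithmetic progression'' is asserted with no quantitative mechanism; (ii) in the branch where $X$ is far from every two-point law, near-maximality of $|\widehat{U}(s)|\,|\widehat{V}(s)|$ at a nonzero frequency only forces $X-X'$ to concentrate near a lattice, not to be ``nearly degenerate'' --- and lattice concentration is perfectly consistent with $X$ being two-point, so this branch does not close; (iii) in the other branch, ``perturbing the classical FKN Proposition'' is not carried out, and that proposition as quoted applies only to symmetric supports $\{\pm a_i\}$, a hypothesis you do not verify for the perturbed variables. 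By contrast, the paper proves precisely this folded two-variable anti-concentration bound --- Lemma \ref{lemma: Var|X+Y+c| > K * VarX * VarY}, with the mean of the sum playing the role of your shift $t$ --- with a \emph{universal} constant and by elementary means: Krein--Milman reduces one variable to a balanced two-point law, a four-point case analysis handles the case where some $\left|\,\cdot\,\right|$ already has large variance, and a constant-absolute-value approximation plus case analysis handles the rest. Substituting that lemma for your final step would complete your proof, and it also shows that the dependence of $K_X$ on the law of $X$ is not forced by the anti-concentration step at all; it enters only through normalizing ${\Var}X$ against $\sum a_i^2$ and through controlling large shifts $E$.
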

\end{enumerate}

\subsubsection*{Concurrent Progress by Jendrej, Oleszkiewicz, and Wojtaszczyk}

Let us note that the works of Wojtaszczyk \cite{Sums_of_ind_symmetric_variables}
and Jendrej, Oleszkiewicz, and Wojtaszczyk \cite{Sums_of_iid_variables}
have been eventually extended and transformed into \cite{Krzysztof_article},
which is conditionally accepted \textit{Theory of Computing}.

The extension \cite{Krzysztof_article} --carried out independently
of our work-- has resulted in a theorem which is our Theorem \ref{sum of sequence},
however for the case of bounded-variance variables.

Furthermore, it is worthwhile noting that the proof there could be
amended so as to achieve Theorem \ref{sum of sequence}, and can thus
be considered as an alternative technique for such purposes.

Following the work of Jendrej et al. and the announcement of our results,
Nayar \cite{Nayar13_biased-FKN} proved a variant of the FKN Theorem
for the biased hypercube, which builds on ideas from \cite{Krzysztof_article}.

\section{Formal Statement of Our Results\label{sec:Our-Main-Results}}

In this work we consider the following relaxation of linearity in
the premise of the FKN Theorem: Given a partition of the variables
$\left\{ I_{j}\right\} $ and a function $f_{j}$ (not necessarily
Boolean or symmetric) on the variables in each subset, we look at
the premise that the Boolean function $f$ is close to a linear combination
of the $f_{j}$'s. Our main result (Corollary \ref{fkn-like}) states,
loosely, that any such $f$ must be close to being completely dictated
by its restriction $f_{k}$ to the variables in a single subset $I_{k}$
of the partition. 

While making a natural generalization of the well-known FKN Theorem,
our work also has a surprising side: In the FKN Theorem and similar
results, if a function is $\epsilon$-close to linear then it is $\left(K\cdot\epsilon\right)$-close
to a dictatorship, for some constant $K$. We prove that while this
is true in the partition case for balanced functions, it does not
hold in general. In particular, we require $f$ to be $\left(\epsilon\cdot{\Var}f\right)$-close
to linear in the $f_{j}$'s, in order to prove that it is only $\left(K\cdot\epsilon\right)$-close
to being dictated by some $f_{k}$. We show (Lemma \ref{tightness of main result-1})
that this dependence on ${\Var}f$ is tight.

Our first result is a somewhat technical theorem, generalizing the
FKN Proposition. We consider the sum $\sum_{i=1}^{n}X_{i}$ of a sequence
of independent random variables. In particular, we do not assume that
the variables are Boolean, symmetric, balanced, or identically distributed.
Our main technical theorem, which generalizes the FKN Proposition,
states that if this sum does not ``behave like'' any single variable
$X_{k}$, then it is also far from Boolean. In other words, if a sum
of independent random variables is close to a Boolean function then
most of its variance comes from only one variable.

We show that $\sum X_{i}$ is far from Boolean by proving a lower
bound on the variance of its absolute value, ${\Var}\left|\sum X_{i}\right|$.
Note that for any Boolean function $f$, $\left|f\right|=1$ everywhere,
and thus ${\Var}\left|f\right|=0$. Therefore the lower bound on ${\Var}\left|\sum X_{i}\right|$
is in fact also a lower bound on the (semi-)distance from the nearest
Boolean function:
\[
{\Var}\left|\sum X_{i}\right|\leq\min_{\mbox{\ensuremath{f}\,\ is Boolean}}\left\Vert f-\sum X_{i}\right\Vert _{2}^{2}.
\]

By saying that the sum $\sum_{i=1}^{n}X_{i}$ ``behaves like'' a
single variable $X_{k}$, we mean that their difference is almost
a constant function; i.e. that 
\[
\min_{c\in\mathbb{R}}\left\Vert X_{k}-\sum_{i}X_{i}-c\right\Vert _{2}^{2}=\left\Vert \sum_{i\neq k}X_{i}-{\E}\sum_{i\neq k}X_{i}\right\Vert _{2}^{2}={\Var}\sum_{i\neq k}X_{i}
\]

is small.

Furthermore, the definition of ``small'' depends on the expectation
and variance of the sum of the sequence, which we denote by $E$ and
$V$ 
\begin{eqnarray*}
E & = & {\E}\sum X_{i}=\sum{\E}X_{i}\\
V & = & {\Var}\sum X_{i}=\sum{\Var}X_{i}.
\end{eqnarray*}
Formally, our main technical theorem states that
\begin{theorem}
\label{sum of sequence}Let $\left(X_{i}\right)_{i=1}^{n}$ be a sequence
of independent (not necessarily symmetric) random variables, and let
$E$ and $V$ be the expectation and variance of their sum, respectively.
Then for some universal constant $K_{2}\leq61440$ we have that there
exists $k\in\left[n\right]$ such that,
\[
{\Var}\left|\sum_{i}X_{i}\right|\geq\frac{V\cdot{\Var}\sum_{i\neq k}X_{i}}{K_{2}\left(V+E^{2}\right)}.
\]
$\square$
\end{theorem}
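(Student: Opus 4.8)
The plan is to reduce the general statement to the symmetric case handled by prior work, via a standard symmetrization trick, and then to track how the symmetrization affects the relevant quantities. First I would introduce an independent copy $Y_i$ of each $X_i$ and set $Z_i = X_i - Y_i$; these $Z_i$ are independent and \emph{symmetric}, so the symmetric-variable theorem of Jendrej--Oleszkiewicz--Wojtaszczyk quoted in the excerpt applies, giving an index $k$ with $\inf_{E'}\Var\bigl|\sum_i Z_i + E'\bigr| \geq \frac{1}{K}\Var\sum_{i\neq k}Z_i = \frac{2}{K}\Var\sum_{i\neq k}X_i$. The core difficulty is then to convert a lower bound on $\Var|\sum Z_i + E'|$ into a lower bound on $\Var|\sum X_i|$, since $\sum Z_i = \sum X_i - \sum Y_i$ is a difference of two dependent-looking (actually independent) copies rather than a shift of $\sum X_i$ by a constant.

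The bridge I would use is Fact \ref{fac: (z1-z2)}: for the random variable $W = \bigl|\sum_i X_i\bigr|$ we have $\Var W = \tfrac12 \E_{w_1,w_2}(w_1 - w_2)^2$, where $w_1,w_2$ are independent evaluations of $W$. Writing $S = \sum X_i$ and $S' = \sum Y_i$ (an independent copy), this is $\tfrac12\,\E\bigl(|S| - |S'|\bigr)^2$. The reverse triangle inequality gives $\bigl||S|-|S'|\bigr| \le |S - S'| = |\sum Z_i|$, which unfortunately goes the \emph{wrong} way. So instead I would argue in terms of the ``behaves like a variable'' reformulation: I expect the right move is to show the contrapositive-flavored statement, bounding $\Var|\sum Z_i|$ from above by a multiple of $\Var|\sum X_i|$ plus lower-order terms. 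Concretely, $\Var\bigl|\sum Z_i\bigr| = \Var\bigl(|S - S'|\bigr)$, and one can try to relate $|S-S'|$ to $|S|$ and $|S'|$ by splitting on the signs of $S$ and $S'$: when $S,S'$ have the same sign, $|S-S'| = \bigl||S|-|S'|\bigr|$; when opposite signs, $|S-S'| = |S| + |S'|$. The fluctuations of $|S|+|S'|$ and of $|S|-|S'|$ are each controlled by $\Var|S|$ (using independence of $S,S'$ and $\Var|S'| = \Var|S|$), so $\Var|S-S'| = O(\Var|S|) + (\text{contribution from sign-changes})$. The sign-change contribution is where the factor $V/(V+E^2)$ must enter: if $|E|$ is large compared to $\sqrt V$, then $S$ rarely changes sign, $|S|$ behaves like $S - \sign(E)\cdot(\text{something})$, and the whole problem degenerates; the ratio $V/(V+E^2)$ is exactly the scale-invariant quantity measuring ``how bimodal'' the sign of $S$ is. The main obstacle will be making this sign-case analysis quantitative with explicit constants (hence the concrete $K_2 \le 61440$), and handling the $E'$-shift from the symmetric theorem cleanly.

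Assembling: choose $k$ from the symmetric theorem applied to $(Z_i)$. From $\frac{2}{K}\Var\sum_{i\neq k}X_i \le \inf_{E'}\Var|\sum Z_i + E'| \le \Var|\sum Z_i|$ and the sign-case bound $\Var|\sum Z_i| \le C\cdot\frac{V+E^2}{V}\,\Var|\sum X_i|$ (the extra factor being the cost of the sign-degeneracy correction, and $V = \Var\sum Z_i / 2$ entering through normalization of $Z_i$ versus $X_i$), one gets $\Var|\sum X_i| \ge \frac{2V}{CK(V+E^2)}\Var\sum_{i\neq k}X_i$, which is the claimed inequality with $K_2 = CK/2$. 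I would double-check the degenerate edge cases separately: if $V = 0$ the statement is vacuous (right side is $0$), and if $\Var\sum_{i\neq k}X_i = 0$ for the chosen $k$ there is nothing to prove. The only genuinely delicate accounting is propagating the universal constant from the symmetric theorem (whose value governs $K_2$) through the symmetrization and the sign split; everything else is an application of Facts \ref{fac: relaxed triangle ineq}, \ref{fac: (z1-z2)}, and \ref{fac:var-l2_norm} together with independence.
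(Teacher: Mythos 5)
Your route—symmetrize to $Z_i = X_i - Y_i$ and invoke the Jendrej--Oleszkiewicz--Wojtaszczyk theorem for symmetric variables—is genuinely different from the paper's. The paper proves the $n$-variable statement from scratch by partitioning $[n]$ into two blocks (either two halves with variance $\Theta(V)$ each, or $\{k\}$ and its complement if $\Var X_k > 2V/3$) and applying a self-contained two-variable lemma, Lemma \ref{lemma: Var|X+Y+c| > K * VarX * VarY}, which is itself proved via constant-absolute-value approximation and a brute-force sign analysis.

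However, your proposal has a genuine gap at the step converting $\Var|\sum Z_i|$ back to $\Var|\sum X_i|$. The bound you want, $\Var|\sum Z_i| \le C\cdot\frac{V+E^2}{V}\,\Var|\sum X_i|$ (even with lower-order corrections), is false. Take $n=1$ with $X_1 = \pm 1$ equiprobably. Then $V = 1$, $E = 0$, and $\Var|X_1| = 0$. But $Z_1 = X_1 - Y_1 \in \{-2,0,2\}$ with probabilities $1/4, 1/2, 1/4$, so $|Z_1| \in \{0,2\}$ each with probability $1/2$, giving $\Var|Z_1| = 1$ (and $\inf_{E'}\Var|Z_1 + E'| = 2/3$). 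Any inequality of your proposed shape would force $1 \le C \cdot 1 \cdot 0 = 0$. The obstruction is structural: symmetrization destroys exactly the feature you are trying to detect. When $S$ is bimodal with constant absolute value $c$, $\Var|S| = 0$ but $S - S'$ has atoms at $\{-2c, 0, 2c\}$ and $\Var|S-S'| \sim c^2$. Your ``sign-change contribution'' is therefore not a lower-order correction controlled by $\Var|S|$; it is a main term of order $c^2$, and the factor $V/(V+E^2)$ cannot rescue it (in the counterexample that ratio equals $1$, its maximum). Your chain is then consistent only because $\Var\sum_{i\neq k}X_i = 0$ there, so the theorem still holds trivially---but the intermediate inequality is false and the argument does not constitute a proof. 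To make a symmetrization route work you would need a different bridge that does not try to dominate $\Var|\sum Z_i|$ by $\Var|\sum X_i|$ alone; the paper sidesteps this entirely by never passing through $S - S'$ and instead handling the sign structure directly inside Lemma \ref{lemma: Var|X+Y+c| > K * VarX * VarY} and Claim \ref{cla: const  | |}.
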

The main motivation for proving this theorem is that it implies a
generalization of the FKN Theorem, Corollary \ref{fkn-like} below. 

Intuitively, while the FKN Theorem holds for Boolean functions that
are almost linear in \emph{individual} variables, we generalize
it to functions that are almost linear with respect to a \emph{partition}
of the variables. 

Formally, let $f\colon\left\{ \pm1\right\} ^{m}\rightarrow\left\{ \pm1\right\} $
and let $I_{1},\dots,I_{n}$ be a partition of $\left[m\right]$;
denote by $f_{j}$ the restriction of $f$ to each subset of variables:
\[
f_{j}=\sum_{\emptyset\neq S\subseteq I_{j}}\widehat{f}\left(S\right)\chi_{S}.
\]
Our main corollary states that if $f$ behaves like the sum of the
$f_{j}$'s then it behaves like some single $f_{k}$:
\begin{corollary}
\label{fkn-like}Let $f$, $I_{j}$'s, and $f_{j}$'s be as defined
above. Suppose that $f$ is concentrated on coefficients that do not
cross the partition, i.e.:
\[
\sum_{S\colon\exists j,\, S\subseteq I_{j}}\hat{f}{}^{2}\left(S\right)\geq1-\left(\epsilon\cdot{\Var}f\right).
\]
Then for some $k\in\left[n\right]$, $f$ is close to $f_{k}+\widehat{f}\left(\emptyset\right)$:
\[
\left\Vert f-f_{k}-\widehat{f}\left(\emptyset\right)\right\Vert _{2}^{2}\leq\left(K_{2}+2\right)\cdot\epsilon.
\]
$\square$
\end{corollary}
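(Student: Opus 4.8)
The plan is to derive the corollary from Theorem \ref{sum of sequence} by treating each restriction $f_{j}$ as an independent random variable and applying the theorem to the sequence $(f_{j})_{j=1}^{n}$. First I would observe that since the $I_{j}$'s partition $[m]$, the functions $f_{1},\dots,f_{n}$ depend on disjoint blocks of variables and hence, under the uniform distribution on $\{\pm1\}^{m}$, the random variables $f_{1},\dots,f_{n}$ are mutually independent. Their sum, plus the constant term, is exactly the ``non-crossing'' part of $f$: write $g=\widehat f(\emptyset)+\sum_{j}f_{j}=\sum_{S:\exists j,\,S\subseteq I_{j}}\widehat f(S)\chi_{S}$. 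By Fact \ref{fac:l2_norm-ft} the hypothesis says precisely $\left\Vert f-g\right\Vert_{2}^{2}\le\epsilon\cdot{\Var}f$.

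Next I would control the variance and expectation of $\sum_j f_j$. Since each $f_j$ has $\widehat{f_j}(\emptyset)=0$, we have ${\E}\sum_j f_j=0$, so in the notation of Theorem \ref{sum of sequence}, $E=0$. For the variance, $V={\Var}\sum_j f_j={\Var}g$; and since $g$ is close to $f$ in the $L^2$-squared semimetric, Fact \ref{fac: L2 norm and var} gives $V={\Var}g\ge\frac12{\Var}f-\left\Vert f-g\right\Vert_2^2\ge(\tfrac12-\epsilon){\Var}f$, which is positive and comparable to ${\Var}f$ (one may assume $\epsilon$ small, else the conclusion is trivial since $\left\Vert f-f_k-\widehat f(\emptyset)\right\Vert_2^2\le 4$ always). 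Now apply Theorem \ref{sum of sequence} to $(f_j)$: since $E=0$, the denominator $K_2(V+E^2)$ is just $K_2 V$, and we get a $k$ with
\[
{\Var}\left|\textstyle\sum_j f_j\right|\ \ge\ \frac{{\Var}\sum_{j\ne k}f_j}{K_2}.
\]
Since $|f|\equiv1$, ${\Var}|f|=0$, so by Fact \ref{fac: L2 norm and var} applied to $|f|$ and $|g|$ (noting $\bigl||f(x)|-|g(x)|\bigr|\le|f(x)-g(x)|$ pointwise, hence $\left\Vert|f|-|g|\right\Vert_2^2\le\left\Vert f-g\right\Vert_2^2$) we bound ${\Var}|g|\le 2\left\Vert f-g\right\Vert_2^2+2\,{\Var}|f|=2\left\Vert f-g\right\Vert_2^2\le 2\epsilon\cdot{\Var}f$. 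Combining, ${\Var}\sum_{j\ne k}f_j\le 2K_2\,\epsilon\cdot{\Var}f$.

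Finally I would assemble the pieces. We have $f-f_k-\widehat f(\emptyset)=(f-g)+\sum_{j\ne k}f_j$, so by the $2$-relaxed triangle inequality (Fact \ref{fac: relaxed triangle ineq}), or rather by $\left\Vert a+b\right\Vert_2^2\le 2\left\Vert a\right\Vert_2^2+2\left\Vert b\right\Vert_2^2$,
\[
\left\Vert f-f_k-\widehat f(\emptyset)\right\Vert_2^2\ \le\ 2\left\Vert f-g\right\Vert_2^2+2\left\Vert\textstyle\sum_{j\ne k}f_j\right\Vert_2^2\ =\ 2\left\Vert f-g\right\Vert_2^2+2\,{\Var}\!\textstyle\sum_{j\ne k}f_j,
\]
using that $\sum_{j\ne k}f_j$ has zero mean so its squared norm equals its variance. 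Plugging in $\left\Vert f-g\right\Vert_2^2\le\epsilon\cdot{\Var}f\le\epsilon$ (since ${\Var}f\le1$ for a Boolean function) and ${\Var}\sum_{j\ne k}f_j\le 2K_2\epsilon$ yields the bound $(2+2\cdot 2K_2)\epsilon$; the constants can be tightened along the way (e.g.\ the factor-of-$2$ losses can be avoided since $f-g$ and $\sum_{j\ne k}f_j$ are orthogonal in Fourier space, being supported on disjoint coefficient sets) to reach exactly $(K_2+2)\epsilon$. The main obstacle is bookkeeping the constants carefully—in particular using Fourier-orthogonality rather than the relaxed triangle inequality at the last step, and verifying the passage from ${\Var}|\sum f_j|$ small to ${\Var}\sum_{j\ne k}f_j$ small picks up only the claimed factor—but conceptually everything reduces to a clean application of Theorem \ref{sum of sequence} with $E=0$.
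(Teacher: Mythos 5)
Your overall strategy is the same as the paper's (apply Theorem \ref{sum of sequence} to the independent blocks $f_j$, use Booleanness of $f$ to upper-bound the variance of an absolute value, conclude $\Var\sum_{j\neq k}f_j$ is small, then assemble via Fourier orthogonality), and your final assembly step is fine. But there is a genuine gap in the middle. You apply Theorem \ref{sum of sequence} to the sequence $(f_j)$ with $E=0$, which gives a lower bound on $\Var\left|\sum_j f_j\right|$ --- the absolute value of the sum \emph{without} the constant term. The hypothesis, however, only gives you an upper bound on $\Var\left|g\right|=\Var\left|\sum_j f_j+\widehat f\left(\emptyset\right)\right|$ (via $\left\Vert f-g\right\Vert_2^2\le\epsilon\cdot\Var f$ and $|f|\equiv1$). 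Your ``Combining'' step equates these two quantities, but the variance of an absolute value is not translation-invariant, and no inequality holds between them in general. Concretely, if $f$ has no crossing coefficients at all (so $\epsilon$ can be taken arbitrarily small), then $\sum_j f_j=f-\E f$, and for an unbalanced Boolean function $\Var\left|f-\E f\right|=4p\left(1-p\right)\left(1-2p\right)^2=\Theta\left(\Var f\right)$, which is nowhere near the bound $2\epsilon\cdot\Var f$ you claim for it. So the argument as written only works in the balanced case $\widehat f\left(\emptyset\right)=0$, which is exactly the case where the corollary's $\Var f$ subtlety is invisible.

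The fix is to keep the constant inside the absolute value when invoking the theorem, i.e.\ apply it to $\sum_j f_j+\widehat f\left(\emptyset\right)$ with $E=\widehat f\left(\emptyset\right)$ and $V=\Var\sum_j f_j$ (absorbing the constant into one of the variables, or treating it as an extra variable and handling the case where $k$ picks it). This is what the paper does, and it is precisely where the $V+E^2$ denominator earns its keep: one then gets $\Var\sum_{j\neq k}f_j\le K_2\,\epsilon\,\Var f\cdot\frac{V+E^2}{V}$, and the factor $\Var f$ is cancelled using $V+E^2\le1$ together with $V\ge\left(1-\epsilon\right)\Var f$ from the premise (plus the harmless assumption $\epsilon\le1/\left(K_2+2\right)$), yielding $\Var\sum_{j\neq k}f_j\le\left(K_2+1\right)\epsilon$ before the final orthogonality step. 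Your setting $E=0$ discards exactly the mechanism that makes the unbalanced case work.
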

In particular, notice that it implies that $f$ is concentrated on
the variables in a single subset $I_{k}$. 

Unlike the FKN Theorem and many similar statements, it does not suffice
to assume that $f$ is $\epsilon$-close to linear. Our main results
require a dependence on the variance of $f$. We prove in Section
\ref{sub:Tightness-of-the-main} that this dependence is tight up
to a constant factor by constructing an example for which ${\Var}f=o\left(1\right)$
and $f$ is $\left(\epsilon\cdot{\Var}f\right)$-close to linear with
respect to a partition, but $f$ is still $\Omega\left(\epsilon\right)$-far
from being dictated by any subset.
\begin{lemma}
\label{tightness of main result-1}Corollary \ref{fkn-like} is tight
up to a constant factor. In particular, the factor ${\Var}f$ is necessary.

More precisely, there exists a series of functions $f^{\left(m\right)}\colon\left\{ \pm1\right\} ^{2m}\rightarrow\left\{ \pm1\right\} $
and partitions $\left(I_{1}^{\left(m\right)},I_{2}^{\left(m\right)}\right)$
such that the restrictions $\left(f_{1}^{\left(m\right)},f_{2}^{\left(m\right)}\right)$
of $f^{\left(m\right)}$ to variables in $I_{j}^{\left(m\right)}$
satisfy 
\[
\sum_{S\colon\exists j,\, S\subseteq I_{j}^{\left(m\right)}}\hat{f}{}^{2}\left(S\right)=1-O\left(2^{-m}\cdot{\Var}f\right),
\]
but for every $j\in\left\{ 1,2\right\} $
\[
\left\Vert f^{\left(m\right)}-f_{j}^{\left(m\right)}-\widehat{f^{\left(m\right)}}\left(\emptyset\right)\right\Vert _{2}^{2}=\Theta\left(2^{-m}\right)=\omega\left(2^{-m}\cdot{\Var}f\right).
\]

\end{lemma}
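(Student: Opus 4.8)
The plan is to exhibit an explicit family rather than argue abstractly. Fix $m$, set $I_{1}=\left\{ 1,\dots,m\right\}$ and $I_{2}=\left\{ m+1,\dots,2m\right\}$, and for $j\in\left\{ 1,2\right\}$ let $A_{j}\subseteq\left\{ \pm1\right\} ^{2m}$ be the event ``all coordinates in $I_{j}$ equal $+1$'', whose $\left\{ 0,1\right\}$-indicator is $\mathbf{1}_{A_{j}}=\prod_{i\in I_{j}}\frac{1+x_{i}}{2}$, a function of the $I_{j}$-variables only. Define $f^{\left(m\right)}$ to be $+1$ on $A_{1}\cup A_{2}$ and $-1$ elsewhere; equivalently, by inclusion--exclusion, $f^{\left(m\right)}=-1+2\mathbf{1}_{A_{1}}+2\mathbf{1}_{A_{2}}-2\mathbf{1}_{A_{1}}\mathbf{1}_{A_{2}}$. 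Morally this is ``the OR of an $m$-bit AND on each block''; the entire point of the construction is that each block event has probability $2^{-m}$ while their intersection has probability only $2^{-2m}$, which is exactly what makes the crossing Fourier mass a factor $2^{-m}$ smaller than the variance.

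Next I would read off the (highly symmetric) Fourier spectrum. Since $\widehat{\mathbf{1}_{A_{j}}}\left(S\right)=2^{-m}$ for every $S\subseteq I_{j}$ and $0$ otherwise, while $\widehat{\mathbf{1}_{A_{1}}\mathbf{1}_{A_{2}}}\left(S\right)=2^{-2m}$ for every $S\subseteq I_{1}\cup I_{2}$, one gets $\widehat{f^{\left(m\right)}}\left(\emptyset\right)=-1+2^{2-m}-2^{1-2m}$; $\widehat{f^{\left(m\right)}}\left(S\right)=2^{1-m}-2^{1-2m}$ for each $\emptyset\neq S\subseteq I_{j}$; and $\widehat{f^{\left(m\right)}}\left(S\right)=-2^{1-2m}$ for every $S$ crossing the partition. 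From these closed forms the three estimates needed are immediate: (i) ${\Var}f^{\left(m\right)}=1-\widehat{f^{\left(m\right)}}\left(\emptyset\right)^{2}=\Theta\left(2^{-m}\right)$; (ii) the crossing weight equals $\left(2^{2m}-2\cdot2^{m}+1\right)\cdot\left(2^{1-2m}\right)^{2}=\Theta\left(2^{-2m}\right)$, hence the non-crossing weight is $1-\Theta\left(2^{-2m}\right)=1-O\left(2^{-m}\cdot{\Var}f^{\left(m\right)}\right)$; (iii) $\left\Vert f_{j}^{\left(m\right)}\right\Vert _{2}^{2}=\left(2^{m}-1\right)\left(2^{1-m}-2^{1-2m}\right)^{2}=\Theta\left(2^{-m}\right)$ for each $j$.

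Finally I would combine these via Fact~\ref{fac:l2_norm-ft}: the Fourier coefficients of $f^{\left(m\right)}-f_{j}^{\left(m\right)}-\widehat{f^{\left(m\right)}}\left(\emptyset\right)$ are precisely those of $f^{\left(m\right)}$ on the sets not contained in $I_{j}$, so $\left\Vert f^{\left(m\right)}-f_{j}^{\left(m\right)}-\widehat{f^{\left(m\right)}}\left(\emptyset\right)\right\Vert _{2}^{2}=\left\Vert f_{3-j}^{\left(m\right)}\right\Vert _{2}^{2}+\left(\text{crossing weight}\right)=\Theta\left(2^{-m}\right)+\Theta\left(2^{-2m}\right)=\Theta\left(2^{-m}\right)$, which is $\omega\left(2^{-m}\cdot{\Var}f^{\left(m\right)}\right)$ since $2^{-m}\cdot{\Var}f^{\left(m\right)}=\Theta\left(2^{-2m}\right)$. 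This yields both displayed claims of the lemma.

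I do not expect a genuine obstacle once the example is chosen: the whole argument is an inclusion--exclusion computation of an explicit spectrum. The only points requiring care are (a) picking the example so that the two single-block events have probability $2^{-m}$ but their intersection has probability $2^{-2m}$ --- replacing each AND by, say, a parity on the block would make all three probabilities comparable and collapse the separation; and (b) tracking the lower-order $\left(1-2^{-m}\right)$-type factors carefully enough to confirm both that the crossing weight is genuinely dominated by (not merely comparable to, in the wrong direction with) $2^{-m}\cdot{\Var}f^{\left(m\right)}$, and that $\left\Vert f_{j}^{\left(m\right)}\right\Vert _{2}^{2}$ is really $\Theta\left(2^{-m}\right)$ rather than something smaller.
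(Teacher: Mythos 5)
Your construction is exactly the paper's counterexample---the OR of an AND on each block, up to flipping sign conventions---and your explicit Fourier computation establishes the same three estimates the paper uses ($\Var f^{\left(m\right)}=\Theta\left(2^{-m}\right)$, cross-partition weight $\Theta\left(2^{-2m}\right)$, and distance $\Theta\left(2^{-m}\right)$ from any single-block restriction), so the proof is correct and essentially the same as the paper's. The only difference is presentational: the paper verifies closeness via the identity $f=\frac{X+Y+XY-1}{2}$ and merely asserts the final ``$\Omega\left(2^{-m}\right)$-far from either block'' claim, whereas you read everything off the explicit spectrum, which in fact makes that last step fully explicit.
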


\subsection{From our results to the FKN Theorem}

We claim that our results generalize the FKN Theorem. For a constant
variance, the FKN Theorem indeed follows immediately from Corollary
\ref{fkn-like} (for some worse constant $K_{\mbox{FKN}}\geq\frac{K_{2}}{{\Var}f}$).
However, because the premise of Corollary \ref{fkn-like} depends
on the variance, it may not be obvious how to obtain the FKN Theorem
for the general case, where the variance may go to zero. Nonetheless,
we note that thanks to an observation by Guy Kindler \cite{FKN} the
FKN Theorem follows easily once the special case of balanced functions
is proven.

Given a Boolean function $f\colon\left\{ \pm1\right\} ^{n}\rightarrow\left\{ \pm1\right\} $,
we define a \emph{balanced} Boolean function $g\colon\left\{ \pm1\right\} ^{n+1}\rightarrow\left\{ \pm1\right\} $
that will be \emph{as close to linear} as $f$, 
\[
g\left(x_{1},x_{2},\dots,x_{n},x_{n+1}\right)=x_{n+1}\cdot f\left(x_{n+1}\cdot x_{1},x_{n+1}\cdot x_{2},\dots,x_{n+1}\cdot x_{n}\right).
\]
First, notice that $g$ is indeed balanced because:
\begin{gather*}
2{\E}\left[g\left(X;x_{n+1}\right)\right]={\E}\left[f\left(X\right)\right]-{\E}\left[f\left(-X\right)\right]={\E}\left[f\left(X\right)\right]-{\E}\left[f\left(X\right)\right]=0,
\end{gather*}
where the second equality holds because under uniform distribution
taking the expectation over $X$ is the same as taking the expectation
over $-X$. 

Observe also that every monomial $\widehat{f}\left(S\right)\chi_{S}\left(X\right)$
in the Fourier representation of $f\left(X\right)$ is multiplied
by $x_{n+1}^{\left|S\right|+1}$ in the Fourier transform of $g\left(X;x_{n+1}\right)$.
(The $\left|S\right|+1$ in the exponent comes from $\left|S\right|$
for all the variables that appear in the monomial, and another $1$
for the $x_{n+1}$ outside the function). Since $x_{n+1}\in\left\{ \pm1\right\} $,
for odd $\left|S\right|$ we have that $x_{n+1}^{\left|S\right|+1}=1$,
and the monomial does not change, i.e. $\widehat{g}\left(S\right)=\widehat{f}\left(S\right)$;
for even $\left|S\right|$, $x_{n+1}^{\left|S\right|+1}=x_{n+1}$,
so $\widehat{g}\left(S\cup\left\{ n+1\right\} \right)=\widehat{f}\left(S\right)$.
In particular, the total weight on the first and zeroth level of the
Fourier representation is preserved because 
\begin{gather}
\forall i\in\left[n\right]\;\widehat{g}\left(\left\{ i\right\} \right)=\widehat{f}\left(\left\{ i\right\} \right);\,\,\,\widehat{g}\left(\left\{ n+1\right\} \right)=\widehat{f}\left(\emptyset\right).\label{eq:guys_trick}
\end{gather}

If $f$ satisfies the premise for the FKN Theorem, i.e. if $\sum_{\left|S\right|\leq1}\hat{f}{}^{2}\left(S\right)\geq1-\epsilon$,
then from \eqref{eq:guys_trick} it is clear that the same also holds
for $g$. From the FKN Theorem for the balanced special case we deduce
that $g$ is $\left(K\cdot\epsilon\right)$-close to a dictatorship,
i.e. there exists $k\in\left[n+1\right]$ such that $\widehat{g}\left(\left\{ k\right\} \right)^{2}\geq1-\left(K\cdot\epsilon\right)$.
Therefore by \eqref{eq:guys_trick} $f$ is also $\left(K\cdot\epsilon\right)$-close
to either a dictatorship (when $k\in\left[n\right]$) or a constant
function (when $k=n+1$). The FKN Theorem for balanced functions follows
as a special case of our main results, and therefore this work also
provides an alternative proof for the FKN Theorem.

\section{\label{sec:High-Level-Outline}High-Level Outline of the Proof}

The main step to proving Theorem \ref{sum of sequence} for a sequence
of $n$ variables, is Lemma \ref{lemma: Var|X+Y+c| > K * VarX * VarY}
below which handles the special case of only two random variables.
The main theorem then follows by partitioning the $n$ variables into
two subsets, and labeling their sums $X$ and $Y$, respectively (Subsection
\ref{sub:main-thm}).
\begin{lemma}
\label{lemma: Var|X+Y+c| > K * VarX * VarY}Let $X,Y$ be any two independent
random variables, and let $E$ and $V$ be the expectation and variance
of their sum, respectively. Then for some universal constant $K_{1}\leq20480$,
\begin{equation}
{\Var}\left|X+Y\right|\geq\frac{V\cdot\min\left\{ {\Var}X,{\Var}Y\right\} }{K_{1}\left(V+E^{2}\right)}.\label{eq: lemma statement}
\end{equation}
$\square$
\end{lemma}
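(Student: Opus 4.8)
\textbf{Proof strategy for Lemma \ref{lemma: Var|X+Y+c| > K * VarX * VarY}.}

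The plan is to reduce to the case where $X$ and $Y$ are both bounded and symmetric, and then exploit a concrete anticoncentration argument for the sum. First I would normalize: by translating $X$ (absorbing the shift into $E$) we may assume $\E X = 0$, and similarly think of $E = \E Y$ as the entire mean. WLOG $\Var X \le \Var Y$, so the right-hand side is $\frac{V\cdot\Var X}{K_1(V+E^2)}$. The key difficulty is that $X$ and $Y$ are completely arbitrary --- possibly heavy-tailed, possibly very asymmetric --- so the first real step is a truncation/symmetrization argument: replace $X$ by a symmetric bounded variable $X'$ with $\Var X' = \Theta(\Var X)$ that is stochastically controlled by $X$ (e.g. condition on $|X - \E X|$ lying in a range capturing a constant fraction of the variance, and symmetrize using an independent sign), losing only constant factors, and similarly get a copy of $Y$. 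The point of symmetrizing $X$ is that for a symmetric $X'$ supported (with constant probability) on $\{\pm a\}$ with $a = \Theta(\sqrt{\Var X})$, conditioning on that event, $X' + Y$ takes the two values $Y \pm a$ each with probability $1/2$, so $\bigl|\,|Y+a| - |Y-a|\,\bigr|$ directly feeds a lower bound on $\Var|X+Y|$ via Fact \ref{fac: (z1-z2)}: two independent draws of $|X+Y|$ land on $|Y+a|$ and $|Y-a|$ for the same $Y$ with constant probability.

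Next I would analyze $\E_Y\bigl[(|Y+a|-|Y-a|)^2\bigr]$. Note $|y+a| - |y-a|$ equals $2a$ when $y \ge a$, equals $-2a$ when $y \le -a$, and equals $2y$ when $|y| \le a$; in all cases it equals $2\,\mathrm{clip}_a(y) := 2\max\{-a,\min\{a,y\}\}$. So we need a lower bound on $\E[\mathrm{clip}_a(Y)^2]$ where $a = \Theta(\sqrt{\Var X}) = \Theta(\sqrt{\min\{\Var X,\Var Y\}})$. This is where the factor $\frac{V}{V+E^2}$ enters: if $|\E Y| = |E|$ is not too large compared to $\sqrt{V}$ and $a$, then $Y$ has constant probability of lying at distance $\Theta(\min\{a,\sqrt{\Var Y}\}) = \Theta(a)$ from $0$ on the correct side, giving $\E[\mathrm{clip}_a(Y)^2] = \Omega(a^2) = \Omega(\Var X)$, hence $\Var|X+Y| = \Omega(\Var X)$, which dominates the claimed bound since $\frac{V}{V+E^2}\le 1$. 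The genuinely delicate regime is $|E| \gg \sqrt V$: then $Y$ is concentrated far from the origin (on one side), $|X+Y|$ essentially equals $\pm(X+Y)$ without the absolute value doing anything, and one expects $\Var|X+Y| \approx \Var(X+Y) = V$ --- but we must be careful because $X$ itself could occasionally be huge and flip the sign of $X+Y$. I would handle this by splitting on whether $|X - \E X|$ exceeds, say, $|E|/10$: on the bulk event the sign of $X+Y$ is pinned down and $\Var|X+Y| \gtrsim \Var X$ restricted to that event; the contribution of the tail event is controlled because it carries at most a constant fraction of $\Var X$ after the truncation. Combining, in the large-$E$ regime we get $\Var|X+Y| = \Omega(\Var X) = \Omega\!\bigl(\frac{V\Var X}{V+E^2}\bigr)$ as well (in fact stronger), and in the small-$E$ regime $\frac{V}{V+E^2} = \Theta(1)$ so again $\Omega(\Var X)$ suffices.

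The main obstacle --- and the place where the bulk of the casework in Section \ref{sec:Proofs-of-technical} presumably goes --- is making the truncation quantitatively honest: one must choose the truncation radius for $X$ so that (i) a constant fraction of $\Var X$ survives, (ii) the surviving part is genuinely two-valued-like after symmetrization so the $(x_1-x_2)^2$ trick applies cleanly, and (iii) the discarded tail of $X$ cannot conspire with $Y$ to make $|X+Y|$ artificially constant. A clean way to organize (iii) is: either $\Var X$ is concentrated in a bounded window (easy, symmetrize directly), or $X$ has a heavy tail, in which case $X$ alone already forces $\Var(X+Y)$ to be large and one shows directly that the absolute value can destroy at most a constant fraction of it --- essentially because $|x+y|^2 = (x+y)^2$, so $\Var|X+Y| \ge \Var(X+Y) - (\text{correction from sign changes})$, and sign changes are rare enough. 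Tracking all these constants is what produces the explicit $K_1 \le 20480$. Throughout, Facts \ref{fac: (z1-z2)}, \ref{fac:var-min_l2_norm}, and \ref{fac: L2 norm and var} are the workhorses for converting between variances, expected squared differences, and $L^2$ distances.
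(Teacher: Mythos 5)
Your plan is a genuinely different route from the paper's (the paper never symmetrizes or truncates; it splits on whether $\max\{\Var|\overline{X}+E|,\Var|\overline{Y}+E|\}$ is large or small relative to $\min\{\Var X,\Var Y\}$), but as written it has a real gap in the small-$|E|$ regime. The anticoncentration claim you rely on there --- that a variable $Y$ with small mean and $\Var Y\ge\Var X$ ``has constant probability of lying at distance $\Theta(a)$ from $0$'', $a=\Theta(\sqrt{\Var X})$ --- is false: the variance of $Y$ may be carried entirely by rare far excursions. Concretely, take $X=\pm a$ with probability $\tfrac12$ each, $E=0$, and $Y=0$ with probability $1-p$, $Y=\pm N$ with probability $p/2$ each, with $pN^2\ge a^2$ and $p$ arbitrarily small. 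Then $\E[\mathrm{clip}_a(Y)^2]=pa^2$, so your conditional-variance/clip argument yields only $\Var|X+Y|\gtrsim pa^2$, whereas the lemma demands $\Omega(a^2)$ (here $V/(V+E^2)=1$). The inequality does hold in this example, but for a reason your outline cannot see: the fluctuation of $\E[\,|X+Y|\mid Y\,]$ over the rare values of $Y$ is large, i.e.\ $\Var|Y+\E X|$ is already $\Omega(\Var Y)$. Your heavy-tail discussion addresses only tails of $X$ and only the large-$|E|$ regime, so this case is simply uncovered. This is precisely the case the paper isolates and handles with its auxiliary Lemma \ref{lemma: |X+Y| > K*|X+E=00005BY=00005D|} (adding an independent variable cannot shrink the variance of the \emph{absolute value} by more than a factor $K_{0}\le4$; proved via a Krein--Milman reduction to two-point variables and Claim \ref{cla:For-every x,x,y,y}), reserving the two-point-style sign analysis (Claim \ref{cla: const  | |}) for the case where both $|\overline X+E|$ and $|\overline Y+E|$ are nearly constant. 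To repair your proof you would need an analogue of that auxiliary lemma; it is not a routine patch.

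Two smaller issues. First, Fact \ref{fac: (z1-z2)} uses two fully independent evaluations, so two draws of $|X+Y|$ do not share ``the same $Y$''; the correct statement of your step is $\Var|X+Y|\ge\E_{Y}\bigl[\Var\bigl(|X+Y|\,\big|\,Y\bigr)\bigr]$ (law of total variance), after which the conditional variance is bounded below by the clip expression --- this is fixable. Second, in the large-$|E|$ regime, ``sign changes are rare'' is not by itself sufficient when $\Var X$ is carried by a rare tail excluded from the bulk event (e.g.\ $X=\pm M$ with $M\gg|E|$ on an event of tiny probability): restricted to the bulk, $X$ can be essentially constant, and the required variance must instead be extracted from those excursions contributing directly to $\Var|X+Y|$, which needs a separate argument beyond what you sketch.
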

Intuitively, in the expression on the left-hand side of \eqref{eq: lemma statement}
we consider the \emph{sum} of two independent variables, which we
may expect to variate more than each variable separately. Per contra,
the same side of \eqref{eq: lemma statement} also has the variance
of the \emph{absolute value}, which is in general smaller than just
the variance (without absolute value). Lemma \ref{lemma: Var|X+Y+c| > K * VarX * VarY}
bounds this loss of variance.

On a high level, the main idea of the proof of Lemma \ref{lemma: Var|X+Y+c| > K * VarX * VarY}
is separation to two cases, depending on the \emph{variance of the
absolute value} of the random variables, relative to the original
variance of the variables (without absolute value):
\begin{enumerate}
\item If both $\left|X+{\E}Y\right|$ and $\left|Y+{\E}X\right|$ have relatively
small variance, then $X+{\E}Y$ and $Y+{\E}X$ can be both approximated
by random variables with \emph{constant absolute values}. In this
case we prove the result by case analysis.
\item If either $\left|X+{\E}Y\right|$ or $\left|Y+{\E}X\right|$ has a
relatively large variance, we prove an auxiliary lemma that states
that the variance of the absolute value of the sum, ${\Var}\left|X+Y\right|$,
is not much smaller than the variance of the \emph{absolute value}
of either variable (${\Var}\left|X+{\E}Y\right|$, ${\Var}\left|Y+{\E}X\right|$):\end{enumerate}
\begin{lemma}
\label{lemma: |X+Y| > K*|X+E=00005BY=00005D|}Let $X,Y$ be any two
independent random variables, and let $E$ be the expectation of their
sum. Then for some universal constant $K_{0}\leq4$,
\[
{\Var}\left|X+Y\right|\geq\frac{\max\left\{ {\Var}\left|X+{\E}Y\right|,{\Var}\left|Y+{\E}X\right|\right\} }{K_{0}}.
\]
$\square$
\end{lemma}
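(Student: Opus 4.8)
The plan is as follows. By the symmetry of the statement in $X$ and $Y$, it suffices to prove $\Var|X+Y|\ge\frac14\Var|X+\E Y|$; applying this also with the roles of $X,Y$ exchanged then gives the claimed bound on the maximum, with $K_{0}=4$. Using Fact \ref{fac: (z1-z2)}, write $\Var|X+Y|=\frac12\E\big[(|X'+Y'|-|X''+Y''|)^{2}\big]$ over two independent copies $(X',Y'),(X'',Y'')$ of $(X,Y)$, and likewise $\Var|X+\E Y|=\frac12\E\big[(|X'+\E Y|-|X''+\E Y|)^{2}\big]$. Conditioning on $X',X''$ reduces the lemma to the pointwise estimate: for all reals $p,q$ and independent mean-zero $W_{1},W_{2}$ distributed as $Y-\E Y$,
\[
\E\big[(|p+W_{1}|-|q+W_{2}|)^{2}\big]\ \ge\ \tfrac14\,(|p|-|q|)^{2},
\]
where $p,q$ play the role of $X'+\E Y,\,X''+\E Y$ (so that $|p+W_{1}|=|X'+Y'|$, etc.).

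Since $|p+W_{1}|$ and $|q+W_{2}|$ are independent, the left side equals, writing $h(t):=\E|t+W|$ and $\sigma^{2}:=\Var W=\Var Y$ and using $\E W=0$,
\[
\E\big[(p+W_{1})^{2}\big]+\E\big[(q+W_{2})^{2}\big]-2h(p)h(q)\ =\ (p^{2}+\sigma^{2})+(q^{2}+\sigma^{2})-2h(p)h(q).
\]
Assume WLOG $|p|\ge|q|$. I would bound $h(p)h(q)$ from above in two complementary ways. By Jensen, $h(t)^{2}\le\E(t+W)^{2}=t^{2}+\sigma^{2}$, so the quantity above is at least $\big(\sqrt{p^{2}+\sigma^{2}}-\sqrt{q^{2}+\sigma^{2}}\big)^{2}$; rationalizing, $\sqrt{p^{2}+\sigma^{2}}-\sqrt{q^{2}+\sigma^{2}}=\dfrac{(|p|-|q|)(|p|+|q|)}{\sqrt{p^{2}+\sigma^{2}}+\sqrt{q^{2}+\sigma^{2}}}$, and since the denominator is at most $|p|+|q|+2\sigma$, this gives the desired bound whenever $|p|+|q|\ge2\sigma$. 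By the triangle inequality, $h(t)\le|t|+\E|W|$, which yields a lower bound of $(|p|-|q|)^{2}+2\big(\sigma^{2}-(\E|W|)^{2}\big)-2(|p|+|q|)\E|W|\ \ge\ (|p|-|q|)^{2}-2(|p|+|q|)\E|W|$, and this gives the desired bound whenever $(|p|+|q|)\,\E|W|\le\frac38(|p|-|q|)^{2}$.

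The remaining regime is the main obstacle: $\Var W$ is large relative to $(|p|+|q|)^{2}$ while $(|p|+|q|)\E|W|$ is large relative to $(|p|-|q|)^{2}$, so both crude estimates leave a deficit. The point to exploit is that $h(t)\le|t|+\E|W|$ is wasteful precisely when $|W|$ is nearly constant, and $|W|$ nearly constant together with $\E W=0$ forces $W$ to be nearly symmetric --- hence $h$ nearly even, with nearly vanishing one-sided derivatives at the origin. Quantitatively, the excess $\sigma^{2}-(\E|W|)^{2}=\Var|W|$ (a measure of how far $W$ is from a symmetric two-point law) and the term $p^{2}+q^{2}$ must be traded off against each other --- at least one of them is forced to be large in this regime --- and I expect this to require a further sub-division together with a second-order estimate for $h$ near $0$ controlled by $\Var|W|$.
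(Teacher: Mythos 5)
Your setup is sound and mirrors the paper's strategy up to a point: reduce to one side of the max by symmetry, invoke Fact~\ref{fac: (z1-z2)} to write variance as an expected squared difference, condition on the $X$-coordinates, and reduce to the pointwise inequality $\E[(|p+W_1|-|q+W_2|)^2]\ge\frac14(|p|-|q|)^2$. This is essentially the paper's Claim~\ref{cla:For-every x,x,y,y}. However, your proof is genuinely incomplete, and the gap you admit at the end is not a small one.

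The missing ingredient is the paper's reduction, before anything else, of $\overline Y$ to a \emph{two-point} balanced distribution via Krein--Milman together with convexity of variance: $\Var|\overline X+\overline Y+E|\ge\E_\alpha\Var|\overline X+\overline Y_\alpha+E|$, where each $\overline Y_\alpha$ has support of size at most two. Once $W$ is two-valued, the map $t\mapsto h(t)=\E|t+W|$ is piecewise linear with only two kinks, and the paper can finish with a concrete case analysis on the sign-probability $p_Y$ and on the four sign-patterns of $(y_1,y_2)$. Without this extremization, you are trying to prove the pointwise inequality for an arbitrary balanced $W$ using only the two crude bounds $h(t)\le\sqrt{t^2+\sigma^2}$ and $h(t)\le|t|+\E|W|$, and these do not cover all cases. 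Concretely, take $W=\pm a$ each with probability $1/2$, $p=a$, $q=0$: then $s=a<2\sigma$, $\mu s=a^2>\frac38 d^2=\frac38 a^2$, and $\Var|W|=0$, so neither your Jensen regime, your triangle-inequality regime, nor the $\Var|W|$ slack you gesture at covers this point (Jensen gives $(\sqrt2-1)^2a^2\approx0.17a^2<\frac14a^2$, and the triangle estimate is negative). The inequality does hold here (LHS equals $a^2$), but because $h(p)h(q)=a^2$ is strictly below the Jensen upper bound $\sqrt2\,a^2$ --- slack that your argument does not capture. So the ``remaining regime'' is real, nonempty, and not covered by the estimates you propose; the second-order control of $h$ near the origin that you sketch is exactly the kind of work the paper avoids by reducing to two-point laws first, where it becomes a finite case analysis rather than an analytic estimate over all balanced laws.

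In short: your reduction is correct and aligned with the paper's, but the key simplification (Krein--Milman $\Rightarrow$ two-point $W$) is missing, and consequently the core pointwise inequality is left unproven in a regime that you yourself identify but do not close.
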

Note that in this lemma, unlike the former statements discussed so
far, the terms on the right-hand side also appear in \emph{absolute
value}. In particular, this makes the inequality hold with respect
to the \emph{maximum} of the two variances.

We find it of separate interest to note that this lemma is tight in
the sense that it is necessary to take a non-trivial constant $K_{0}>1$:
\begin{claim}
\label{cla: tightness of aux lemma}A non-trivial constant is necessary
for Lemma \ref{lemma: |X+Y| > K*|X+E=00005BY=00005D|}. More precisely,
there exist two independent \emph{balanced} random variables $\overline{X},\overline{Y}$,
such that the following inequality does not hold for any value $K_{0}<4/3$:
\[
{\Var}\left|\overline{X}+\overline{Y}\right|\geq\frac{\max\left\{ {\Var}\left|\overline{X}\right|,{\Var}\left|\overline{Y}\right|\right\} }{K_{0}}.
\]

In particular, it is interesting to note that $K_{0}>1$.
\end{claim}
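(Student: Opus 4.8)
The plan is to prove the claim by exhibiting an explicit pair of balanced (mean‑zero, in fact symmetric) independent random variables $\overline{X},\overline{Y}$ for which the ratio
$\Var|\overline{X}+\overline{Y}|\big/\max\{\Var|\overline{X}|,\Var|\overline{Y}|\}$ equals exactly $3/4$. Since $3/4 < 1/K_0$ for every $K_0 < 4/3$, this instantly shows that the inequality of Lemma~\ref{lemma: |X+Y| > K*|X+E=00005BY=00005D|} cannot hold with any such $K_0$, and in particular rules out $K_0 \le 1$. The real content is not the verification — which is elementary arithmetic — but the observation that adding an independent balanced variable can actually \emph{decrease} the variance of the absolute value: this happens if $\overline{X}$ places half its mass at $0$ and half at magnitude $2$, while $\overline{Y}\in\{\pm1\}$ "splits" the magnitude‑$2$ atom into magnitudes $1$ and $3$ and simultaneously lifts the magnitude‑$0$ atom to magnitude $1$, so that $|\overline{X}+\overline{Y}|$ concentrates most of its mass on the single value $1$.

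Concretely, I would take $\overline{Y}$ uniform on $\{-1,+1\}$ and let $\overline{X}$ equal $-2,0,2$ with probabilities $\tfrac14,\tfrac12,\tfrac14$. Both are balanced, since $\E\overline{X}=\E\overline{Y}=0$ (and both distributions are symmetric about $0$, so the claim holds under any reasonable reading of "balanced"). The first step is to evaluate the right‑hand side: $|\overline{Y}|\equiv1$ gives $\Var|\overline{Y}|=0$, while $|\overline{X}|$ takes the values $0$ and $2$ each with probability $\tfrac12$, so $\E|\overline{X}|=1$, $\E|\overline{X}|^{2}=2$, hence $\Var|\overline{X}|=1$. Therefore $\max\{\Var|\overline{X}|,\Var|\overline{Y}|\}=1$.

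The second step is to compute $\Var|\overline{X}+\overline{Y}|$. By independence, $\overline{X}+\overline{Y}$ takes the values $-3,-1,1,3$ with probabilities $\tfrac18,\tfrac38,\tfrac38,\tfrac18$, so $|\overline{X}+\overline{Y}|$ equals $1$ with probability $\tfrac34$ and $3$ with probability $\tfrac14$. Then $\E|\overline{X}+\overline{Y}|=\tfrac32$ and $\E|\overline{X}+\overline{Y}|^{2}=3$, giving $\Var|\overline{X}+\overline{Y}|=3-\tfrac94=\tfrac34$. Combining the two steps, $\Var|\overline{X}+\overline{Y}|=\tfrac34=\tfrac34\cdot\max\{\Var|\overline{X}|,\Var|\overline{Y}|\}$, so for any $K_0<4/3$ we get $\dfrac{\max\{\Var|\overline{X}|,\Var|\overline{Y}|\}}{K_0}=\dfrac{1}{K_0}>\dfrac34=\Var|\overline{X}+\overline{Y}|$, i.e.\ the asserted inequality fails; letting $K_0$ drop just below $1$ shows in particular that $K_0>1$ is necessary.

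I do not expect a genuine obstacle: the only creative step is finding the example, and the "cancellation of magnitudes" heuristic above pins it down at once; alternatively one could parametrize over three‑atom symmetric $\overline{X}$ and two‑atom symmetric $\overline{Y}$, where the ratio is a simple rational function whose minimum over the relevant range is $3/4$, attained at the configuration above. The one routine point worth stating explicitly is that with $\E\overline{X}=\E\overline{Y}=0$ the right‑hand side of Lemma~\ref{lemma: |X+Y| > K*|X+E=00005BY=00005D|} really does collapse to $\max\{\Var|\overline{X}|,\Var|\overline{Y}|\}$, so that the computation above is exactly the quantity in the claim.
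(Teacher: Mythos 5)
Your proposal is correct and uses the exact same example as the paper ($\overline{X}\in\{-2,0,2\}$ with probabilities $\tfrac14,\tfrac12,\tfrac14$ and $\overline{Y}$ uniform on $\{\pm1\}$), arriving at the same ratio $\Var|\overline{X}+\overline{Y}|=\tfrac34\,\Var|\overline{X}|$. The only difference is that you spell out the arithmetic in more detail; the paper's proof is otherwise identical.
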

Discussion and proof appear in Section \ref{sub:Tightness-of-the-aux lemma}.

$\square$

\section{\label{sec:Proofs}Proofs}

\subsection{From variance of absolute value to variance of absolute value of
sum: proof of Lemma \ref{lemma: |X+Y| > K*|X+E=00005BY=00005D|} }

We begin with the proof of a slightly more general form of Lemma \ref{lemma: |X+Y| > K*|X+E=00005BY=00005D|}:
\begin{lemma}
\label{lemma: var|X+Y+E| > var|X+E|}Let $\overline{X},\overline{Y}$
be any two independent \emph{balanced} random variables, and let
$E$ be any real number. Then for some universal constant $K_{0}\leq4$,
\[
{\Var}\left|\overline{X}+\overline{Y}+E\right|\geq\frac{\max\left\{ {\Var}\left|\overline{X}+E\right|,{\Var}\left|\overline{Y}+E\right|\right\} }{K_{0}}.
\]

\end{lemma}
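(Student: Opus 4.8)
The plan is to prove the inequality one variable at a time. By the symmetry of the statement in $\overline{X}$ and $\overline{Y}$, it suffices to show $\Var|\overline{X}+\overline{Y}+E|\ge\frac{1}{K_{0}}\Var|\overline{X}+E|$ under the extra assumption $\Var|\overline{X}+E|\ge\Var|\overline{Y}+E|$; interchanging the roles of $\overline{X}$ and $\overline{Y}$ then gives the version with the $\max$. I will use two reformulations throughout. First, for any random variable $Z$, writing $Z^{+}=\max\{Z,0\}$, $Z^{-}=\max\{-Z,0\}$ and using $Z^{+}Z^{-}\equiv 0$, one gets the identity $\Var|Z|=\Var Z-4\,\E[Z^{+}]\,\E[Z^{-}]$; combined with $\Var(\overline{X}+\overline{Y}+E)=\Var\overline{X}+\Var\overline{Y}$ (independence, mean zero) this turns the goal into controlling the ``cross term'' $\E[(\overline{X}+\overline{Y}+E)^{+}]\,\E[(\overline{X}+\overline{Y}+E)^{-}]$ in terms of $\Var\overline{X}$, $\Var\overline{Y}$ and of the analogous $\E[(\overline{X}+E)^{+}]\,\E[(\overline{X}+E)^{-}]$. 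Second, by Fact \ref{fac: (z1-z2)} each variance equals half the expected squared difference between two independent copies, which makes pointwise estimates available.

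The easy regime is when $\overline{Y}$ is small, say $\Var\overline{Y}\le\frac{1}{8}\Var|\overline{X}+E|$. Let $w_{1},w_{2}$ be i.i.d.\ copies of $\overline{X}+E$ and $y_{1},y_{2}$ i.i.d.\ copies of $\overline{Y}$, all independent. Fact \ref{fac: (z1-z2)} gives $2\Var|\overline{X}+\overline{Y}+E|=\E[(|w_{1}+y_{1}|-|w_{2}+y_{2}|)^{2}]$. Since $\bigl||w_{i}+y_{i}|-|w_{i}|\bigr|\le|y_{i}|$, and since $(a+b)^{2}\ge\frac{1}{2}a^{2}-b^{2}$ for all reals (because $(a+b)^{2}-\frac{1}{2}a^{2}+b^{2}=\frac{1}{2}(a+2b)^{2}\ge 0$), each summand is at least $\frac{1}{2}(|w_{1}|-|w_{2}|)^{2}-2(y_{1}^{2}+y_{2}^{2})$. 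Taking expectations, and using $\E\overline{Y}^{2}=\Var\overline{Y}$ (here is where balancedness of $\overline{Y}$ is used), gives $\Var|\overline{X}+\overline{Y}+E|\ge\frac{1}{2}\Var|\overline{X}+E|-2\Var\overline{Y}\ge\frac{1}{4}\Var|\overline{X}+E|$, as desired.

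The substantive case is when $\overline{Y}$ has large variance, where the crude triangle bound above is far too lossy (a balanced variable of huge variance can genuinely flatten an absolute value). Here I would instead use $\Var|\overline{X}+\overline{Y}+E|=\Var\overline{X}+\Var\overline{Y}-4\,\E[Z^{+}]\,\E[Z^{-}]$ with $Z=\overline{X}+\overline{Y}+E$ and bound the cross term $\E[Z^{+}]\,\E[Z^{-}]$ directly. The idea is that even if one of $\E[Z^{+}]$, $\E[Z^{-}]$ is large, the other is controlled by how far $\overline{X}+E$ typically sits from $0$ --- because $\overline{Y}$ is balanced, it cannot shift $\overline{X}+E$ past $0$ without paying variance --- and the $+\Var\overline{Y}$ on the right of the identity is exactly what covers the growth of the cross term relative to that of $\overline{X}+E$; at this point one also uses the standing hypothesis $\Var|\overline{Y}+E|\le\Var|\overline{X}+E|$ to keep the comparison on the correct side. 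I expect this to be the main obstacle: making it quantitative appears to require splitting into a few sub-cases according to the relative sizes of $E^{2}$, $\Var\overline{X}$, $\Var\overline{Y}$ and of $\E|\overline{X}|$, $\E|\overline{Y}|$, $\E|\overline{X}+E|$, $\E|\overline{Y}+E|$, followed by an elementary but tedious optimization in each. The slack introduced here is presumably what separates the constant $K_{0}=4$ obtained from the optimal value $\tfrac{4}{3}$ of Claim \ref{cla: tightness of aux lemma}.
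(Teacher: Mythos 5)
Your first regime is correct and cleanly argued: when $\Var\overline{Y}\leq\frac{1}{8}\Var\left|\overline{X}+E\right|$, the bound $\bigl|\,|w+y|-|w|\,\bigr|\leq|y|$ together with $(a+b)^{2}\geq\frac{1}{2}a^{2}-b^{2}$ and Fact \ref{fac: (z1-z2)} does give $\Var\left|\overline{X}+\overline{Y}+E\right|\geq\frac{1}{4}\Var\left|\overline{X}+E\right|$, and the identity $\Var|Z|=\Var Z-4\,\E\left[Z^{+}\right]\E\left[Z^{-}\right]$ you propose for the other regime is also valid. But the complementary regime is precisely the heart of the lemma, and there you give only a heuristic ("the $+\Var\overline{Y}$ covers the growth of the cross term'') and explicitly defer the quantitative work to unspecified sub-cases. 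That is a genuine gap, not a routine omission: the whole point of the statement is that adding a balanced $\overline{Y}$ of \emph{arbitrarily large} variance (e.g.\ a two-point variable taking a huge value with tiny probability $p_{Y}$ and a moderate negative value otherwise) cannot collapse $\Var\left|\overline{X}+\overline{Y}+E\right|$ below a constant fraction of $\Var\left|\overline{X}+E\right|$, and in that range neither the triangle-type bound nor any obvious comparison of $\E\left[Z^{+}\right]\E\left[Z^{-}\right]$ with $\E\left[(\overline{X}+E)^{+}\right]\E\left[(\overline{X}+E)^{-}\right]$ is available without a real argument; the standing hypothesis $\Var\left|\overline{X}+E\right|\geq\Var\left|\overline{Y}+E\right|$ does not by itself control these cross terms (and in fact the inequality holds without any such ordering). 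As written, the proposal proves the lemma only when $\Var\overline{Y}=O\bigl(\Var\left|\overline{X}+E\right|\bigr)$.

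For comparison, the paper avoids any split on the size of $\Var\overline{Y}$: it first reduces, via a mixture (Krein--Milman) decomposition of $\overline{Y}$ and convexity of the map $Y\mapsto\E\bigl(\left|\overline{X}+Y+E\right|-c\bigr)^{2}$, to the case where $\overline{Y}$ is a balanced two-point variable with values $\bigl\{\frac{d}{p_{Y}},\frac{-d}{1-p_{Y}}\bigr\}$, and then proves the pointwise statement of Claim \ref{cla:For-every x,x,y,y}: for any two evaluations $x_{1},x_{2}$ of $\overline{X}+E$,
\[
\E_{y_{1},y_{2}}\bigl(\left|x_{1}+y_{1}\right|-\left|x_{2}+y_{2}\right|\bigr)^{2}\geq\tfrac{1}{4}\bigl(\left|x_{1}\right|-\left|x_{2}\right|\bigr)^{2},
\]
by a four-way case analysis on $p_{Y}$ (and on whether $x_{1}$ exceeds $\frac{2d}{1-p_{Y}}$); integrating over $x_{1},x_{2}$ with Fact \ref{fac: (z1-z2)} then yields $K_{0}=4$ uniformly, including exactly the large-$\Var\overline{Y}$ regime your sketch leaves open. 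If you want to salvage your route, the missing piece is a quantitative substitute for that claim in the regime $\Var\overline{Y}\gg\Var\left|\overline{X}+E\right|$; the two-point reduction is a natural way to make your "balanced $\overline{Y}$ cannot shift past $0$ without paying'' intuition precise, and without something of that kind the proof is incomplete.
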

Lemma \ref{lemma: |X+Y| > K*|X+E=00005BY=00005D|} follows easily by
taking $E={\E}\left[X+Y\right]$ and $\overline{X}=X-\E X$ and $\overline{Y}=Y-\E Y$.
\begin{proof}
This lemma is relatively easy to prove partly because the right-hand
side contains the maximum of the two variances. Thus, it suffices
to prove separately that the left-hand side is greater or equal to
$\frac{{\Var}\left|\overline{X}+E\right|}{K_{0}}$ and to $\frac{{\Var}\left|\overline{Y}+E\right|}{K_{0}}$.
Without loss of generality we will prove:
\begin{equation}
{\Var}\left|\overline{X}+\overline{Y}+E\right|\geq\frac{{\Var}\left|\overline{X}+E\right|}{K_{0}}.\label{eq: var|X+Y+E| > var|X+E|}
\end{equation}
Separating into two inequalities is particularly helpful, because
now $\overline{Y}$ no longer appears in the right-hand side. 

Our next step is to reduce to the special case where $\overline{Y}$
is a balanced variable with only two values in its support. Every
balanced variable can be written as a mixture of balanced random variables
$\left\{ \overline{Y_{\alpha}}\right\} $, each with support at most
two; this follows by applying the Krein-Milman theorem to the space
of balanced random variables. Now use the convexity of the variance
to get:
\begin{eqnarray*}
{\Var}\left|\overline{X}+\overline{Y}+E\right| & = & {\E}\left(\left|\overline{X}+\overline{Y}+E\right|-{\E}\left|\overline{X}+\overline{Y}+E\right|\right)^{2}\\
 & = & {\E}_{\alpha}\left[{\E}\left(\left|\overline{X}+\overline{Y_{\alpha}}+E\right|-{\E}\left|\overline{X}+\overline{Y}+E\right|\right)^{2}\right]\\
 & \geq & {\E}_{\alpha}\left[{\E}\left(\left|\overline{X}+\overline{Y_{\alpha}}+E\right|-{\E}\left|\overline{X}+\overline{Y_{\alpha}}+E\right|\right)^{2}\right]\\
 & = & {\E}_{\alpha}\left[{\Var}\left|\overline{X}+\overline{Y_{\alpha}}+E\right|\right].
\end{eqnarray*}

Thus ${\Var}\left|\overline{X}+\overline{Y}+E\right|$ is in particular
greater or equal to ${\Var}\left|\overline{X}+\overline{Y}_{\alpha}+E\right|$
for some $\alpha$. Therefore in order to prove Lemma \ref{lemma: var|X+Y+E| > var|X+E|},
it suffices to prove the lower bound ${\Var}\left|\overline{X}+\overline{Y_{\alpha}}+E\right|$
(with respect to ${\Var}\left|\overline{X}+E\right|$) for every balanced
$\overline{Y_{\alpha}}$ with only two possible values. 

Recall (Fact \ref{fac: (z1-z2)}) that we can express the variances
of $\left|\overline{X}+\overline{Y_{\alpha}}+E\right|$ and $\left|\overline{X}+E\right|$
in terms of the expected squared distance between two random evaluations.
We use a simple case analysis to prove that adding any balanced $\overline{Y_{\alpha}}$
with support of size two preserves (up to a factor of $\frac{1}{4}$)
the expected squared distance between any two possible evaluations
of $\left|\overline{X}+E\right|$. 
\begin{claim}
\label{cla:For-every x,x,y,y}For every two possible evaluations $x_{1},x_{2}$
in the support of $\left(\overline{X}+E\right)$,
\[
{\E}_{\left(y_{1},y_{2}\right)\sim\overline{Y_{\alpha}}\times\overline{Y_{\alpha}}}\left(\left|x_{1}+y_{1}\right|-\left|x_{2}+y_{2}\right|\right)^{2}\geq\frac{1}{4}\left(\left|x_{1}\right|-\left|x_{2}\right|\right)^{2}.
\]

\end{claim}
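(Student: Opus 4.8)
The plan is to fix the two points $x_{1},x_{2}$ in the support of $\overline{X}+E$ (which below we may as well treat as arbitrary reals), assume without loss of generality that $\left|x_{1}\right|\geq\left|x_{2}\right|$, and set $m:=\left|x_{1}\right|-\left|x_{2}\right|\geq0$, so the goal becomes $\E_{\left(y_{1},y_{2}\right)}\left(\left|x_{1}+y_{1}\right|-\left|x_{2}+y_{2}\right|\right)^{2}\geq\frac{1}{4}m^{2}$. First I would dispose of the degenerate case where $\overline{Y_{\alpha}}$ has support of size at most one: being balanced it must then be the constant $0$, and the left-hand side equals $m^{2}$ exactly. Otherwise $\overline{Y_{\alpha}}$ is supported on two points of opposite sign, which we write as $b>0$ and $-a<0$; the balance condition then forces $\Pr\left[\overline{Y_{\alpha}}=b\right]=\frac{a}{a+b}$ and $\Pr\left[\overline{Y_{\alpha}}=-a\right]=\frac{b}{a+b}$.

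The key first step is to exploit the independence of $y_{1}$ and $y_{2}$ to split the quantity we are bounding into three non-negative pieces,
\[
\E_{\left(y_{1},y_{2}\right)}\left(\left|x_{1}+y_{1}\right|-\left|x_{2}+y_{2}\right|\right)^{2}=v\left(x_{1}\right)+v\left(x_{2}\right)+\left(\mu\left(x_{1}\right)-\mu\left(x_{2}\right)\right)^{2},
\]
where $\mu\left(x\right):=\E\left|x+\overline{Y_{\alpha}}\right|$ and $v\left(x\right):=\Var\left|x+\overline{Y_{\alpha}}\right|$. By Jensen's inequality (convexity of $\left|\cdot\right|$ together with $\E\overline{Y_{\alpha}}=0$) we have $\mu\left(x\right)\geq\left|x\right|$, so $J\left(x\right):=\mu\left(x\right)-\left|x\right|\geq0$; a direct computation also shows that $J\left(x\right)=0$ precisely when $x\notin\left(-b,a\right)$, i.e.\ exactly on the range where $x+\overline{Y_{\alpha}}$ never changes sign, and it yields closed-form piecewise expressions for $\mu\left(x\right)$ and $v\left(x\right)$ on the four intervals cut out by $-b$, $0$, and $a$ (using, e.g., the identity $\Var Z=\left(\E Z-\ell\right)\left(r-\E Z\right)$ valid for any $Z$ taking only two values $\ell\leq r$).

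Next I would split on the size of $J\left(x_{2}\right)$ relative to $m$. If $J\left(x_{2}\right)\leq\frac{m}{2}$, then since $J\left(x_{1}\right)\geq0$,
\[
\mu\left(x_{1}\right)-\mu\left(x_{2}\right)=\left(\left|x_{1}\right|+J\left(x_{1}\right)\right)-\left(\left|x_{2}\right|+J\left(x_{2}\right)\right)\geq m-\frac{m}{2}=\frac{m}{2}\geq0,
\]
so the third term alone already gives $\left(\mu\left(x_{1}\right)-\mu\left(x_{2}\right)\right)^{2}\geq\frac{1}{4}m^{2}$. If instead $J\left(x_{2}\right)>\frac{m}{2}$, then $x_{2}$ lies in the sign-changing interval $\left(-b,a\right)$, and here one genuinely needs all three terms. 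Using the symmetry of the statement under $\left(x_{1},x_{2},\overline{Y_{\alpha}}\right)\mapsto\left(-x_{1},-x_{2},-\overline{Y_{\alpha}}\right)$ (which swaps $a\leftrightarrow b$) one may assume in addition $x_{2}\geq0$, and then it is a finite case analysis over the location of $x_{1}$ to verify $v\left(x_{1}\right)+v\left(x_{2}\right)+\left(\mu\left(x_{1}\right)-\mu\left(x_{2}\right)\right)^{2}\geq\frac{1}{4}m^{2}$ from the explicit formulas together with the constraint $\left|x_{1}\right|=\left|x_{2}\right|+m$. I expect this last case to be the main obstacle: unlike the easy case, the position and sign of $x_{1}$ (not just of $x_{2}$) must be tracked, several sub-cases arise, and the factor must be extracted without any loss, since the resulting constant $\frac{1}{4}$ is exactly the reciprocal of the $K_{0}\leq4$ of Lemma \ref{lemma: var|X+Y+E| > var|X+E|} and is tight by Claim \ref{cla: tightness of aux lemma}.
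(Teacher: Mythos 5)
Your opening moves are correct and take a genuinely different route from the paper. The identity
\[
\E_{(y_{1},y_{2})}\left(\left|x_{1}+y_{1}\right|-\left|x_{2}+y_{2}\right|\right)^{2}=v\left(x_{1}\right)+v\left(x_{2}\right)+\left(\mu\left(x_{1}\right)-\mu\left(x_{2}\right)\right)^{2}
\]
is a valid consequence of independence, the parametrization $\overline{Y_{\alpha}}\in\{b,-a\}$ with weights $a/(a+b)$ and $b/(a+b)$ is right, the Jensen gap $J(x)=\mu(x)-|x|\geq0$ vanishing exactly off $(-b,a)$ is right, and the disposal of the sub-case $J(x_{2})\leq m/2$ via the cross term is clean and correct. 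This is more structural than the paper's argument, which instead fixes a parameter $p_{Y}=\Pr[\overline{Y_{\alpha}}\geq0]$, splits into four ranges ($p_{Y}\geq\tfrac12$; $\tfrac14\leq p_{Y}<\tfrac12$; $p_{Y}<\tfrac14$ with $x_{1}$ small; $p_{Y}<\tfrac14$ with $x_{1}$ large), and in each case discards all but one or a few sign-patterns of $(y_{1},y_{2})$. Your decomposition keeps all the mass and isolates the mean-shift contribution from the variance contributions, which is a real conceptual gain.

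However, there is a genuine gap: the sub-case $J(x_{2})>m/2$ is never actually proved. You reduce it to ``a finite case analysis over the location of $x_{1}$'' and even flag it yourself as the main obstacle, but you give neither the explicit piecewise formulas for $\mu$ and $v$ nor the verification that $v(x_{1})+v(x_{2})+(\mu(x_{1})-\mu(x_{2}))^{2}\geq m^{2}/4$ in each sub-region. This is not a routine omission: that sub-case is precisely where the claim has content (it corresponds roughly to the paper's regimes where $x_{1}$ is comparable to the scale of $\overline{Y_{\alpha}}$, i.e.\ cases~3 and~4, which are also the longest in the paper), the terms $v(x_{2})$ and $(\mu(x_{1})-\mu(x_{2}))^{2}$ can individually vanish there (e.g.\ $v(x_{2})=0$ at $x_{2}=(a-b)/2$), so one must genuinely balance all three terms, and the target constant $\tfrac14$ leaves essentially no slack. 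Until that case analysis is written out and checked, the proposal is a promising plan rather than a proof.
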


The proof appears in Section \ref{cla:For-every x,x,y,y}.

$\square$

Finally, in order to achieve the bound on the variances (inequality
\eqref{eq: var|X+Y+E| > var|X+E|}), take the expectation over all
choices of $\left(x_{1},x_{2}\right)\sim\left(\overline{X}+E\right)\times\left(\overline{X}+E\right)$:
\begin{eqnarray*}
{\Var}\left|\overline{X}+\overline{Y_{\alpha}}+E\right| & = & \frac{1}{2}{\E}_{x_{1},x_{2},y_{1},y_{2}}\left(\left|x_{1}+y_{1}\right|-\left|x_{2}+y_{2}\right|\right)^{2}\\
 & \geq & \frac{1}{8}{\E}_{x_{1},x_{2}}\left(\left|x_{1}\right|-\left|x_{2}\right|\right)^{2}\\
 & = & \frac{1}{4}\cdot{\Var}\left|\overline{X}+E\right|.
\end{eqnarray*}
(Where the two equalities follow by Fact \ref{fac: (z1-z2)}, and
the inequality by Claim \ref{cla:For-every x,x,y,y}.)
\end{proof}

\subsection{\label{sec: proof Var|X+Y+c| > K * VarX * VarY}From variance of
absolute value of sum to variance: proof of Lemma \ref{lemma: Var|X+Y+c| > K * VarX * VarY}}

We advance to the more interesting Lemma \ref{lemma: Var|X+Y+c| > K * VarX * VarY},
where we bound the variance of $\left|X+Y\right|$ with respect to
the minimum of ${\Var}X$ and ${\Var}Y$. Intuitively, in the expression
on the left-hand side of \eqref{eq:var|x+y| > k varx, vary} we consider
the \emph{sum} of two independent variables, which we may expect
to variate more than each variable separately. Per contra, the same
side of \eqref{eq:var|x+y| > k varx, vary} also has the variance
of the \emph{absolute value}, which is in general smaller than just
the variance (without absolute value). We will now bound this loss
of variance.
\begin{lemma*}
\textup{(Lemma \ref{lemma: Var|X+Y+c| > K * VarX * VarY})} Let $X,Y$
be any two independent random variables, and let $E$ and $V$ be
the expectation and variance of their sum, respectively. Then for
some universal constant $K_{1}\leq20480$,
\begin{equation}
{\Var}\left|X+Y\right|\geq\frac{V\cdot\min\left\{ {\Var}X,{\Var}Y\right\} }{K_{1}\left(V+E^{2}\right)}.\label{eq:var|x+y| > k varx, vary}
\end{equation}
\end{lemma*}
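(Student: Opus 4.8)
\textbf{Proof plan for Lemma \ref{lemma: Var|X+Y+c| > K * VarX * VarY}.}
The plan is to follow the case split announced in Section \ref{sec:High-Level-Outline}, measured by how much variance the absolute values $\left|X+{\E}Y\right|$ and $\left|Y+{\E}X\right|$ retain relative to the raw variances ${\Var}X$ and ${\Var}Y$. Fix a threshold parameter $\tau$ (a small absolute constant, to be chosen at the end to balance the two cases). Without loss of generality assume ${\Var}X\leq{\Var}Y$, so the right-hand side involves ${\Var}X$. First I would handle the \emph{easy case}: if $\max\left\{{\Var}\left|X+{\E}Y\right|,{\Var}\left|Y+{\E}X\right|\right\}\geq\tau\cdot\dfrac{V\cdot{\Var}X}{V+E^{2}}$, then Lemma \ref{lemma: |X+Y| > K*|X+E=00005BY=00005D|} immediately gives
\[
{\Var}\left|X+Y\right|\;\geq\;\frac{1}{K_{0}}\cdot\tau\cdot\frac{V\cdot{\Var}X}{V+E^{2}},
\]
which is the desired bound with constant $K_{1}\geq K_{0}/\tau$.

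The remaining, harder case is when \emph{both} $\left|X+{\E}Y\right|$ and $\left|Y+{\E}X\right|$ have small variance, i.e.\ are each at most $\tau\cdot\dfrac{V\cdot{\Var}X}{V+E^{2}}$ in variance. The idea here is that a random variable whose \emph{absolute value} has tiny variance is close (in $L^2$-squared) to a random variable supported on $\left\{-a,+a\right\}$ for a single $a\geq0$: write $X+{\E}Y = S_X\cdot A_X$ with $S_X=\mathrm{sign}$, $A_X=\left|X+{\E}Y\right|$; replacing $A_X$ by its mean $a_X={\E}A_X$ changes $X+{\E}Y$ by an $L^2$-squared amount equal to ${\Var}A_X={\Var}\left|X+{\E}Y\right|$, which is small by assumption (and similarly for $Y$). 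So up to a controlled error we may assume $X+{\E}Y$ takes values in $\left\{-a_X,a_X\right\}$ with some bias $p$, and $Y+{\E}X$ in $\left\{-a_Y,a_Y\right\}$ with bias $q$; note $a_X^2\gtrsim{\Var}X$, $a_Y^2\gtrsim{\Var}Y$, and $V\asymp {\Var}X+{\Var}Y$. Now $X+Y = (X+{\E}Y)+(Y+{\E}X)-E$, so $\left|X+Y\right|$ is, up to the small substitution error, the absolute value of a sum of two independent signed-constant variables shifted by $-E$, a quantity taking at most four values $\left|\pm a_X\pm a_Y - E\right|$ with the four product probabilities. I would then lower bound ${\Var}\left|X+Y\right|$ by a direct finite case analysis on these four-atom distributions (this is exactly the ``case analysis'' of bullet 1 in Section \ref{sec:High-Level-Outline}): showing that unless one of $a_X,a_Y$ is negligible relative to the sum, the four values $\left|\pm a_X\pm a_Y-E\right|$ cannot be nearly concentrated, and quantitatively that their variance is $\gtrsim \dfrac{V\cdot\min\{a_X^2,a_Y^2\}}{V+E^2}\gtrsim \dfrac{V\cdot{\Var}X}{V+E^2}$. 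Finally I would use Fact \ref{fac: relaxed triangle ineq} (the $2$-relaxed triangle inequality, and its consequence Fact \ref{fac: L2 norm and var}) to transfer this bound on the variance of the four-atom absolute value back to ${\Var}\left|X+Y\right|$, paying only the small substitution errors, which by the case hypothesis are each a $\tau$-fraction of the target and so can be absorbed with a factor loss into $K_1$.

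\textbf{Main obstacle.} The crux is the quantitative four-atom computation in the hard case: I expect the bulk of the work (and the source of the large constant $K_1\leq 20480$) to be verifying that $\left|\pm a_X \pm a_Y - E\right|$, weighted by $p(1-q),\,pq,\,(1-p)q,\,(1-p)(1-q)$, has variance at least a constant times $\tfrac{V\cdot\min\{a_X^2,a_Y^2\}}{V+E^2}$, uniformly over the biases $p,q\in[0,1]$ and all $a_X,a_Y,E$. The awkward sub-case is when $E$ is tuned to nearly collapse two of the four absolute values (e.g.\ $E\approx a_X$ making $\left|a_X-E\right|$ and $\left|-a_X+E\right|$ — wait, those are equal anyway — rather $E\approx a_Y$ collapsing $\left|a_X+a_Y-E\right|$ against $\left|a_X-a_Y+E\right|$), together with extreme bias; here one must exploit that the \emph{other} pair of atoms then has non-trivial weight and is well-separated, and that the normalization by $V+E^2$ is precisely what makes the bound survive large $E$. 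Managing this case-split cleanly, and choosing $\tau$ to balance $K_0/\tau$ against the four-atom constant, is where the real care goes; the substitution-error bookkeeping via Facts \ref{fac: relaxed triangle ineq}--\ref{fac: L2 norm and var} is routine by comparison. I would likely defer the most tedious parts of this case analysis to Section \ref{sec:Proofs-of-technical}, as the paper's organization suggests.
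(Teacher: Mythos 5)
You follow the same route as the paper: split according to whether $\max\left\{ {\Var}\left|X+{\E}Y\right|,{\Var}\left|Y+{\E}X\right|\right\}$ exceeds a threshold proportional to $\frac{V\cdot\min\left\{ {\Var}X,{\Var}Y\right\} }{V+E^{2}}$, dispose of the large case with Lemma \ref{lemma: |X+Y| > K*|X+E=00005BY=00005D|}, and in the small case replace each summand by its sign times the mean of its absolute value and analyze the resulting four-atom distribution, transferring the bound back via Facts \ref{fac: relaxed triangle ineq} and \ref{fac: L2 norm and var}. The gap is in the quantitative target you set for the deferred four-atom analysis: the claim that the variance of $\left|\pm a_{X}\pm a_{Y}-E\right|$ under the product weights is at least a constant times $\frac{V\cdot\min\left\{ a_{X}^{2},a_{Y}^{2}\right\} }{V+E^{2}}$ is false, because it ignores the biases $p,q$. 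Concretely, let $W=\pm1$ with $\Pr\left[W=1\right]=1-\delta$, take $\overline{X},\overline{Y}$ to be independent centered copies of $W$, and shift so that $E={\E}\left[X+Y\right]=1-2\delta$; then $X+{\E}Y$ and $Y+{\E}X$ are exact independent copies of $W$, so you are squarely in your hard case (both absolute-value variances vanish, so there is no substitution error), with $a_{X}=a_{Y}=1$, $p=q=1-\delta$, $V=8\delta\left(1-\delta\right)$. Here $\left|X+Y\right|$ takes the values $1+2\delta$, $1-2\delta$, $3-2\delta$ with probabilities $\left(1-\delta\right)^{2}$, $2\delta\left(1-\delta\right)$, $\delta^{2}$, so ${\Var}\left|X+Y\right|=\Theta\left(\delta^{2}\right)$, whereas your claimed lower bound is $\frac{V\cdot1}{V+E^{2}}=\Theta\left(\delta\right)$. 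With extreme biases the four-atom \emph{distribution} can be nearly degenerate even though the four \emph{values} are well spread, so ``the values cannot be nearly concentrated'' is the wrong invariant.

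The fix is to prove the weaker (and true) bound that the paper isolates as Claim \ref{cla: const  | |}: with $X'=\mathrm{sign}\left(\overline{X}+E\right)\cdot{\E}\left|\overline{X}+E\right|$ and $Y'$ analogous, one has ${\Var}\left|X'+Y'-E\right|\geq\frac{{\Var}\left(X'-E\right){\Var}\left(Y'-E\right)}{16\left({\Var}\overline{X}+E^{2}\right)}$, where ${\Var}X'=4p\left(1-p\right)a_{X}^{2}$ and ${\Var}Y'=4q\left(1-q\right)a_{Y}^{2}$ carry exactly the bias factors your bound drops. The numerator $V\cdot\min\left\{ {\Var}X,{\Var}Y\right\}$ is then recovered not atom-by-atom but from the product of variances, via ${\Var}X\cdot{\Var}Y\geq\frac{V}{2}\min\left\{ {\Var}X,{\Var}Y\right\}$ (since $\max\left\{ {\Var}X,{\Var}Y\right\} \geq V/2$), after using the closeness of ${\Var}\left(X'-E\right),{\Var}\left(Y'-E\right)$ to ${\Var}X,{\Var}Y$. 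This corrected chain is consistent with the example above, which gives $\Theta\left(\delta^{2}\right)$ on both sides, and with the final statement of the lemma. The rest of your plan (the threshold built from $\frac{V}{V+E^{2}}$, the substitution-error bookkeeping, and balancing the constant against $K_{0}$) matches the paper's proof and is sound once the four-atom target is replaced by the product-of-variances bound.
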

\begin{proof}
We change variables by subtracting the expectation of $X$ and $Y$,
\begin{eqnarray*}
\overline{X} & = & X-{\E}X\\
\overline{Y} & = & Y-{\E}Y.
\end{eqnarray*}
Note that the new variables $\overline{X},\overline{Y}$ are \emph{balanced}.
Also observe that we are now interested in showing a lower bound for
\[
{\Var}\left|X+Y\right|={\Var}\left|\overline{X}+\overline{Y}+E\right|.
\]

On a high level, the main idea of the proof is separation to two cases,
depending on the \emph{variance of the absolute value} of the random
variables, relative to the original variance of the variables (without
absolute value):
\begin{enumerate}
\item If either $\left|\overline{X}+E\right|$ or $\left|\overline{Y}+E\right|$
has a relatively large variance, we can simply apply Lemma \ref{lemma: var|X+Y+E| > var|X+E|}
that states that the variance of the absolute value of the sum, ${\Var}\left|\overline{X}+\overline{Y}+E\right|$,
is not much smaller than the variance of the \emph{absolute value}
of either variable (${\Var}\left|\overline{X}+E\right|$ and ${\Var}\left|\overline{Y}+E\right|$).
\item If both $\left|\overline{X}+E\right|$ and $\left|\overline{Y}+E\right|$
have relatively small variance, then $\overline{X}+E$ and $\overline{Y}+E$
can be both approximated by random variables with \emph{constant
absolute values}, (i.e. random variables with supports $\left\{ \pm d_{X}\right\} $
and $\left\{ \pm d_{Y}\right\} $ for some reals $d_{X}$ and $d_{Y}$,
respectively). For this case we prove the result by case analysis.
\end{enumerate}
Formally, let $0<a<1/10$ be some parameter to be determined later,
and denote 
\begin{equation}
M_{XY}=\min\left\{ {\Var}\overline{X},{\Var}\overline{Y}\right\} =\min\left\{ {\Var}X,{\Var}Y\right\} .\label{eq:M_XY-def}
\end{equation}

\begin{enumerate}
\item If either of the variances of the absolute values is large, i.e.
\begin{eqnarray*}
\max\left\{ {\Var}\left|\overline{X}+E\right|,{\Var}\left|\overline{Y}+E\right|\right\}  & \geq & a\cdot M_{XY},
\end{eqnarray*}
then we can simply apply Lemma \ref{lemma: var|X+Y+E| > var|X+E|} to
obtain: 
\[
{\Var}\left|\overline{X}+\overline{Y}+E\right|\geq\frac{\max\left\{ {\Var}\left|\overline{X}+E\right|,{\Var}\left|\overline{Y}+E\right|\right\} }{K_{0}}\geq\frac{a\cdot M_{XY}}{K_{0}}.
\]

\item On the other hand, if both the variances of the absolute values are
small, i.e.
\begin{eqnarray}
\max\left\{ {\Var}\left|\overline{X}+E\right|,{\Var}\left|\overline{Y}+E\right|\right\}  & < & a\cdot M_{XY},\label{var|X+E| < var X}
\end{eqnarray}
then $\overline{X}+E$ and $\overline{Y}+E$ are almost constant \emph{in
absolute value}.

In particular, let the variables $X'$ and $Y'$ be the constant-absolute-value
approximations to $\overline{X}+E$ and $\overline{Y}+E$, respectively:
\begin{eqnarray*}
X' & = & \mbox{sign}\left(\overline{X}+E\right)\cdot{\E}\left|\overline{X}+E\right|\\
Y' & = & \mbox{sign}\left(\overline{Y}+E\right)\cdot{\E}\left|\overline{Y}+E\right|.
\end{eqnarray*}
From the precondition \eqref{var|X+E| < var X} it follows that $\left(\overline{X}+E\right)$
and $\left(\overline{Y}+E\right)$ are close to $X'$ and $Y'$, respectively:
\begin{eqnarray*}
\left\Vert \overline{X}-\left(X'-E\right)\right\Vert _{2}^{2} & < & a\cdot M_{XY}\\
\left\Vert \overline{Y}-\left(Y'-E\right)\right\Vert _{2}^{2} & < & a\cdot M_{XY}.
\end{eqnarray*}
In particular, by the $2$-relaxed triangle inequality (Facts \ref{fac: relaxed triangle ineq}
and \ref{fac: L2 norm and var}) we have that the following variances
are close:
\begin{eqnarray}
{\Var}\left(X'-E\right) & > & \frac{1}{2}{\Var}\overline{X}-a\cdot{\Var}\overline{X}\label{eq:var x is close}\\
{\Var}\left(Y'-E\right) & > & \frac{1}{2}{\Var}\overline{Y}-a\cdot{\Var}\overline{Y}\label{eq:var y is close}\\
{\Var}\left|\overline{X}+\overline{Y}+E\right| & \geq & \frac{1}{2}{\Var}\left|Y'+X'-E\right|-\left\Vert \left|\overline{X}+\overline{Y}+E\right|-\left|Y'+X'-E\right|\right\Vert _{2}^{2}\nonumber \\
 & > & \frac{1}{2}{\Var}\left|Y'+X'-E\right|-4a\cdot M_{XY}.\label{eq:var x+y is close}
\end{eqnarray}

Hence, it will be useful to obtain a bound equivalent to \eqref{eq:var|x+y| > k varx, vary},
but in terms of the approximating variables, $X',Y'$. We will then
use the similarity of the variances to extend the bound to $\overline{X},\overline{Y}$
and complete the proof of the lemma.

We use case analysis over the possible evaluations of $X'$ and $Y'$
to prove the following claim:
\begin{claim}
\label{cla: const  | |}Let $\overline{X},\overline{Y}$ be balanced
random variables and let $X',Y'$ be the constant-absolute-value approximations
of $\overline{X},\overline{Y}$, respectively:
\begin{eqnarray*}
X' & = & \mbox{sign}\left(\overline{X}+E\right)\cdot{\E}\left|\overline{X}+E\right|\\
Y' & = & \mbox{sign}\left(\overline{Y}+E\right)\cdot{\E}\left|\overline{Y}+E\right|.
\end{eqnarray*}

Then the variance of the \emph{absolute value} of $X'+Y'-E$ is
bounded by:
\[
{\Var}\left|X'+Y'-E\right|\geq\frac{{\Var}\left(X'-E\right){\Var}\left(Y'-E\right)}{16\left({\Var}\overline{X}+E^{2}\right)}.
\]

\end{claim}

The proof appears in Section \ref{sub:Proof-of-analisys for const | |}.$\square$

Now we use the closeness of the approximating variables $X',Y'$ to
recover a bound for the balanced variables $\overline{X},\overline{Y}$:
\begin{eqnarray*}
{\Var}\left|\overline{X}+\overline{Y}+E\right| & \geq & \frac{1}{2}{\Var}\left|X'+Y'-E\right|-4a\cdot M_{XY}\\
 & \geq & \frac{{\Var}\left(X'-E\right){\Var}\left(Y'-E\right)}{32\left(V+E^{2}\right)}-4a\cdot M_{XY}\\
 & \geq & \frac{\left(1-2a\right)^{2}}{128}\frac{{\Var}\overline{X}\cdot{\Var}\overline{Y}}{V+E^{2}}-4a\cdot M_{XY}\\
 & \geq & M_{XY}\cdot\left[\frac{\left(1-2a\right)^{2}}{128}\frac{\max\left\{ {\Var}\overline{X},{\Var}\overline{Y}\right\} }{V+E^{2}}-4a\right]\\
 & \geq & M_{XY}\cdot\left[\frac{1-4a}{256}\frac{V}{V+E^{2}}-4a\right]\\
 & \geq & M_{XY}\cdot\left[\frac{1}{256}\frac{V}{V+E^{2}}-5a\right].
\end{eqnarray*}

(Where the first line follows by equation \eqref{eq:var x+y is close};
the second line from Claim \ref{cla: const  | |}; the third from
\eqref{eq:var x is close} and \eqref{eq:var y is close}; the fourth
from the definition of $M_{XY}$ \eqref{eq:M_XY-def}; and the fifth
is true because $\left(1-2a\right)^{2}\geq1-4a$ and $V\leq\max\left\{ {\Var}\overline{X},{\Var}\overline{Y}\right\} /2$.)

\end{enumerate}
Combining the two cases, we have that
\[
{\Var}\left|\overline{X}+\overline{Y}+E\right|\geq M_{XY}\cdot\min\left\{ \frac{a}{K_{0}},\,\,\frac{1}{256}\frac{V}{V+E^{2}}-5a\right\} .
\]
Finally, we set $a=\frac{1}{2560}\cdot\frac{V}{V+E^{2}}$. Then,
\begin{eqnarray*}
{\Var}\left|\overline{X}+\overline{Y}+E\right| & \geq & \frac{1}{5120K_{0}}\cdot\frac{V\cdot M_{XY}}{V+E^{2}}
\end{eqnarray*}
and thus \eqref{eq: lemma statement} holds for $K_{1}=5120K_{0}\leq20480$.
\end{proof}

\subsection{\label{sub:main-thm}Proof of the main theorem}

Lemma \ref{lemma: Var|X+Y+c| > K * VarX * VarY} bounds the variance
of the absolute value of the sum of two independent variables, ${\Var}\left|X+Y\right|$,
in terms of the variance of each variable. In the following theorem
we generalize this claim to a sequence of $n$ independent variables.
\begin{theorem*}
\textup{(Theorem \ref{sum of sequence})} Let $\left(X_{i}\right)_{i=1}^{n}$
be a sequence of independent (not necessarily symmetric) random variables,
and let $E$ and $V$ be the expectation and variance of their sum,
respectively. Then for some universal constant $K_{2}\leq61440$ we
have that there exists $k\in\left[n\right]$ such that, 
\[
{\Var}\left|\sum_{i}X_{i}\right|\geq\frac{V\cdot{\Var}\sum_{i\neq k}X_{i}}{K_{2}\left(V+E^{2}\right)}.
\]
\end{theorem*}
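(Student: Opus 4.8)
The plan is to reduce the $n$-variable statement to the two-variable case (Lemma~\ref{lemma: Var|X+Y+c| > K * VarX * VarY}) by an appropriate partition of the index set $[n]$ into two groups. Fix $k\in[n]$ to be the index maximizing ${\Var}X_i$, so that ${\Var}X_k\geq V/n$ and in particular ${\Var}X_k\geq\frac12\big({\Var}X_k+{\Var}\sum_{i\neq k}X_i\big)$ only if ${\Var}X_k\geq{\Var}\sum_{i\neq k}X_i$. In fact the cleanest choice is to let $k$ be the index with the \emph{largest} variance and set $X=X_k$, $Y=\sum_{i\neq k}X_i$; then $X$ and $Y$ are independent, their sum has expectation $E$ and variance $V$, and $\min\{{\Var}X,{\Var}Y\}={\Var}\sum_{i\neq k}X_i$ \emph{provided} ${\Var}X_k\geq{\Var}\sum_{i\neq k}X_i$. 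Applying Lemma~\ref{lemma: Var|X+Y+c| > K * VarX * VarY} to this pair gives directly
\[
{\Var}\left|\sum_i X_i\right|={\Var}\left|X+Y\right|\geq\frac{V\cdot\min\{{\Var}X,{\Var}Y\}}{K_1(V+E^2)}=\frac{V\cdot{\Var}\sum_{i\neq k}X_i}{K_1(V+E^2)},
\]
which is even stronger than the claimed bound with $K_2=K_1$.

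The one case this does not cover is when ${\Var}X_k<{\Var}\sum_{i\neq k}X_i$, i.e.\ no single variable carries at least half the total variance. Here I would argue that one can still split $[n]$ into two blocks $A\sqcup B$ so that both $\sum_{i\in A}{\Var}X_i$ and $\sum_{i\in B}{\Var}X_i$ are each at least, say, $V/3$: greedily add indices in decreasing order of variance to whichever block currently has smaller total; since no single variance exceeds $V/2$, the final imbalance is controlled and each block gets at least a constant fraction of $V$. Setting $X=\sum_{i\in A}X_i$ and $Y=\sum_{i\in B}X_i$, Lemma~\ref{lemma: Var|X+Y+c| > K * VarX * VarY} yields
\[
{\Var}\left|\sum_i X_i\right|\geq\frac{V\cdot\min\{{\Var}X,{\Var}Y\}}{K_1(V+E^2)}\geq\frac{V\cdot(V/3)}{K_1(V+E^2)}.
\]
Now for \emph{any} $k$, ${\Var}\sum_{i\neq k}X_i=V-{\Var}X_k\leq V$, so in particular picking $k$ to be the largest-variance index gives ${\Var}\sum_{i\neq k}X_i\leq V$, whence $V/3\geq\frac13{\Var}\sum_{i\neq k}X_i$; substituting, ${\Var}|\sum X_i|\geq\frac{V\cdot{\Var}\sum_{i\neq k}X_i}{3K_1(V+E^2)}$, which is the desired inequality with $K_2=3K_1\leq 61440$.

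The main obstacle is getting the bookkeeping of the partition step exactly right so that the constant comes out to $3K_1$ (matching the stated $K_2\leq 61440$): one needs the greedy/balancing argument to guarantee each block holds a full $V/3$ of the variance, using crucially that in this case $\max_i{\Var}X_i<V/2$. A secondary subtlety is making sure the two cases are combined so that the \emph{same} $k$ (the argmax of ${\Var}X_i$) works in both; this is fine because in case~1 we prove the bound with that very $k$ and constant $K_1$, and in case~2 the bound holds with that $k$ and constant $3K_1$, so the worse constant $3K_1$ covers both. Everything else is immediate from Lemma~\ref{lemma: Var|X+Y+c| > K * VarX * VarY} and elementary variance identities for sums of independent variables.
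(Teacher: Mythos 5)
Your proposal is correct and follows essentially the same route as the paper: reduce to the two-variable Lemma \ref{lemma: Var|X+Y+c| > K * VarX * VarY} by splitting $[n]$ into two blocks, with a case distinction on whether a single variable dominates the total variance, yielding $K_2=3K_1$. The only real difference is cosmetic: the paper splits at threshold $2V/3$ (isolating a variable with ${\Var}X_k>2V/3$, and otherwise building two blocks each with variance in $[V/3,2V/3]$ by simple accumulation), which sidesteps the LPT-style greedy balancing guarantee you invoke when $\max_i{\Var}X_i<V/2$ --- that guarantee is in fact true, but you leave its verification as ``bookkeeping,'' whereas the paper's construction makes it immediate.
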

\begin{proof}
In order to generalize Lemma \ref{lemma: Var|X+Y+c| > K * VarX * VarY}
to $n$ variables, consider the two possible cases:
\begin{enumerate}
\item If for every $i$, ${\Var}X_{i}\leq2V/3$, then we can partition $\left[n\right]$
into two sets $A,B$ such that 
\begin{equation}
\frac{V}{3}\leq\sum_{i\in A}{\Var}X_{i},\sum_{i\in B}{\Var}X_{i}\leq\frac{2V}{3}.\label{eq:13<V<23}
\end{equation}
(If ${\Var}X_{i}\leq V/3$, we can iteratively add variables to $A$
until \eqref{eq:13<V<23} is true; if $V/3<{\Var}X_{i}\leq2V/3$ for
some $i$, we can simply take $A=\left\{ i\right\} $.) Thus, substituting
$X=\sum_{i\in A}X_{i}$ and $Y=\sum_{i\in B}X_{i}$ in Lemma \ref{lemma: Var|X+Y+c| > K * VarX * VarY},
we have that for every $k$
\[
{\Var}\left|\sum_{i}X_{i}\right|={\Var}\left|\sum_{i\in A}X_{i}+\sum_{i\in B}X_{i}\right|\geq\frac{V\cdot\frac{V}{3}}{K_{1}\left(V+E^{2}\right)}\geq\frac{V\cdot\frac{\sum_{i\neq k}X_{i}}{3}}{K_{1}\left(V+E^{2}\right)}.
\]

\item Otherwise, if ${\Var}X_{k}>2V/3$, apply Lemma \ref{lemma: Var|X+Y+c| > K * VarX * VarY}
with $X=X_{k}$ and $Y=\sum_{i\neq k}X_{i}$ to get:

\[
{\Var}\left|\sum_{i}X_{i}\right|={\Var}\left|X_{k}+\sum_{i\neq k}X_{i}\right|\geq\frac{\frac{V}{3}\cdot\sum_{i\neq k}X_{i}}{K_{1}\left(V+E^{2}\right)}.
\]

\end{enumerate}
The theorem follows for $K_{2}=3K_{1}$.
\end{proof}

\subsection{Proof of the extension to FKN Theorem}

Corollary \ref{fkn-like}, the generalization of the FKN Theorem,
follows easily from Theorem \ref{sum of sequence}.
\begin{corollary*}
\textup{(Corollary \ref{fkn-like})} Let $f\colon\left\{ \pm1\right\} ^{m}\rightarrow\left\{ \pm1\right\} $
be a Boolean function, $\left(I_{j}\right)_{j=1}^{n}$ a partition
of $\left[m\right]$. Also, for each $I_{j}$ let $f_{j}$ be the
restriction of $f$ to the variables with indices in $I_{j}$. Suppose
that $f$ is concentrated on coefficients that do not cross the partition,
i.e.:
\[
\sum_{S\colon\exists j,\, S\subseteq I_{j}}\hat{f}{}^{2}\left(S\right)\geq1-\left(\epsilon\cdot{\Var}f\right).
\]
Then for some $k\in\left[n\right]$, $f$ is close to $f_{k}+\widehat{f}\left(\emptyset\right)$:
\[
\left\Vert f-f_{k}-\widehat{f}\left(\emptyset\right)\right\Vert _{2}^{2}\leq\left(K_{2}+2\right)\cdot\epsilon.
\]
\end{corollary*}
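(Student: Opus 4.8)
The plan is to reduce Corollary~\ref{fkn-like} to Theorem~\ref{sum of sequence} by interpreting the restrictions $f_j$ as independent random variables and tracking the two sources of error: the cross-partition Fourier mass, and the gap between $\Var f$ and the variance of the closest single restriction. First I would introduce the random variables $X_j = f_j(X)$ for $X$ uniform on $\{\pm1\}^m$; since the blocks $I_j$ are disjoint and $X$ is a product distribution, the $X_j$ are independent, each has $\E X_j = 0$ (the empty coefficient is excluded from $f_j$), and $V := \sum_j \Var X_j = \sum_j \sum_{\emptyset\neq S\subseteq I_j}\hat f(S)^2 = \Var f - (\text{cross mass})$, which by hypothesis is at least $\Var f - \epsilon\cdot\Var f = (1-\epsilon)\Var f$. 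Also $E := \E\sum_j X_j = 0$, so the factor $V/(V+E^2)$ in Theorem~\ref{sum of sequence} is exactly $1$; this is why $E^2$ disappears from the final bound.

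Next I would set $g = \sum_j f_j + \widehat f(\emptyset) = \widehat f(\emptyset) + \sum_j X_j$, i.e. $g$ is $f$ with all cross-partition coefficients deleted. By Fact~\ref{fac:l2_norm-ft}, $\|f-g\|_2^2 = \sum_{S\colon \neg\exists j,\, S\subseteq I_j}\hat f(S)^2 \leq \epsilon\cdot\Var f \leq \epsilon$. Since $|f|\equiv 1$, we have $\Var|f| = 0$, and I would use the $2$-relaxed triangle inequality (Fact~\ref{fac: relaxed triangle ineq}) together with the fact that $\|\,|u|-|v|\,\|_2 \leq \|u-v\|_2$ to bound $\Var|g| = \Var|\widehat f(\emptyset)+\sum X_j|$ in terms of $\|f-g\|_2^2$: roughly, $\Var|g| \leq 2\|f-g\|_2^2 \leq 2\epsilon$ (the constant $2$ is where the ``$+2$'' in $K_2+2$ comes from). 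Now apply Theorem~\ref{sum of sequence} to the sequence $(X_j)$ — whose sum has expectation $0$ and variance $V$ — to get some $k$ with
\[
2\epsilon \geq \Var\Bigl|\sum_j X_j\Bigr| \geq \frac{V\cdot \Var\sum_{j\neq k}X_j}{K_2\cdot V} = \frac{\Var\sum_{j\neq k}X_j}{K_2},
\]
hence $\Var\sum_{j\neq k}X_j \leq 2K_2\epsilon$. Wait — I should be more careful with the constant; since the intended bound is $(K_2+2)\epsilon$, I would instead bound $\Var|g|$ more tightly, using that shifting by the constant $\widehat f(\emptyset)$ and the contraction property of absolute value give $\Var\left|\sum X_j\right| \le \Var\left|g\right| + \text{(correction)}$; the honest route is to directly estimate $\|\,|f|-|g|\,\|_2^2 = \|\,1-|g|\,\|_2^2$ and note $\Var|g| = \min_c\|\,|g|-c\,\|_2^2 \leq \|\,|g|-1\,\|_2^2 \leq \|g-f\|_2^2 \leq \epsilon\cdot\Var f$, so in fact $\Var\sum_{j\neq k}X_j \leq K_2\cdot\epsilon\cdot\Var f \le K_2\epsilon$.

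Finally I would assemble the conclusion. We have $\|f - f_k - \widehat f(\emptyset)\|_2^2 = \|f - g + \sum_{j\neq k} X_j\|_2^2$, and since these are comparing via Fourier coefficients on disjoint index sets (the cross terms vs. the blocks $I_j$ for $j\neq k$ are disjoint), Fact~\ref{fac:l2_norm-ft} gives exact additivity:
\[
\|f - f_k - \widehat f(\emptyset)\|_2^2 = \|f-g\|_2^2 + \Var\sum_{j\neq k}X_j \leq \epsilon\cdot\Var f + K_2\cdot\epsilon\cdot\Var f \leq (K_2+1)\epsilon,
\]
which is within the claimed $(K_2+2)\epsilon$ (the slack absorbs the loose constant in the $\Var|g|$ estimate). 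The main obstacle I anticipate is getting the constant exactly right in the step that passes from $\|f-g\|_2^2 \leq \epsilon\Var f$ to a bound on $\Var|\sum_j X_j|$: one must be careful that the absolute value is taken after adding the constant $\widehat f(\emptyset)$, so $\Var|g|$ with $g = \widehat f(\emptyset) + \sum_j X_j$ is what Theorem~\ref{sum of sequence} bounds only after re-centering — but Theorem~\ref{sum of sequence} is stated for a sum of independent variables with \emph{arbitrary} expectation, so absorbing $\widehat f(\emptyset)$ into one of the $X_j$ (or into $E$) handles this cleanly, and the $V/(V+E^2)$ factor then costs at most a constant that I would fold into the final estimate; since here genuinely $E=0$ once we don't absorb the constant but instead treat $\widehat f(\emptyset)$ as part of the ``$c$'' in $\Var|X+Y+c|$-type reasoning, the factor is exactly $1$ and no loss occurs.
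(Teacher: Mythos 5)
Your overall route is the same as the paper's (approximate $f$ by $g=\sum_j f_j+\widehat{f}(\emptyset)$, use $|f|\equiv1$ to get ${\Var}|g|\le\|f-g\|_2^2\le\epsilon\cdot{\Var}f$, apply Theorem \ref{sum of sequence}, and finish with Parseval additivity), but there is a genuine gap in how you handle the constant term $\widehat{f}(\emptyset)$. Theorem \ref{sum of sequence} bounds ${\Var}\bigl|\sum_i X_i\bigr|$ where $E$ is the expectation of whatever sits \emph{inside} the absolute value; it does not accept a separate additive constant ``$c$''. Since the quantity you control is ${\Var}\bigl|\widehat{f}(\emptyset)+\sum_j f_j\bigr|$, the constant must be absorbed into the sum, so the theorem is applied with $E=\widehat{f}(\emptyset)$ and $V={\Var}\sum_j f_j$ --- not with $E=0$. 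Your claim that ``the factor $V/(V+E^2)$ is exactly $1$ and no loss occurs'' therefore fails exactly in the regime the corollary is about: when ${\Var}f$ is small, $\widehat{f}(\emptyset)^2$ is close to $1$ while $V=O({\Var}f)$, so $V/(V+E^2)=\Theta({\Var}f)=o(1)$, a vanishing factor rather than a constant. Concretely, the intermediate bound you derive from the $E=0$ application, ${\Var}\sum_{j\neq k}f_j\le K_2\,\epsilon\cdot{\Var}f$, is false in general: for the tribes-type example of Lemma \ref{tightness of main result-1} one has $\epsilon\cdot{\Var}f=O(2^{-2m})$ but ${\Var}\sum_{j\neq k}f_j=\Theta(2^{-m})$, contradicting your inequality (and, if it were true, it would yield a conclusion of order $\epsilon\cdot{\Var}f$, which that lemma rules out).

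The repair is the step your sketch skips, and it is where the factor ${\Var}f$ in the hypothesis gets used. Applying the theorem with $E=\widehat{f}(\emptyset)$ gives
\[
{\Var}\sum_{j\neq k}f_j\;\le\;K_2\,\epsilon\cdot{\Var}f\cdot\frac{V+\widehat{f}(\emptyset)^2}{V},
\]
and since $f$ is Boolean, Parseval gives $V+\widehat{f}(\emptyset)^2\le\sum_S\hat{f}^2(S)=1$, while the hypothesis gives $V\ge(1-\epsilon){\Var}f$; the ${\Var}f$ factors cancel and one gets ${\Var}\sum_{j\neq k}f_j\le\frac{K_2}{1-\epsilon}\,\epsilon$. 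Assuming without loss of generality $\epsilon\le1/(K_2+2)$ turns this into $(K_2+1)\epsilon$, and adding the cross-partition mass $\le\epsilon\cdot{\Var}f\le\epsilon$ in your (correct) final Parseval decomposition yields the stated $(K_2+2)\epsilon$. The rest of your argument --- independence of the $f_j$, the estimate ${\Var}|g|\le\|f-g\|_2^2$, and the exact additivity $\|f-f_k-\widehat{f}(\emptyset)\|_2^2=\|f-g\|_2^2+{\Var}\sum_{j\neq k}f_j$ --- is sound and matches the paper.
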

\begin{proof}
From the premise it follows that $f$ is $\epsilon\cdot{\Var}f$-close
to the sum of the $f_{j}$'s and the empty character:
\[
\left\Vert f-\sum_{j}f_{j}-\hat{f}\left(\emptyset\right)\right\Vert _{2}^{2}\leq\epsilon\cdot{\Var}f.
\]
Since $f$ is Boolean, this implies in particular that
\[
{\Var}\left|\sum_{j}f_{j}+\hat{f}\left(\emptyset\right)\right|\leq\epsilon\cdot{\Var}f.
\]
Thus by the main theorem, for some $k\in\left[n\right]$
\begin{eqnarray*}
\frac{{\Var}\sum_{j}f_{j}\cdot{\Var}\sum_{j\neq k}f_{j}}{K_{2}\cdot\left({\Var}\sum_{j}f_{j}+\hat{f}\left(\emptyset\right)^{2}\right)} & \leq & \epsilon\cdot{\Var}f.\\
{\Var}\sum_{j\neq k}f_{j} & \leq & K_{2}\cdot\epsilon\cdot{\Var}f\cdot\frac{{\Var}\sum_{j}f_{j}+\hat{f}\left(\emptyset\right)^{2}}{{\Var}\sum_{j}f_{j}}\\
{\Var}\sum_{j\neq k}f_{j} & \leq & K_{2}\cdot\epsilon.
\end{eqnarray*}
Rearranging and using ${\Var}\sum_{j}f_{j}+\hat{f}\left(\emptyset\right)^{2}\leq1$,
we have 
\[
{\Var}\sum_{j\neq k}f_{j}\leq K_{2}\cdot\epsilon\cdot\frac{{\Var}f}{{\Var}\sum_{j}f_{j}}.
\]
From the premise, we have ${\Var}\sum_{j}f_{j}\geq\left(1-\epsilon\right){\Var}f$,
and therefore
\begin{equation}
{\Var}\sum_{j\neq k}f_{j}\leq\frac{K_{2}}{1-\epsilon}\cdot\epsilon.\label{eq:k2-div-(1-epsilon)}
\end{equation}
Finally, we can assume without loss of generality that $\epsilon\leq1/\left(K_{2}+2\right)<1/\left(K_{2}+1\right)$,
and thus
\begin{gather*}
\frac{K_{2}}{1-\epsilon}<\frac{K_{2}}{1-\frac{1}{K_{2}+1}}=K_{2}+1.
\end{gather*}
Plugging back into \eqref{eq:k2-div-(1-epsilon)}, we get: 
\[
{\Var}\sum_{j\neq k}f_{j}<\left(K_{2}+1\right)\cdot\epsilon.
\]
Finally, 
\begin{eqnarray*}
\left\Vert f-f_{k}-\widehat{f}\left(\emptyset\right)\right\Vert _{2}^{2} & = & \sum_{S\colon S\nsubseteq I_{k}}\hat{f}{}^{2}\left(S\right)\\
 & \leq & {\Var}\sum_{j\neq k}f_{j}+\sum_{S\colon\forall j,\, S\nsubseteq I_{j}}\hat{f}{}^{2}\left(S\right)\\
 & \leq & \left(K_{2}+1\right)\cdot\epsilon+\epsilon{\Var}f\\
 & \leq & \left(K_{2}+2\right)\cdot\epsilon.
\end{eqnarray*}

\end{proof}

\section{\label{sec:Proofs-of-technical}Proofs of technical claims}

\subsection{\label{sub:Proving claim for every x,x,y,y}Expected squared distance:
proof of Claim \ref{cla:For-every x,x,y,y}}

We use case analysis to prove that adding any balanced $\overline{Y}$
with support of size two preserves (up to a factor of $\frac{1}{4}$)
the expected squared distance between any two possible evaluations
of $\left|\overline{X}+E\right|$. 
\begin{claim}
(Claim \ref{cla:For-every x,x,y,y}) For every two possible evaluations
$x_{1},x_{2}$ in the support of $\left(\overline{X}+E\right)$,
\[
{\E}_{\left(y_{1},y_{2}\right)\sim\overline{Y}\times\overline{Y}}\left(\left|x_{1}+y_{1}\right|-\left|x_{2}+y_{2}\right|\right)^{2}\geq\frac{1}{4}\left(\left|x_{1}\right|-\left|x_{2}\right|\right)^{2}.
\]
 \end{claim}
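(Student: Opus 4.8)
The plan is to prove the inequality by a direct case analysis on the relative signs and magnitudes of $x_1,x_2$, exploiting that $\overline{Y}$ is balanced with a two-point support, say $\overline{Y}\in\{-y,+y\}$ each with probability $1/2$ (we may assume $y\ge 0$; if $y=0$ the claim is immediate since the left side is exactly $(|x_1|-|x_2|)^2$). The left-hand side is then the average of four terms $(|x_1\pm y|-|x_2\pm y|)^2$ over the four equally likely choices of $(y_1,y_2)$. Our goal is to show this average is at least $\tfrac14(|x_1|-|x_2|)^2$. By symmetry (swapping $x_1\leftrightarrow x_2$ and/or negating both, which negates nothing on the right and permutes the four terms on the left) we may assume $x_1\ge x_2$ and, say, $|x_1|\ge|x_2|$, i.e. work with the configuration where $x_1$ is the larger-in-absolute-value coordinate.

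The key observation I would use to tame the absolute values is that for fixed real $t$, the map $s\mapsto |s+t|$ is $1$-Lipschitz, and more usefully that averaging $|s+y|$ and $|s-y|$ over the two sign choices gives $\tfrac12(|s+y|+|s-y|)=\max\{|s|,|y|\}$. So I would first handle the ``diagonal'' pairs $(y_1,y_2)=(y,y)$ and $(-y,-y)$: these contribute $\tfrac14\big[(|x_1+y|-|x_2+y|)^2+(|x_1-y|-|x_2-y|)^2\big]$, and then the ``off-diagonal'' pairs $(y,-y),(-y,y)$ contribute the remaining $\tfrac14\big[(|x_1+y|-|x_2-y|)^2+(|x_1-y|-|x_2+y|)^2\big]$. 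In the regime where $x_1,x_2$ have the same sign, say both $\ge 0$, the diagonal terms alone already suffice: when $y\le x_2\le x_1$ each diagonal term equals $(x_1-x_2)^2=(|x_1|-|x_2|)^2$, giving a factor $\tfrac12>\tfrac14$; when $x_2\le y\le x_1$ one checks $(|x_1-y|-|x_2-y|)^2=(x_1-y+x_2-\,?\,)^2$ type identities reduce to $(x_1-x_2)^2$ up to the needed slack; and when $y\ge x_1\ge x_2\ge 0$ the diagonal terms vanish but then $|x_1|,|x_2|\le y$ are small and the off-diagonal terms $(|x_1+y|-|x_2-y|)^2=(x_1+y-(y-x_2))^2=(x_1+x_2)^2\ge(x_1-x_2)^2$ carry the bound. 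The opposite-sign case $x_1\ge 0\ge x_2$ is symmetric after relabeling and is in fact easier since then $|x_1|-|x_2|=x_1+x_2$ is already ``small'' relative to $x_1-x_2$.

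The main obstacle will be organizing the case split cleanly so that no case is missed and the constant $\tfrac14$ is never violated — in particular the boundary regime where $y$ sits strictly between $|x_2|$ and $|x_1|$, where some of the four terms collapse and one must argue that the surviving terms still dominate. I expect the cleanest packaging is: (i) reduce to $\overline{Y}=\pm y$, $y\ge0$, $x_1\ge|x_2|$ by symmetry; (ii) split on the three positions of $y$ relative to the interval $[|x_2|,|x_1|]$ (or a sign subtlety when $x_2<0$); (iii) in each region write each $|x_i\pm y|$ without the absolute value and verify the resulting polynomial inequality, which in every case reduces to comparing $(x_1-x_2)^2$, $(x_1+x_2)^2$, or simple sums against $\tfrac14(|x_1|-|x_2|)^2$. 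None of the individual verifications is deep; the care is entirely in the bookkeeping, and I would present it as a short table of cases with the one-line verification in each.
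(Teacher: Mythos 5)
There is a genuine gap at the very first step of your reduction. You assert that a \emph{balanced} two-point variable can be taken to be $\overline{Y}\in\{-y,+y\}$ with probability $\tfrac12$ each, but balanced only means mean zero: a general balanced two-point variable takes the value $d/p_Y$ with probability $p_Y$ and $-d/(1-p_Y)$ with probability $1-p_Y$, for arbitrary $p_Y\in(0,1)$. The symmetric case $p_Y=\tfrac12$ is a strict special case, and you cannot recover the general case from it, since any mixture of symmetric (about $0$) variables is again symmetric, whereas the Krein--Milman decomposition used just before this claim produces genuinely asymmetric two-point components. So your argument proves the claim only in the easiest regime: indeed, for symmetric $\pm y$ the event $y_1=y_2=+y$ (probability $\tfrac14$, with WLOG $x_1\ge 0$, $|x_1|\ge|x_2|$) already gives $\left(\left|x_1+y\right|-\left|x_2+y\right|\right)^2\ge\left(\left|x_1\right|-\left|x_2\right|\right)^2$, which is exactly the paper's case $p_Y\ge\tfrac12$.

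The real difficulty, which your proposal never meets, is the skewed regime $p_Y<\tfrac14$: there $\overline{Y}$ usually takes a small negative value $-d/(1-p_Y)$, which can nearly cancel $x_1$ and make the dominant term $\left(\left|x_1+y_1\right|-\left|x_2+y_2\right|\right)^2$ far smaller than $\left(\left|x_1\right|-\left|x_2\right|\right)^2$; the bound must then be recovered from the rare events involving the large value $d/p_Y$, whose probability $p_Y(1-p_Y)$ has to be traded off against the size of the displacement. The paper does this by splitting further on whether $x_1\le 2d/(1-p_Y)$ or not, with a quantitative estimate of $\tfrac{d}{p_Y}-\tfrac{d}{1-p_Y}$ in terms of $|x_1|/\sqrt{p_Y(1-p_Y)}$ in the first subcase and a sum over all sign patterns with cancellation of the linear terms in the second. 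None of this is visible in your sketch, and your remaining case analysis (even in the symmetric setting) is only gestured at (``up to the needed slack''), so as written the proposal does not establish the claim.
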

\begin{proof}
Denote 
\begin{eqnarray*}
p_{Y} & = & \Pr\left[\overline{Y}\geq0\right].
\end{eqnarray*}
Because $\overline{Y}$ is balanced, its two possible values must
be of the form $\left\{ \frac{d}{p_{Y}},\frac{-d}{1-p_{Y}}\right\} $,
for some $d\geq0$. Assume without loss of generality that $x_{1}\geq0$
and $\left|x_{1}\right|\geq\left|x_{2}\right|$. 

We divide our analysis to cases based on the value of $p_{Y}$ (see
also Figure \ref{fig:4_pt_argument}):

\begin{figure}[t]
\begin{centering}
\fbox{\parbox[t]{1\columnwidth}{%
\begin{center}
\includegraphics[width=1\textwidth]{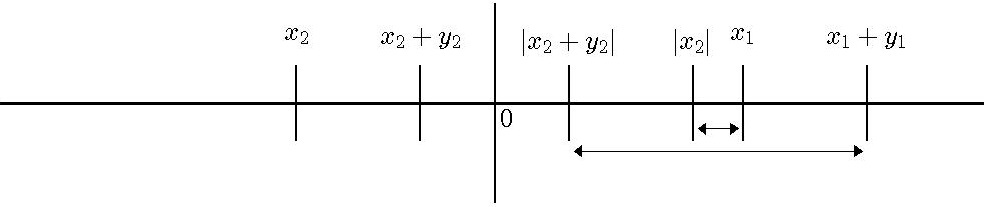}
\par\end{center}%
}}
\par\end{centering}

\begin{centering}
\fbox{\parbox[t]{1\columnwidth}{%
\begin{center}
\includegraphics[width=1\textwidth]{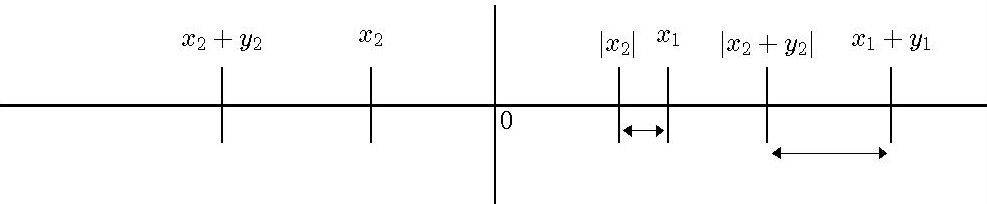}
\par\end{center}%
}}
\par\end{centering}

\protect\caption{\label{fig:4_pt_argument}Case analysis in the proof of Claim \ref{cla:For-every x,x,y,y}}

The top figure corresponds to the case where both $y_{1}$ and $y_{2}$
are non-negative, i.e. $y_{1}=y_{2}=\frac{d}{p_{Y}}\geq0$. Notice
that the distance between $\left|x_{1}+y_{1}\right|$ and $\left|x_{2}+y_{2}\right|$
is at least the distance between $\left|x_{1}\right|$ and $\left|x_{2}\right|$.
This case occurs with probability $p_{Y}^{2}$.

The bottom figure corresponds to the case where $p_{Y}<\frac{1}{2}$
and $y_{1}=\frac{d}{p_{Y}}>\frac{d}{1-p_{Y}}=\left|y_{2}\right|$.
Notice that in this case the distance also cannot decrease. In particular,
when $p_{Y}<\frac{1}{4}$, we have $y_{1}=\frac{d}{p_{Y}}>3\frac{d}{1-p_{Y}}=3\left|y_{2}\right|$,
and therefore the distance actually increases by a significant amount.
This case occurs with probability $p_{Y}\left(1-p_{Y}\right)$.
\end{figure}

\begin{enumerate}
\item If $p_{Y}\geq\frac{1}{2}$ then with probability at least $\frac{1}{4}$
both evaluations of $\overline{Y}$ are non-negative, in which case
the distance between $\left|x_{1}\right|$ and $\left|x_{2}\right|$
can only increase: 
\begin{eqnarray*}
{\E}_{y_{1},y_{2}}\left(\left|x_{1}+y_{1}\right|-\left|x_{2}+y_{2}\right|\right)^{2} & \geq & \Pr\left[y_{1},y_{2}\geq0\right]\cdot\left(x_{1}+\frac{d}{p_{Y}}-\left|x_{2}+\frac{d}{p_{Y}}\right|\right)^{2}\\
 & \geq & \Pr\left[y_{1},y_{2}\geq0\right]\cdot\left(x_{1}+\frac{d}{p_{Y}}-\left|x_{2}\right|-\frac{d}{p_{Y}}\right)^{2}\\
 & \geq & \frac{1}{4}\left(\left|x_{1}\right|-\left|x_{2}\right|\right)^{2}.
\end{eqnarray*}

\item If $\frac{1}{4}\leq p_{Y}<\frac{1}{2}$ then with probability at least
$\frac{1}{4}$, $y_{1}$ is non-negative; we also use $\frac{d}{p_{Y}}\geq\left|\frac{-d}{1-p_{Y}}\right|$
implies $y_{1}\geq\left|y_{2}\right|$: 
\begin{eqnarray*}
{\E}_{y_{1},y_{2}}\left(\left|x_{1}+y_{1}\right|-\left|x_{2}+y_{2}\right|\right)^{2} & \geq & \Pr\left[y_{1}\geq0\right]\cdot\left(\left|x_{1}\right|-\left|x_{2}\right|+y_{1}-\left|y_{2}\right|\right)^{2}\\
 & \geq & \Pr\left[y_{1}\geq0\right]\cdot\left(\left|x_{1}\right|-\left|x_{2}\right|\right)^{2}\\
 & \geq & \frac{1}{4}\left(\left|x_{1}\right|-\left|x_{2}\right|\right)^{2}.
\end{eqnarray*}

\item If $p_{Y}<\frac{1}{4}$ and $x_{1}\leq\frac{2d}{1-p_{Y}}$, we can
prove the claim by focusing on the case $y_{1}\geq0,y_{2}<0$: 
\begin{eqnarray}
{\E}_{y_{1},y_{2}}\left(\left|x_{1}+y_{1}\right|-\left|x_{2}+y_{2}\right|\right)^{2} & \geq & \Pr\left[y_{1}\geq0,y_{2}<0\right]\cdot\left(x_{1}-x_{2}+\frac{d}{p_{Y}}-\frac{d}{1-p_{Y}}\right)^{2}\label{eq:Pr=00005By1>0,y2<0=00005D}
\end{eqnarray}
Notice that $p_{Y}\leq\frac{1}{3}\left(1-p_{Y}\right)$ implies that
\[
\frac{d}{p_{Y}}-\frac{d}{1-p_{Y}}\geq\frac{2}{3}\cdot\frac{d}{p_{Y}}.
\]
Furthermore, since $p_{Y}\leq\frac{1}{4}$ and $1-p_{Y}\geq\frac{3}{4}$,
we have that 
\[
\frac{d}{p_{Y}}-\frac{d}{1-p_{Y}}\geq\frac{2}{3}\cdot\frac{\sqrt{p_{Y}}}{\frac{1}{2}}\cdot\frac{\left(\frac{3}{4}\right)^{3/2}}{\left(1-p_{Y}\right)^{3/2}}\cdot\frac{d}{p_{Y}}=\frac{\sqrt{\frac{3}{2}}}{\sqrt{p_{Y}\left(1-p_{Y}\right)}}\frac{d}{1-p_{Y}}\geq\frac{\sqrt{\frac{3}{8}}}{\sqrt{p_{Y}\left(1-p_{Y}\right)}}\left|x_{1}\right|.
\]
Plugging back into \eqref{eq:Pr=00005By1>0,y2<0=00005D} we have
\begin{eqnarray*}
{\E}_{y_{1},y_{2}}\left(\left|x_{1}+y_{1}\right|-\left|x_{2}+y_{2}\right|\right)^{2} & \geq & \Pr\left[y_{1}\geq0,y_{2}<0\right]\left(\frac{\sqrt{\frac{3}{8}}}{\sqrt{p_{Y}\left(1-p_{Y}\right)}}\left|x_{1}\right|\right)^{2}=\frac{3}{8}\left|x_{1}\right|^{2}.
\end{eqnarray*}

\item Else, if $p_{Y}<\frac{1}{4}$ and $x_{1}>\frac{2d}{1-p_{Y}}$, we
need to sum over the possible signs of $y_{1},y_{2}$, and use the
fact that $\frac{d}{p_{Y}}$ is much larger than $\left|\frac{-d}{1-p_{Y}}\right|$:
\begin{eqnarray*}
{\E}_{y_{1},y_{2}}\left(\left|x_{1}+y_{1}\right|-\left|x_{2}+y_{2}\right|\right)^{2} & \geq & \Pr\left[y_{1}\geq0,y_{2}\geq0\right]\cdot\left(\left|x_{1}\right|-\left|x_{2}\right|\right)^{2}+\\
 &  & \Pr\left[y_{1}\geq0,y_{2}<0\right]\cdot\underbrace{\left(\left|x_{1}\right|-\left|x_{2}\right|+\left(\frac{d}{p_{Y}}-\frac{d}{1-p_{Y}}\right)\right)^{2}}_{\left(a\right)}+\\
 &  & \Pr\left[y_{1}<0,y_{2}<0\right]\cdot\underbrace{\left(\left|x_{1}\right|-\left|x_{2}\right|-\frac{2d}{1-p_{Y}}\right)^{2}}_{\left(b\right)},
\end{eqnarray*}
where we used the condition $x_{1}>\frac{2d}{1-p_{Y}}$ in the third
line.

We have
\[
\left(a\right)\geq\left(\left|x_{1}\right|-\left|x_{2}\right|\right)^{2}+2\left(\frac{d}{p_{Y}}-\frac{d}{1-p_{Y}}\right)\left(\left|x_{1}\right|-\left|x_{2}\right|\right)
\]
and
\begin{gather*}
\left(b\right)\geq\frac{1}{4}\left(b\right)\geq\frac{1}{4}\left(\left(\left|x_{1}\right|-\left|x_{2}\right|\right)^{2}-2\frac{2d}{1-p_{Y}}\left(\left|x_{1}\right|-\left|x_{2}\right|\right)\right).
\end{gather*}
Therefore, 
\begin{align*}
{\E}_{y_{1},y_{2}}\left(\left|x_{1}+y_{1}\right|-\left|x_{2}+y_{2}\right|\right)^{2} & \geq  p_{Y}^{2}\left(\left|x_{1}\right|-\left|x_{2}\right|\right)^{2}+\\
 &   p_{Y}\left(1-p_{Y}\right)\left(\left(\left|x_{1}\right|-\left|x_{2}\right|\right)^{2}+2\left(\frac{d}{p_{Y}}-\frac{d}{1-p_{Y}}\right)\left(\left|x_{1}\right|-\left|x_{2}\right|\right)\right)+\\
 &   \frac{1}{4}\left(1-p_{Y}\right)^{2}\left(\left(\left|x_{1}\right|-\left|x_{2}\right|\right)^{2}-2\frac{2d}{1-p_{Y}}\left(\left|x_{1}\right|-\left|x_{2}\right|\right)\right)\\
 & \geq  \frac{1}{4}\left(\left|x_{1}\right|-\left|x_{2}\right|\right)^{2}+2d\left(\left(1-2\cdot\frac{1}{4}\right)\left(1-p_{Y}\right)-p_{Y}\right)\left(\left|x_{1}\right|-\left|x_{2}\right|\right)\\
 & \geq  \frac{1}{4}\left(\left|x_{1}\right|-\left|x_{2}\right|\right)^{2}.
\end{align*}

\end{enumerate}
\end{proof}

\subsection{\label{sub:Proof-of-analisys for const | |}Constant absolute value:
proof of Claim \ref{cla: const  | |}}

We use a brute-force case analysis to prove a relative lower bound
on the variance in absolute value of a sum of two variables with constant
absolute values:
\begin{claim}
(Claim \ref{cla: const  | |}) Let $\overline{X},\overline{Y}$ be
balanced random variables and let $X',Y'$ be the constant-absolute-value
approximations of $\overline{X},\overline{Y}$, respectively:
\begin{eqnarray*}
X' & = & \mbox{sign}\left(\overline{X}+E\right)\cdot{\E}\left|\overline{X}+E\right|\\
Y' & = & \mbox{sign}\left(\overline{Y}+E\right)\cdot{\E}\left|\overline{Y}+E\right|.
\end{eqnarray*}
Then the variance of the \emph{absolute value} of $X'+Y'-E$ is
bounded by:
\[
{\Var}\left|X'+Y'-E\right|\geq\frac{{\Var}\left(X'-E\right){\Var}\left(Y'-E\right)}{16\left({\Var}\overline{X}+E^{2}\right)}.
\]
\end{claim}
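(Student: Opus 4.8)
The plan is to parametrize the two constant-absolute-value variables and reduce to a bounded number of cases. Write $d_X = \E|\overline X + E|$ and $d_Y = \E|\overline Y+E|$, so that $X'$ takes values $\pm d_X$ and $Y'$ takes values $\pm d_Y$; let $p_X = \Pr[\overline X + E \geq 0]$ and $p_Y = \Pr[\overline Y + E \geq 0]$. Since $X'$ and $Y'$ are $\pm$-valued, $X' + Y' - E$ takes at most four values, namely $d_X + d_Y - E$, $d_X - d_Y - E$, $-d_X + d_Y - E$, $-d_X - d_Y - E$, with probabilities that are products of the marginals (using independence of $\overline X, \overline Y$). Hence $\Var|X'+Y'-E|$ is a fully explicit function of the five real parameters $d_X, d_Y, E, p_X, p_Y$. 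The goal is to lower-bound it by $\Var(X'-E)\cdot\Var(Y'-E) / (16(\Var\overline X + E^2))$, and all of these quantities are also explicit in the same parameters: e.g. $\Var(X'-E) = \Var X' = 4 d_X^2 p_X(1-p_X)$, and $\Var\overline X + E^2 = \E[(\overline X + E)^2] = \E[(X'-E)^2] \geq$ something comparable — actually one should note $\Var\overline X \geq \Var X'$ is not automatic, but $\Var\overline X + E^2 = \E(\overline X+E)^2 \geq (\E|\overline X+E|)^2 = d_X^2$, which is the bound we actually want in the denominator.

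First I would record the clean reformulations: $\Var(X'-E) = 4 d_X^2\, p_X(1-p_X)$, similarly for $Y'$; and $\Var\overline X + E^2 \geq d_X^2$ (by Jensen / Cauchy–Schwarz, since $d_X = \E|\overline X+E| \leq (\E(\overline X+E)^2)^{1/2}$). So it suffices to prove the stronger, parameter-only inequality
\[
\Var\bigl|X'+Y'-E\bigr| \;\geq\; \frac{16\, d_X^2\, d_Y^2\, p_X(1-p_X)\, p_Y(1-p_Y)}{16\, d_X^2} \;=\; 4\, d_Y^2\, p_X(1-p_X)\, p_Y(1-p_Y),
\]
wait — that over-simplifies; the correct target after substitution is $\Var|X'+Y'-E| \geq \tfrac{1}{4} d_Y^2\, p_X(1-p_X)\,\cdot 4 p_Y(1-p_Y) \cdot \tfrac{d_X^2}{d_X^2}$, i.e. a bilinear-in-the-two-Bernoulli-variances bound with an explicit constant. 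WLOG scale so that $d_X = 1$ (divide through), and WLOG by the symmetry $Y' \mapsto -Y'$, $E \mapsto E + 2\E Y'$-type moves, or simply by symmetry of the expression, assume $E \geq 0$ and $d_X \geq d_Y$ (the denominator singled out $\Var\overline X$, so I must be careful which variable is "large" — I would handle both orderings or note the asymmetry is harmless since $\Var\overline X + E^2 \geq d_X^2 \geq d_Y^2$ only under $d_X \geq d_Y$, else use a different split).

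Next I would do the case analysis on the relative sizes of $d_X$, $d_Y$, $E$, which governs which of the four values $\pm d_X \pm d_Y - E$ are positive and which negative, and hence how the absolute value acts. The main subcase is when all four signs are "generic" (no cancellation): then $|X'+Y'-E|$ is affine in the signs, the variance is a clean quadratic form in $p_X, p_Y$, and the bound falls out, in fact with room to spare. The delicate subcases are the near-degenerate ones — e.g. $d_X + d_Y \approx E$, or $d_X - d_Y \approx \pm E$ — where taking absolute values folds two of the atoms close together and can shrink the variance; there one uses that the other pair of atoms is then well-separated (since the four points are $d_X+d_Y-E > d_X - d_Y - E > -d_X+d_Y-E > -d_X-d_Y-E$ when $d_X>d_Y>0$, spaced by $2d_Y, 2(d_X-d_Y), 2d_Y$), so at least one gap of size $\geq \min\{2d_Y, \ldots\}$ survives the folding and contributes enough variance. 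I expect the main obstacle to be bookkeeping: verifying that in every degenerate sign pattern the surviving separation, weighted by the corresponding product probability $p_X(1-p_X)p_Y(1-p_Y)$ (or one of its cousins), beats $\tfrac{1}{16} d_Y^2 p_X(1-p_X)p_Y(1-p_Y)/d_X^2$; the constant $16$ is generous precisely to absorb the worst-case halving from folding, analogously to the factor $\tfrac14$ absorbed in Claim \ref{cla:For-every x,x,y,y}. I would organize the write-up by first disposing of the generic case in one line via Fact \ref{fac: (z1-z2)} (expected squared distance between two independent evaluations), then tabulating the $O(1)$ degenerate patterns.
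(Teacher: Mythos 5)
Your proposal follows essentially the same route as the paper's proof: the same parametrization by $p_X,p_Y,d_X,d_Y,E$, the same reduction via $\Var\left(X'-E\right)=4d_X^2\,p_X\left(1-p_X\right)$ and $d_X\le\sqrt{\Var\overline{X}+E^2}$ to the parameter-only target $\Var\left|X'+Y'-E\right|\ge d_Y^2\,p_X\left(1-p_X\right)p_Y\left(1-p_Y\right)$, and the same sign-pattern case analysis through Fact~\ref{fac: (z1-z2)}; the ``bookkeeping'' you defer is exactly the paper's short closing argument that the surviving separations satisfy $\min\left\{\left|2d_X-2E\right|,2d_Y\right\}^2+\min\left\{2E,2d_X+2d_Y\right\}^2\ge d_Y^2$ (split on whether $2E\ge d_Y$), combined with $\left(1-p_X\right)^2+p_X^2\ge p_X\left(1-p_X\right)$. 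Your caution about which of $d_X,d_Y$ is larger is warranted: the paper simply assumes $d_Y<d_X$ and $E\ge 0$ ``without loss of generality,'' which is precisely the ordering your reduction needs, and in the application of the claim the denominator is anyway relaxed to $V+E^2$, so the asymmetry is harmless there.
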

\begin{proof}
Denote $p_{X}=\Pr\left[\overline{X}+E\geq0\right]$ and $d_{X}={\E}\left|\overline{X}+E\right|$
(and analogously for $p_{Y},d_{Y}$). 

Observe that
\[
d_{X}\leq\sqrt{{\E}\left[\left(\overline{X}+E\right)^{2}\right]}=\sqrt{{\Var}\left(\overline{X}+E\right)+\left({\E}\left[\overline{X}+E\right]\right)^{2}}=\sqrt{{\Var}\overline{X}+E^{2}}.
\]
Thus we can bound $\left(1-p_{X}\right)p_{X}$ from below by:
\[
{\Var}X'=\left(2d_{X}\right)^{2}\Pr\left[X'\leq0\right]\Pr\left[X'>0\right]\leq4\left({\Var}\overline{X}+E^{2}\right)\left(1-p_{X}\right)p_{X}
\]
\begin{equation}
\left(1-p_{X}\right)p_{X}\geq\frac{{\Var}X'}{4\left({\Var}\overline{X}+E^{2}\right)}.\label{eq: (1-px) * px lower bound}
\end{equation}

Also, for $Y'$ we have
\[
{\Var}Y'=p_{Y}\cdot\left(1-p_{Y}\right)\left(2d_{Y}\right)^{2}.
\]
\begin{figure}[!t]
\begin{centering}
\fbox{\parbox[t]{1\columnwidth}{%
\begin{center}
\includegraphics[width=1\textwidth]{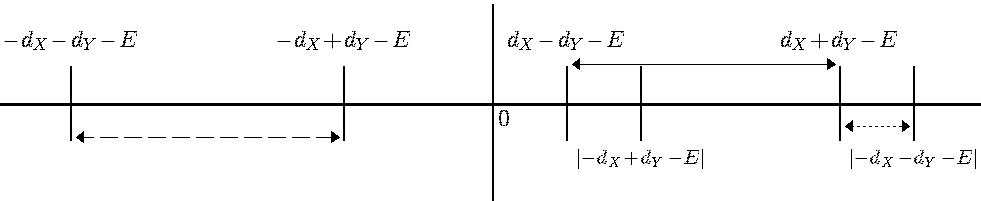}
\par\end{center}%
}}
\par\end{centering}

\begin{centering}
\fbox{\parbox[t]{1\columnwidth}{%
\begin{center}
\includegraphics[width=1\textwidth]{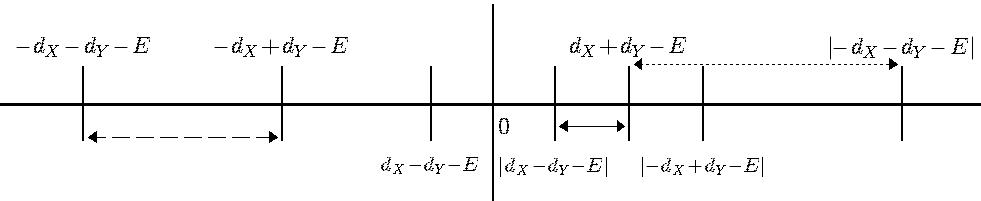}
\par\end{center}%
}}
\par\end{centering}

\protect\caption{\label{fig:const_abs_value}Case analysis in the proof of Claim \ref{cla: const  | |}}

\begin{itemize}
\item With probability $2\left(1-p_{X}\right)^{2}\cdot\left(1-p_{Y}\right)p_{Y}$
both $x$'s are negative, and $y$'s are of opposite signs. Notice
that since we assume $E\geq0$ and $d_{X}\geq d_{Y}$, the distance
between $\left|-d_{X}-d_{Y}-E\right|$ and $\left|-d_{X}+d_{Y}-E\right|$
is the same as the distance between $-d_{X}-d_{Y}-E$ and $-d_{X}+d_{Y}-E$
(marked by dashed line on both figures); it is therefore always $2d_{Y}$
. 
\item With probability $2p_{X}^{2}\cdot\left(1-p_{Y}\right)p_{Y}$ both
$x$'s are positive, and $y$'s are of opposite signs. Notice that
the distance between $\left|d_{X}+d_{Y}-E\right|$ and $\left|d_{X}-d_{Y}-E\right|$
(marked by the solid lines) is either $2d_{Y}$ (as in the top figure)
or $\left|2d_{X}-2E\right|$ when $d_{X}-d_{Y}-E\leq0$ (as in the
bottom figure). 
\item With probability $2\left(1-p_{X}\right)p_{X}\cdot\left(1-p_{Y}\right)p_{Y}$
the $x$'s and $y$'s are of correlated signs. Notice that the distance
between $\left|d_{X}+d_{Y}-E\right|$ and $\left|-d_{X}-d_{Y}-E\right|$
(marked by the dotted lines) is either $2E$ (as in both figures)
or $2d_{X}+2d_{Y}$ when $d_{X}+d_{Y}-E\leq0$ (not shown).\end{itemize}
\end{figure}

Assume without loss of generality that $d_{Y}<d_{X}$ and $E>0$.
Recall (Fact \ref{fac: (z1-z2)}) that we can write the variance in
terms of the expected squared distance between evaluations. Then,
summing over the different possible signs of $X'$ and $Y'$ we have
\begin{eqnarray*}
{\Var}\left|X'+Y'-E\right| & = & \frac{1}{2}{\E}_{x_{1},x_{2},y_{1},y_{2}\sim X'\times X'\times Y'\times Y'}\left(\left|x_{1}+y_{1}-E\right|-\left|x_{2}+y_{2}-E\right|\right)^{2}\\
 & \geq & \left(1-p_{X}\right)^{2}\cdot\left(1-p_{Y}\right)p_{Y}\cdot\left(\left|-d_{X}-d_{Y}-E\right|-\left|-d_{X}+d_{Y}-E\right|\right)^{2}+\\
 &  & p_{X}^{2}\cdot\left(1-p_{Y}\right)p_{Y}\cdot\left(\left|d_{X}-d_{Y}-E\right|-\left|d_{X}+d_{Y}-E\right|\right)^{2}+\\
 &  & \left(1-p_{X}\right)p_{X}\cdot\left(1-p_{Y}\right)p_{Y}\cdot\left(\left|-d_{X}-d_{Y}-E\right|-\left|d_{X}+d_{Y}-E\right|\right)^{2}\\
 & \geq & \left(\left(1-p_{X}\right)^{2}+p_{X}^{2}\right)\cdot\left(1-p_{Y}\right)p_{Y}\cdot\underbrace{\min\left\{ \left|2d_{X}-2E\right|,2d_{Y}\right\} ^{2}}_{\left(a\right)}+\\
 &  & \left(1-p_{X}\right)p_{X}\cdot p_{Y}\left(1-p_{Y}\right)\cdot\underbrace{\min\left\{ 2E,2d_{X}+2d_{Y}\right\} ^{2}}_{\left(b\right)}
\end{eqnarray*}
(Where the first inequality follows by taking the expectation over
the different possible signs of $X'$, $Y'$ (see also Figure \ref{fig:const_abs_value});
the second follows by taking the minimum over the possible signs of
the quantities in absolute values;)

We next claim that 
\begin{equation}
\left(a\right)+\left(b\right)\geq d_{Y}^{2}\label{eq:(a)+(b)>dY}
\end{equation}
If $2E\geq d_{Y}$, \eqref{eq:(a)+(b)>dY} is immediate. If $2E<d_{Y}$,
then \eqref{eq:(a)+(b)>dY} follows because $2d_{X}-2E\ge d_{x}\ge d_{Y}$. 

Therefore, 
\begin{eqnarray*}
{\Var}\left|X'+Y'-E\right| & \geq & \left(1-p_{X}\right)p_{X}\cdot p_{Y}\left(1-p_{Y}\right)\cdot d_{Y}^{2}\\
 & \geq & \frac{{\Var}X'}{4\left({\Var}\overline{X}+E^{2}\right)}\cdot p_{Y}\left(1-p_{Y}\right)\cdot\frac{\left(2d_{Y}\right)^{2}}{4}\\
 & = & \frac{{\Var}X'}{16\left({\Var}\overline{X}+E^{2}\right)}\cdot{\Var}Y'\\
 & = & \frac{{\Var}\left(X'-E\right){\Var}\left(Y'-E\right)}{16\left({\Var}\overline{X}+E^{2}\right)}.
\end{eqnarray*}
(Where the first inequality follows from \eqref{eq:(a)+(b)>dY} and
$\left(1-p_{X}\right)^{2}+p_{X}^{2}>\left(1-p_{X}\right)p_{X}$; and
the second inequality follows by \eqref{eq: (1-px) * px lower bound}.)
\end{proof}

\section{\label{sec:Tightness-of-results}Tightness of results}

\subsection{Tightness of \label{sub:Tightness-of-the-main}the main result}

The premise of main result, corollary \ref{fkn-like}, requires $f$
to be $\left(\epsilon\cdot{\Var}f\right)$-close to a sum of independent
functions. One may hope to avoid this factor of ${\Var}f$ and achieve
a constant ratio between $\epsilon$ in the premise and $\left(K\cdot\epsilon\right)$
in the conclusion, as in the FKN Theorem. However, we show that the
dependence on ${\Var}f$ is necessary.
\begin{lemma*}
\textup{(Lemma \ref{tightness of main result-1})} Corollary \ref{fkn-like}
is tight up to a constant factor. In particular, the factor ${\Var}f$
is necessary. 

More precisely, there exists a sequence of functions $f^{\left(m\right)}\colon\left\{ \pm1\right\} ^{2m}\rightarrow\left\{ \pm1\right\} $
and partitions $\left(I_{1}^{\left(m\right)},I_{2}^{\left(m\right)}\right)$
such that the restrictions $\left(f_{1}^{\left(m\right)},f_{2}^{\left(m\right)}\right)$
of $f^{\left(m\right)}$ to variables in $I_{j}^{\left(m\right)}$
satisfy 
\[
\sum_{S\colon\exists j,\, S\subseteq I_{j}^{\left(m\right)}}\hat{f}{}^{2}\left(S\right)=1-O\left(2^{-m}\cdot{\Var}f\right).
\]
but for every $j\in\left\{ 1,2\right\} $
\[
\left\Vert f^{\left(m\right)}-f_{j}^{\left(m\right)}-\widehat{f^{\left(m\right)}}\left(\emptyset\right)\right\Vert _{2}^{2}=\Theta\left(2^{-m}\right)=\omega\left(2^{-m}\cdot{\Var}f\right).
\]
\end{lemma*}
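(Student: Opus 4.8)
\noindent The plan is to exhibit an explicit family $\left(f^{(m)}\right)$ together with the balanced partition $I_1^{(m)}=\{1,\dots,m\}$, $I_2^{(m)}=\{m+1,\dots,2m\}$, and then verify both displayed estimates by a short Fourier computation; the content is entirely in choosing $f^{(m)}$ correctly. I would use the ``XOR of two ANDs on disjoint blocks''. Concretely, write the $\{0,1\}$-valued block indicators
\[
u(x)=\prod_{i\in I_1^{(m)}}\frac{1+x_i}{2},\qquad v(x)=\prod_{i\in I_2^{(m)}}\frac{1+x_i}{2},
\]
and set $f^{(m)}:=1-2\left(u-v\right)^2$. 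Since $u,v\in\{0,1\}$ we have $\left(u-v\right)^2=u+v-2uv$, so $f^{(m)}=1-2u-2v+4uv$ is $\{\pm1\}$-valued and equals $-1$ precisely when exactly one of the two blocks is all-ones; in particular $f^{(m)}$ is overwhelmingly $+1$.

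\noindent The next step is to read off the Fourier expansion from $u=2^{-m}\sum_{S\subseteq I_1}\chi_S$, $v=2^{-m}\sum_{T\subseteq I_2}\chi_T$ and $uv=2^{-2m}\sum_{S\subseteq I_1,\,T\subseteq I_2}\chi_{S\cup T}$ (disjoint unions). This gives $\widehat{f^{(m)}}(S)=-2\cdot 2^{-m}+4\cdot 2^{-2m}$ for every nonempty $S$ contained in a single block, $\widehat{f^{(m)}}(S\cup T)=4\cdot 2^{-2m}$ for every crossing set ($\emptyset\neq S\subseteq I_1$, $\emptyset\neq T\subseteq I_2$), and $\widehat{f^{(m)}}(\emptyset)=\left(1-2\cdot 2^{-m}\right)^2$. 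Hence, by Parseval, the weight on the crossing coefficients is $\left(2^m-1\right)^2\cdot 16\cdot 2^{-4m}=\Theta\left(2^{-2m}\right)$, so $\sum_{S:\exists j,\,S\subseteq I_j^{(m)}}\widehat{f^{(m)}}(S)^2=1-\Theta\left(2^{-2m}\right)$. Since $f^{(m)}$ is Boolean with $\Pr\left[f^{(m)}=-1\right]=2\cdot 2^{-m}\left(1-2^{-m}\right)$, we get ${\Var}f^{(m)}=4\Pr\left[f^{(m)}=1\right]\Pr\left[f^{(m)}=-1\right]=\Theta\left(2^{-m}\right)$, and therefore the crossing weight is $\Theta\left(2^{-m}\cdot{\Var}f^{(m)}\right)=O\left(2^{-m}\cdot{\Var}f^{(m)}\right)$, which is the first displayed estimate.

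\noindent For the second estimate, by Fact~\ref{fac:l2_norm-ft} we have $\left\Vert f^{(m)}-f_j^{(m)}-\widehat{f^{(m)}}(\emptyset)\right\Vert_2^2=\sum_{S\nsubseteq I_j^{(m)}}\widehat{f^{(m)}}(S)^2$, and this sum splits into the $2^m-1$ nonempty subsets of the \emph{other} block --- each coefficient of magnitude $2\cdot 2^{-m}-4\cdot 2^{-2m}$, contributing $\Theta\left(2^{-m}\right)$ in total --- plus the crossing coefficients, contributing only $\Theta\left(2^{-2m}\right)$. Hence for both $j\in\{1,2\}$,
\[
\left\Vert f^{(m)}-f_j^{(m)}-\widehat{f^{(m)}}(\emptyset)\right\Vert_2^2=\Theta\left(2^{-m}\right)=\omega\left(2^{-2m}\right)=\omega\left(2^{-m}\cdot{\Var}f^{(m)}\right),
\]
which is exactly the claimed conclusion.

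\noindent There is no technically difficult step; the only subtlety --- and the reason the factor ${\Var}f$ in Corollary~\ref{fkn-like} is genuinely unavoidable --- is getting the two orders of magnitude to line up. One needs ${\Var}f^{(m)}$ small enough that $2^{-m}\cdot{\Var}f^{(m)}$ is asymptotically negligible compared to the $\Theta\left(2^{-m}\right)$ distance from a single-block dictatorship (this is the separation), and simultaneously the genuinely cross-partition Fourier weight must be smaller still. The XOR-of-ANDs construction does both (variance $\Theta\left(2^{-m}\right)$, crossing weight $\Theta\left(2^{-2m}\right)$); a more naive candidate such as $1-2uv$ (the AND of both blocks) fails, since there the crossing weight $\Theta\left(2^{-2m}\right)$ is already of the same order as the whole variance, leaving no gap.
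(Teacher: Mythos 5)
Your construction is correct, and it is essentially the same as the paper's: the paper takes the OR of two block-ANDs (the non-balanced tribes function) while you take the XOR of two block-ANDs, and in both cases the point is identical --- variance $\Theta\left(2^{-m}\right)$, cross-partition Fourier weight $\Theta\left(2^{-2m}\right)=\Theta\left(2^{-m}\cdot{\Var}f\right)$, and distance $\Theta\left(2^{-m}\right)$ from any single block. Your version even spells out the single-block distance via the explicit Fourier coefficients, which the paper only asserts, so no gap remains.
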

\begin{proof}
By example. Let 
\begin{eqnarray*}
X & = & \bigwedge_{i=1}^{m}x_{i}\\
Y & = & \bigwedge_{i=1}^{m}y_{i}\\
f & = & X\vee Y,
\end{eqnarray*}
where we think of $-1$ as ``true'' and $1$ as ``false''.

The variance of $f$ is $\Theta\left(2^{-m}\right)$:
\begin{gather*}
{\Var}f=4\Pr\left[f=1\right]\Pr\left[f=-1\right]=4\left(1-\Theta\left(2^{-m}\right)\right)\Theta\left(2^{-m}\right)=\Theta\left(2^{-m}\right).
\end{gather*}
Also, $f$ is $O\left(2^{-2m}\right)$-close to a sum of independent
functions:
\begin{eqnarray*}
f & = & \frac{X+Y+X\cdot Y-1}{2}\\
\left\Vert f-\left(X+Y-1\right)\right\Vert _{2}^{2} & = & \left\Vert \frac{\left(X-1\right)\left(Y-1\right)}{2}\right\Vert _{2}^{2}=4\cdot2^{-2m}.
\end{eqnarray*}
Yet, $f$ is $\Omega\left(2^{-m}\right)$-far from any function that
depends on either only the $x_{i}$'s or only the $y_{i}$'s.
\end{proof}

\subsection{Tightness of\label{sub:Tightness-of-the-aux lemma} Lemma \ref{lemma: |X+Y| > K*|X+E=00005BY=00005D|}}

Lemma \ref{lemma: |X+Y| > K*|X+E=00005BY=00005D|} compares the variance
of the absolute value of a sum of independent variables, to the variance
of the absolute value of each variable. Since both sides of the inequality
consider absolute values, it may seem as if we should only be increasing
the variation on the left side by summing independent variables. In
particular, one may hope that the inequality should hold trivially,
with $K_{0}=1$. We show that this is not the case.
\begin{claim}
(Claim \ref{cla: tightness of aux lemma}) A non-trivial constant
is necessary for Lemma \ref{lemma: |X+Y| > K*|X+E=00005BY=00005D|}.
More precisely, there exist two independent \emph{balanced} random
variables $\overline{X},\overline{Y}$, such that the following inequality
does not hold for any value $K_{0}<4/3$:
\[
{\Var}\left|\overline{X}+\overline{Y}\right|\geq\frac{\max\left\{ {\Var}\left|\overline{X}\right|,{\Var}\left|\overline{Y}\right|\right\} }{K_{0}}
\]

\end{claim}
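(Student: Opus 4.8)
The plan is to prove the claim by exhibiting one explicit pair of balanced random variables for which the inequality becomes a tight equality at $K_{0}=4/3$, hence fails for every smaller constant. Concretely, I take $\overline{X}$ uniform on $\left\{+1,-1\right\}$, and $\overline{Y}$ with $\Pr\left[\overline{Y}=+2\right]=\Pr\left[\overline{Y}=-2\right]=\tfrac14$ and $\Pr\left[\overline{Y}=0\right]=\tfrac12$. Both are balanced ($\E\overline{X}=\E\overline{Y}=0$) and independent by construction.

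The first step is to evaluate the right-hand side. Since $\left|\overline{X}\right|\equiv1$ we get ${\Var}\left|\overline{X}\right|=0$; and $\left|\overline{Y}\right|$ equals $2$ or $0$, each with probability $\tfrac12$, so ${\Var}\left|\overline{Y}\right|=\E\left|\overline{Y}\right|^{2}-\left(\E\left|\overline{Y}\right|\right)^{2}=2-1=1$, giving $\max\left\{{\Var}\left|\overline{X}\right|,{\Var}\left|\overline{Y}\right|\right\}=1$. The second step is to evaluate the left-hand side. Conditioning on the value of $\overline{Y}$, the sum $\overline{X}+\overline{Y}$ equals $\pm3$ exactly when $\overline{Y}=\pm2$ and $\overline{X}$ carries the matching sign (probability $\tfrac18$ each), and equals $\pm1$ in the remaining cases (probability $\tfrac38$ each). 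Hence $\left|\overline{X}+\overline{Y}\right|$ equals $3$ with probability $\tfrac14$ and $1$ with probability $\tfrac34$, so ${\Var}\left|\overline{X}+\overline{Y}\right|=(3-1)^{2}\cdot\tfrac14\cdot\tfrac34=\tfrac34$.

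Combining the two computations, the asserted inequality ${\Var}\left|\overline{X}+\overline{Y}\right|\geq\max\left\{{\Var}\left|\overline{X}\right|,{\Var}\left|\overline{Y}\right|\right\}/K_{0}$ would read $\tfrac34\geq1/K_{0}$, i.e. $K_{0}\geq4/3$; therefore it is violated for every $K_{0}<4/3$, which is exactly what the claim requires, and in particular it shows $K_{0}>1$ is unavoidable. There is no genuine obstacle in the argument --- it is a finite computation --- so the only nonroutine part is locating a clean witness; the guiding intuition is that $\overline{Y}$ already has a spread-out absolute value supported on $\left\{0,2\right\}$, and adding the sign perturbation $\overline{X}$ relocates the absolute value onto $\left\{1,3\right\}$ in an \emph{unbalanced} way (the larger value $3$ is attained only a quarter of the time), which strictly decreases the variance of the absolute value from $1$ to $\tfrac34$.
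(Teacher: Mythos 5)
Your proposal is correct, and it uses exactly the same witness as the paper (the paper's $X$ supported on $\{0,\pm2\}$ with probabilities $\tfrac12,\tfrac14,\tfrac14$ and $Y$ uniform on $\{\pm1\}$, with your labels swapped), arriving at the identical computation ${\Var}\left|\overline{X}+\overline{Y}\right|=\tfrac34$ versus $\max\left\{{\Var}\left|\overline{X}\right|,{\Var}\left|\overline{Y}\right|\right\}=1$. Your write-up just spells out the conditioning and variance calculations in a bit more detail than the paper does.
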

(In particular, it is interesting to note that $K_{0}>1$.)
\begin{proof}
By example. Let
\begin{eqnarray*}
\Pr\left[X=0\right] & = & \frac{1}{2}\\
\Pr\left[X=\pm2\right] & = & \frac{1}{4}\\
\Pr\left[Y=\pm1\right] & = & \frac{1}{2}.
\end{eqnarray*}
Then we have that
\[
{\E}X={\E}Y=0
\]
\begin{eqnarray*}
\Pr\left[\left|X+Y\right|=1\right] & = & \frac{3}{4}\\
\Pr\left[\left|X+Y\right|=3\right] & = & \frac{1}{4},
\end{eqnarray*}
and therefore
\[
{\Var}\left|X+Y\right|=\frac{3}{4}=\frac{3}{4}{\Var}\left|X\right|.
\]

\end{proof}

\section{\label{sec:Conjectures-and-extensions}Conjectures and extensions}

While the dependence on the variance in Corollary \ref{fkn-like}
is tight, it seems counter-intuitive. We believe that it is possible
to come up with a structural characterization instead. 

Observe that the function used for the counter example in Lemma \ref{tightness of main result-1}
is essentially the (non-balanced) \emph{tribes} function, i.e. $OR$
of two $AND$'s. All the extreme examples we have discovered so far
have a similar structure of an independent Boolean function on each
subset of the variables (e.g. $AND$ on a subset of the variables),
and then a ``central'' Boolean function that takes as inputs the
outputs of the independent functions (e.g. $OR$ of all the $AND$'s). 

We conjecture that such a \emph{composition} of Boolean functions
is essentially the only way to construct counterexamples to the ``naive
extension'' of the FKN Theorem. In other words, if a Boolean function
is close to linear with respect to a partition of the variables, then
it is close to the application of a central Boolean function $g$
on the outputs of independent Boolean functions $g_{j}$'s, one over
each subset $I_{j}$. Formally,
\begin{conjecture}
Let $f\colon\left\{ \pm1\right\} ^{m}\rightarrow\left\{ \pm1\right\} $
be a Boolean function, $\left(I_{j}\right)_{j=1}^{n}$ a partition
of $\left[m\right]$. Suppose that $f$ is concentrated on coefficients
that do not cross the partition, i.e.:
\[
\sum_{S\colon\exists j,\, S\subseteq I_{j}}\hat{f}{}^{2}\left(S\right)\geq1-\epsilon.
\]
Then there exist a ``central'' Boolean function $g\colon\left\{ \pm1\right\} ^{n}\rightarrow\left\{ \pm1\right\} $
and a Boolean function on each subset $h_{j}\colon\left\{ \pm1\right\} ^{\left|I_{j}\right|}\rightarrow\left\{ \pm1\right\} $
such that the composition of $g$ with the $h_{j}$'s is a good approximation
of $f$. I.e. for some universal constant $K$, 
\[
\left\Vert f\left(X\right)-g\left(h_{1}\left(\left(x_{i}\right)_{i\in I_{1}}\right),h_{2}\left(\left(x_{i}\right)_{i\in I_{2}}\right),\dots,h_{n}\left(\left(x_{i}\right)_{i\in I_{n}}\right)\right)\right\Vert _{2}^{2}\leq K\cdot\epsilon.
\]

\end{conjecture}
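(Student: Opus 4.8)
The natural plan is to imitate the reduction behind Corollary~\ref{fkn-like}, but now at the level of the blocks: extract from each block the Boolean function $h_j$ that $f$ effectively depends on, and then take $g$ to be the best Boolean approximation of ${\E}\left[f\mid h_1,\dots,h_n\right]$. Write $F_j:={\E}\left[f\mid x_{I_j}\right]=\sum_{S\subseteq I_j}\widehat f\left(S\right)\chi_S\in\left[-1,1\right]$ for the restriction of $f$ to block $I_j$; the hypothesis says exactly that $\left\Vert f-\bigl(\sum_j F_j-\left(n-1\right)\widehat f\left(\emptyset\right)\bigr)\right\Vert_2^2\le\epsilon$, where the inner expression is the projection $\Psi$ of $f$ onto the non-crossing characters. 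Set $h_j:=\mathrm{sign}\left(F_j\right)$, and let $g\colon\left\{\pm1\right\}^n\to\left\{\pm1\right\}$ be the pointwise Boolean rounding of ${\E}\left[f\mid h_1,\dots,h_n\right]$. Since the rounded function is $\left[-1,1\right]$-valued on every atom of $\left(h_1,\dots,h_n\right)$, the rounding error on an atom is controlled by the conditional variance there, so with a universal constant and \emph{no} dependence on $n$,
\[
\left\Vert f-g\left(h_1,\dots,h_n\right)\right\Vert_2^2\ \le\ 2\,{\E}\left[{\Var}\left(f\mid h_1,\dots,h_n\right)\right].
\]
It therefore suffices to prove ${\E}\left[{\Var}\left(f\mid h_1,\dots,h_n\right)\right]=O\left(\epsilon\right)$, i.e. that the Boolean block functions already nearly determine $f$.

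To reduce this further, approximate $\Psi$ by $L\left(h\right):=\sum_j{\E}\left[F_j\mid h_j\right]-\left(n-1\right)\widehat f\left(\emptyset\right)$, which is affine in each $h_j$ and hence a function of $\left(h_1,\dots,h_n\right)$. The summands $F_j-{\E}\left[F_j\mid h_j\right]$ are independent and mean-zero, so $\left\Vert\Psi-L\left(h\right)\right\Vert_2^2=\sum_j{\E}\left[{\Var}\left(F_j\mid h_j\right)\right]$, and since ${\E}\left[f\mid h\right]$ is the $L^2$-best function of $h$,
\[
{\E}\left[{\Var}\left(f\mid h_1,\dots,h_n\right)\right]\ \le\ \left\Vert f-L\left(h\right)\right\Vert_2^2\ \le\ 2\epsilon+2\sum_{j=1}^{n}{\E}\left[{\Var}\left(F_j\mid h_j\right)\right].
\]
So the whole conjecture comes down to the single estimate $\sum_{j}{\E}\left[{\Var}\left(F_j\mid\mathrm{sign}\,F_j\right)\right]=O\left(\epsilon\right)$: summed over all $n$ blocks, $F_j$ fluctuates by only $O\left(\epsilon\right)$ inside each of its two sign-classes.

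For the per-block input I would use the identity of Fact~\ref{fac: (z1-z2)}. Fixing the coordinates $w$ outside $I_j$, the hypothesis gives $f\left(\cdot,w\right)\approx F_j+C_w$ as functions on $\left\{\pm1\right\}^{I_j}$, with $C_w$ constant and the $w$-averaged squared error at most $\epsilon$; subtracting two evaluations the constant cancels, and using that $f\left(\cdot,w\right)$ is $\pm1$-valued one obtains ${\E}_{x,x'}\,\mathrm{dist}\!\left(F_j\left(x\right)-F_j\left(x'\right),\left\{-2,0,2\right\}\right)^2\le 4\epsilon$ for each $j$. Since $F_j\in\left[-1,1\right]$, whenever $x,x'$ lie in the \emph{same} sign-class this distance is just $\left|F_j\left(x\right)-F_j\left(x'\right)\right|$, so $\sum_{\sigma\in\left\{\pm1\right\}}\Pr\!\left[\mathrm{sign}\,F_j=\sigma\right]^2{\Var}\left(F_j\mid\mathrm{sign}\,F_j=\sigma\right)\le 2\epsilon$. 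When both sign-classes of the block have probability bounded away from $0$, this already yields ${\E}\left[{\Var}\left(F_j\mid\mathrm{sign}\,F_j\right)\right]=O\left(\epsilon\right)$ for that block.

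The main obstacle — and, I suspect, the reason this remains only a conjecture — is exactly the complementary case, together with the uniformity of the constant. When one sign-class of a block is rare, the last inequality constrains the conditional variance inside it only with a weight proportional to its small probability, and by itself does not give an $O\left(\epsilon\right)$ bound; yet in every example we know (e.g. $f=\bigvee_j\bigwedge_{i\in I_j}x_i$, where each $F_j$ takes only two values and ${\Var}\left(F_j\mid\mathrm{sign}\,F_j\right)\equiv0$) this conditional variance is in fact tiny. Ruling out a block whose restriction varies inside a rare sign-class in a way invisible to the crossing Fourier weight $\sum_{S\ \mathrm{crossing}}\widehat f\left(S\right)^2\le\epsilon$, and then summing the per-block bounds against that weight without losing a factor of $n$, is the crux: the naive ``$f$ is close to an affine threshold of $\sum_j h_j$'' argument and the naive ``peel one block and recurse'' induction both leak such a factor, essentially because the additive constant $\left(n-1\right)\widehat f\left(\emptyset\right)$ accumulates the $n$ small per-block errors. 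A plausible way around the induction loss is to first settle the two-block case by a direct conditioning-and-case-analysis in the style of the proof of Lemma~\ref{lemma: Var|X+Y+c| > K * VarX * VarY}, and then strengthen the inductive hypothesis to record how little the central function depends on low-influence coordinates, so that the recursion closes with a fixed constant.
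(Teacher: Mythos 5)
This statement is posed in the paper as an open \emph{conjecture} (Section \ref{sec:Conjectures-and-extensions}); the paper offers no proof of it, so there is nothing to compare your argument against, and the question is only whether your proposal actually closes the problem. It does not, and you correctly say so yourself. Your reductions up to the displayed inequality ${\E}\left[{\Var}\left(f\mid h_{1},\dots,h_{n}\right)\right]\le 2\epsilon+2\sum_{j}{\E}\left[{\Var}\left(F_{j}\mid h_{j}\right)\right]$ are sound (the rounding bound $2-2\left|\mu\right|\le 2\left(1-\mu^{2}\right)$ and the independence of the $F_{j}-{\E}\left[F_{j}\mid h_{j}\right]$ both check out), but the remaining estimate $\sum_{j}{\E}\left[{\Var}\left(F_{j}\mid\mathrm{sign}\,F_{j}\right)\right]=O\left(\epsilon\right)$ is essentially the whole difficulty, and the two obstacles you name are real. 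First, your two-point argument only yields $\sum_{\sigma}\Pr\left[\sigma\right]^{2}{\Var}\left(F_{j}\mid\sigma\right)=O\left(\epsilon_{j}\right)$, whereas you need the weight $\Pr\left[\sigma\right]$ to the first power; when a sign-class is rare this loses an unbounded factor, and the rare-class regime is exactly the regime of the tribes-type examples (Lemma \ref{tightness of main result-1}) that make the conjecture delicate. Second, your per-block error $\epsilon_{j}$ is the crossing weight of characters meeting $I_{j}$, and a single crossing character can meet up to $n$ blocks, so $\sum_{j}\epsilon_{j}$ can be $n\epsilon$ rather than $O\left(\epsilon\right)$; your concluding remarks about strengthening the induction are a plan, not an argument.

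One further caution: even granting the reduction, the choice $h_{j}=\mathrm{sign}\left(F_{j}\right)$ is not obviously the right one. A block could have ${\Var}\left(F_{j}\mid\mathrm{sign}\,F_{j}\right)$ large while some other Boolean function of $x_{I_{j}}$ (not a threshold of $F_{j}$) still determines $f$'s dependence on that block; the conjecture only asserts the existence of \emph{some} $h_{j}$, so a proof may need to construct $h_{j}$ adaptively rather than by rounding the restriction. As it stands, your write-up is a useful framing of the conjecture and an accurate diagnosis of where it is hard, but it is not a proof, and it should not be presented as one.
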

Intuitively, this conjecture claims that the central function only
needs to \emph{know} one bit of information on each subset in order
to approximate $f$. 

We believe that such a conjecture could have useful applications because
one can often deduce properties of the composition of independent
functions $f=g\left(h\left(x_{I_{1}}\right),h\left(x_{I_{2}}\right),\dots,h\left(x_{I_{n}}\right)\right)$
from the properties of the composed functions $g$ and $h$. For example
if $f$, $g$, and $h$ are as above, then the total influence of
$f$ is the product of the total influences of $g$ and $h$.

In fact, we believe that an even stronger claim holds. It seems that
for all the Boolean functions that are almost linear with respect
to a partition of the variables, the ``central'' function $g$ is
either an $OR$ or an $AND$ of some of the functions on the subsets
$h_{j}$. Formally,
\begin{conjecture}
\textup{(Stronger variant)} Let $f\colon\left\{ \pm1\right\} ^{m}\rightarrow\left\{ \pm1\right\} $
be a Boolean function, $\left(I_{j}\right)_{j=1}^{n}$ a partition
of $\left[m\right]$. Suppose that $f$ is concentrated on coefficients
that do not cross the partition, i.e.:
\[
\sum_{S\colon\exists j,\, S\subseteq I_{j}}\hat{f}{}^{2}\left(S\right)\geq1-\epsilon.
\]
Then there exist Boolean functions $h_{j}\colon\left\{ \pm1\right\} ^{\left|I_{j}\right|}\rightarrow\left\{ \pm1\right\} $
for each $j\in\left[n\right]$ such that either the $OR$ or the $AND$
of those $h_{j}$'s is a good approximation of $f$. I.e. for some
universal constant $K$, 
\begin{gather*}
\left\Vert f\left(X\right)-OR_{j\in\left[n\right]}\left(h_{j}\left(\left(x_{i}\right)_{i\in I_{j}}\right)\right)\right\Vert _{2}^{2}\leq K\cdot\epsilon\\
\mbox{-or-}\\
\left\Vert f\left(X\right)-AND_{j\in\left[n\right]}\left(h_{j}\left(\left(x_{i}\right)_{i\in I_{j}}\right)\right)\right\Vert _{2}^{2}\leq K\cdot\epsilon.
\end{gather*}
\end{conjecture}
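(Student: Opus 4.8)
The natural route to such a statement is to split on the size of ${\Var}f$, mirroring the high-variance/low-variance dichotomy inside the proof of Lemma~\ref{lemma: Var|X+Y+c| > K * VarX * VarY}. Fix a small absolute constant $\delta>0$ (to be chosen last) and assume $\epsilon$ is small, as otherwise the statement holds vacuously for large $K$. When ${\Var}f\geq\delta$, the hypothesis is exactly that of Corollary~\ref{fkn-like} with $\epsilon'=\epsilon/{\Var}f$, so that corollary yields $k\in[n]$ with $\|f-f_{k}-\widehat f(\emptyset)\|_{2}^{2}\leq (K_{2}+2)\,\epsilon/\delta=O(\epsilon)$. Since $f$ is Boolean and $f_{k}+\widehat f(\emptyset)$ is real-valued and $O(\epsilon)$-close to it, rounding $f_{k}+\widehat f(\emptyset)$ pointwise to $\pm1$ gives a Boolean function $h_{k}$ on the variables of $I_{k}$ with $\|f-h_{k}\|_{2}^{2}\leq 4\,\|f-f_{k}-\widehat f(\emptyset)\|_{2}^{2}=O(\epsilon)$ (on every input where the rounding disagrees with $f$, the pre-rounding value already differed from $f\in\{\pm1\}$ by at least $1$). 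Setting $h_{j}\equiv +1$ (``false'') for every $j\neq k$ makes $\mathrm{OR}_{j}h_{j}\equiv h_{k}$, which settles this case with $K=O(1/\delta)$.

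The remaining, and genuinely hard, regime is ${\Var}f<\delta$. Replacing $f$ by $-f$ if necessary --- which, by De Morgan, swaps the OR conclusion for the AND conclusion --- we may assume $\widehat f(\emptyset)=\E f$ is close to $+1$, i.e. $\Pr[f=-1]$ is small; this is exactly the regime of the tribes counterexample from Lemma~\ref{tightness of main result-1}. The plan is to localize the rare event $\{f=-1\}$ to individual subsets. Viewing the restrictions $Z_{j}=f_{j}$ as independent mean-zero random variables, write $Z=\sum_{j}Z_{j}+\widehat f(\emptyset)$, with $V={\Var}\sum_{j}Z_{j}=\sum_{j}{\Var}Z_{j}$ small and $\|f-Z\|_{2}^{2}\leq\epsilon$. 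On the high-probability event $\{f=+1\}$ one has $\sum_{j}Z_{j}\approx 0$, whereas on $\{f=-1\}$ one needs $\sum_{j}Z_{j}\approx -2$; since the $Z_{j}$ are independent with total variance $V$, two simultaneous moderate negative deviations of distinct summands are a lower-order event as $V\to 0$ (the same second-moment phenomenon exploited in Claims~\ref{cla: const  | |} and~\ref{cla:For-every x,x,y,y}), so almost every ``$f=-1$'' input should be caused by a single very negative $Z_{j}$. Accordingly one would define $h_{j}(x_{I_{j}})=-1$ exactly when $\Pr[\,f=-1\mid x_{I_{j}}\,]$ exceeds a fixed threshold, and $h_{j}=+1$ otherwise, and then attempt to prove $\mathrm{OR}_{j}h_{j}\approx f$.

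The main obstacle will be bounding the error of this OR, and in particular the \emph{false positives}: several subsets may each be ``somewhat likely, but not conclusively likely'' to force $f=-1$, so that $\mathrm{OR}_{j}h_{j}$ is true on a set of inputs on which $f$ is typically $+1$, and bounding the measure of such inputs by $O(\epsilon)$ appears to require quantitative anti-concentration for the joint distribution of the $Z_{j}$'s --- something Theorem~\ref{sum of sequence} does not supply, since it controls only one global variance rather than the joint large-deviation structure of the sum. We expect a correct argument to require a finer decomposition of the input space, plausibly an inductive peeling that removes one ``forcing'' subset at a time and recurses, together with a careful choice of the threshold defining $h_{j}$; one would likely first establish the weaker conjecture (producing an arbitrary central Boolean $g$ rather than specifically an OR or AND), though that weaker statement seems to demand the same localization step, which is why both are stated only as conjectures.
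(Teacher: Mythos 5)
This statement is stated in the paper only as a conjecture; the authors give no proof of it, so there is no argument of theirs to match yours against, and your proposal does not close the gap either --- as you yourself say explicitly. Your treatment of the bounded-variance regime is correct and is essentially the observation that makes the conjecture plausible: when ${\Var}f\geq\delta$ the hypothesis is that of Corollary \ref{fkn-like} with $\epsilon'=\epsilon/{\Var}f\leq\epsilon/\delta$, the corollary gives a block $I_{k}$ with $\|f-f_{k}-\widehat f(\emptyset)\|_{2}^{2}=O(\epsilon)$, pointwise rounding costs at most a factor $4$ because $f$ is $\pm1$-valued, and padding with $h_{j}\equiv+1$ makes the $OR$ degenerate to $h_{k}$. (One should also note the trivial case $\epsilon=\Omega(\delta)$, where any $\pm1$-valued candidate is $4$-close to $f$, so the constant stays universal.) But that regime is exactly where the conjecture adds nothing over Corollary \ref{fkn-like}; the entire content of the statement lives in the regime ${\Var}f=o(1)$, which Lemma \ref{tightness of main result-1} shows cannot be handled by the corollary alone.

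For that regime your proposal is a plan, not a proof, and the missing step is precisely the one you identify: after defining $h_{j}$ by thresholding, you must bound the measure of inputs where $\mathrm{OR}_{j}h_{j}=-1$ but $f=+1$ (and, symmetrically, where several blocks are each ``partially negative'' so that no single $h_{j}$ fires even though $f=-1$). Nothing proved in the paper supplies this: Theorem \ref{sum of sequence} and the two case-analysis claims control only ${\Var}\bigl|\sum_{j}f_{j}+\widehat f(\emptyset)\bigr|$, i.e.\ a single global second moment, whereas your argument needs a joint large-deviation/anti-concentration statement for the independent summands $f_{j}$ (ruling out two simultaneous moderate negative deviations, and tying the event $\{f=-1\}$ to a single block) that is genuinely stronger. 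There is also a secondary issue in the construction itself: $h_{j}$ is defined via $\Pr[f=-1\mid x_{I_{j}}]$, a quantity averaged over the other blocks, and relating it to the pointwise behaviour of $f$ requires an additional argument (the premise only says $f$ is close in $L^{2}$ to $\sum_{j}f_{j}+\widehat f(\emptyset)$, not that the conditional structure is clean). So the verdict is: correct and honest partial progress on the easy case, with the essential case --- the reason the statement is a conjecture --- still open.
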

\begin{acknowledgement*}
An earlier version of this paper appeared before as the Masters thesis
of the first author. 

We are grateful to anonymous referees for pointing out errors in a
previous draft, as well as many helpful comments and suggestions.
\end{acknowledgement*}
\bibliographystyle{plain}
\bibliography{paper}

\appendix

\section{Proofs of preliminary facts}

Below, we bring missing proofs of facts from Section \ref{sec:Preliminaries}.
All these proofs can be found elsewhere (e.g. \cite{ODonnell14-book}),
and are brought here only for completeness.
\begin{fact}
(Fact \ref{fac:l2_norm-ft})
\[
\left\Vert f-g\right\Vert _{2}^{2}=\sum\left(\widehat{f}\left(S\right)-\widehat{g}\left(S\right)\right)^{2}.
\]
\end{fact}
\begin{proof}
\[
\left\Vert f-g\right\Vert _{2}^{2}=\sum\left(\widehat{f-g}\left(S\right)\right)^{2}=\sum\left(\widehat{f}\left(S\right)-\widehat{g}\left(S\right)\right)^{2}.
\]
\end{proof}
\begin{fact}
(Fact \ref{fac:var-ft})
\[
{\Var}f=\sum_{S\neq\emptyset}\widehat{f}\left(S\right)^{2}.
\]
\end{fact}
\begin{proof}
\begin{eqnarray*}
{\Var}f & = & {\E}\left[f^{2}\right]-\left({\E}f\right)^{2}\\
 & = & {\E}\left[\left(\sum_{S}\widehat{f}\left(S\right)\chi_{S}\right)^{2}\right]-\left({\E}\sum_{S}\widehat{f}\left(S\right)\chi_{S}\right)^{2}\\
 & = & \left(\sum_{S,T}\widehat{f}\left(S\right)\widehat{f}\left(T\right){\E}\chi_{S}\chi_{T}\right)-\left(\sum_{S}\widehat{f}\left(S\right){\E}\chi_{S}\right)^{2}\\
 & = & \left(\sum_{S}\widehat{f}\left(S\right)^{2}\right)-\left(\widehat{f}\left(\emptyset\right)\right)^{2}.
\end{eqnarray*}
\end{proof}
\begin{fact}
(Fact \ref{fac: (z1-z2)}) For any random variable X, \textup{
\[
{\Var}X=\frac{1}{2}\cdot{\E}_{x_{1},x_{2}\sim X\times X}\left(x_{1}-x_{2}\right)^{2}.
\]
}\end{fact}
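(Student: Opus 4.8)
The plan is to prove the identity $\Var X = \tfrac12\,\E_{x_1,x_2}(x_1-x_2)^2$ by a direct expansion of the square, using only linearity of expectation and the independence of the two copies $x_1,x_2\sim X\times X$. This is a routine algebraic manipulation, so I would present it compactly.

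First I would expand $(x_1-x_2)^2 = x_1^2 - 2x_1x_2 + x_2^2$ and take expectations term by term. Since $x_1$ and $x_2$ are independent and each distributed as $X$, we get $\E[x_1^2] = \E[x_2^2] = \E[X^2]$ and $\E[x_1x_2] = \E[x_1]\,\E[x_2] = (\E X)^2$. Hence $\E_{x_1,x_2}(x_1-x_2)^2 = 2\E[X^2] - 2(\E X)^2 = 2\Var X$, and dividing by $2$ gives the claim. I would also note that this quantity is exactly $\|X - X'\|_2^2$ where $X'$ is an independent copy, connecting it to Fact \ref{fac:var-l2_norm}; indeed one could alternatively derive it from $\Var X = \|X - \E X\|_2^2$ together with the $2$-relaxed triangle inequality, but the direct expansion is cleaner and gives the exact constant.

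There is no real obstacle here; the only thing to be careful about is invoking independence (so that the cross term factors) and using that both coordinates are genuinely distributed as $X$ (so that $\E[x_i^2]=\E[X^2]$). I would write it as follows.

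\begin{proof}
Let $x_1,x_2$ be independent copies of $X$. Expanding the square and using linearity of expectation,
\[
\E_{x_1,x_2\sim X\times X}\left(x_1-x_2\right)^2 = \E\left[x_1^2\right] - 2\,\E\left[x_1 x_2\right] + \E\left[x_2^2\right].
\]
Since $x_1$ and $x_2$ are each distributed as $X$, we have $\E[x_1^2]=\E[x_2^2]=\E[X^2]$, and since they are independent, $\E[x_1 x_2] = \E[x_1]\,\E[x_2] = (\E X)^2$. Therefore
\[
\E_{x_1,x_2\sim X\times X}\left(x_1-x_2\right)^2 = 2\E\left[X^2\right] - 2\left(\E X\right)^2 = 2\,\Var X,
\]
and dividing by $2$ yields the claim.
\end{proof}
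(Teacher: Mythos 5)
Your proof is correct and is essentially identical to the paper's: both expand $(x_1-x_2)^2$, use linearity of expectation and independence to factor the cross term, and conclude $\E(x_1-x_2)^2 = 2\E[X^2]-2(\E X)^2 = 2\Var X$. No issues.
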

\begin{proof}
\[
{\E}_{x_{1},x_{2}\sim X\times X}\left(x_{1}-x_{2}\right)^{2}={\E}x_{1}^{2}+{\E}x_{2}^{2}-2{\E}x_{1}x_{2}=2\left({\E}X^{2}-\left({\E}X\right)^{2}\right)=2{\Var}X.
\]
\end{proof}
\begin{fact}
(Fact \ref{fac:var-l2_norm})
\[
{\Var}X=\left\Vert X-{\E}X\right\Vert _{2}^{2}.
\]
\end{fact}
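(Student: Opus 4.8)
The plan is to unfold both sides in terms of expectations and verify that they coincide. By the definition of the squared $L^{2}$-norm given in Section \ref{sub:-squared-semi-metric}, we have $\left\Vert X-{\E}X\right\Vert_{2}^{2}={\E}\left[\left(X-{\E}X\right)^{2}\right]$, where ${\E}X$ is regarded as a (degenerate, constant) random variable. So it suffices to show that ${\E}\left[\left(X-{\E}X\right)^{2}\right]={\Var}X$.

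First I would expand the square inside the expectation: $\left(X-{\E}X\right)^{2}=X^{2}-2X\cdot{\E}X+\left({\E}X\right)^{2}$. Then, using linearity of expectation together with the fact that ${\E}X$ is a constant, ${\E}\left[\left(X-{\E}X\right)^{2}\right]={\E}\left[X^{2}\right]-2\left({\E}X\right)^{2}+\left({\E}X\right)^{2}={\E}\left[X^{2}\right]-\left({\E}X\right)^{2}$, which is exactly the definition of ${\Var}X$ recalled at the start of the Variance subsection. This closes the argument.

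There is no real obstacle here: the only ingredients are linearity of expectation and the definition of the $L^{2}$-squared semi-metric, both already in place. One could alternatively deduce the identity from Fact \ref{fac: (z1-z2)} — writing a random evaluation of the constant ${\E}X$ in place of one of the two independent copies — but the direct expansion above is shorter and fully self-contained, so that is the route I would take.
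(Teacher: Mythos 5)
Your proof is correct and is essentially identical to the paper's: both expand $\left(X-{\E}X\right)^{2}$ and apply linearity of expectation, merely working the chain of equalities in opposite directions. Nothing further is needed.
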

\begin{proof}
\[
{\E}\left[X^{2}\right]-\left({\E}X\right)^{2}={\E}\left[X^{2}\right]-2{\E}\left[X{\E}X\right]+{\E}\left[\left({\E}X\right)^{2}\right]={\E}\left[\left(X-{\E}X\right)^{2}\right].
\]
\end{proof}
\begin{fact}
(Fact \ref{fac:var-min_l2_norm})
\[
{\Var}X=\min_{E\in\mathbb{R}}\left\Vert X-E\right\Vert _{2}^{2}.
\]
\end{fact}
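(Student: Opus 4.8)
The plan is to reduce to Fact \ref{fac:var-l2_norm}, which already identifies $\Var X$ with $\left\Vert X-\E X\right\Vert_2^2$, and then to show that among all constants $E\in\mathbb{R}$ the choice $E=\E X$ attains the smallest value of $\left\Vert X-E\right\Vert_2^2=\E[(X-E)^2]$.

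First I would fix an arbitrary $E\in\mathbb{R}$ and decompose $X-E=(X-\E X)+(\E X-E)$, expanding the square:
\[
\left\Vert X-E\right\Vert_2^2=\E\left[(X-\E X)^2\right]+2(\E X-E)\,\E\left[X-\E X\right]+(\E X-E)^2.
\]
The cross term vanishes since $\E[X-\E X]=0$, leaving $\left\Vert X-E\right\Vert_2^2=\left\Vert X-\E X\right\Vert_2^2+(\E X-E)^2$. As $(\E X-E)^2\geq0$ with equality iff $E=\E X$, this shows simultaneously that the minimum over $E$ is attained at $E=\E X$ and that its value is $\left\Vert X-\E X\right\Vert_2^2$, which by Fact \ref{fac:var-l2_norm} equals $\Var X$.

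There is no genuine obstacle here; the only subtlety worth a remark is that we are minimizing over a semi-metric rather than a metric, but since the competitors $E$ are deterministic the quantity $\E[(X-E)^2]$ is an honest squared $L^2$ distance and the completion-of-squares argument applies verbatim. If one prefers not to invoke Fact \ref{fac:var-l2_norm}, the same computation can be run directly: expand $\E[(X-E)^2]=\E[X^2]-2E\,\E X+E^2=(E-\E X)^2+\bigl(\E[X^2]-(\E X)^2\bigr)$, a strictly convex quadratic in $E$ whose minimum value $\E[X^2]-(\E X)^2$ is, by definition, $\Var X$.
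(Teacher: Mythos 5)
Your proof is correct, but it takes a genuinely different route from the paper's. You complete the square algebraically: decomposing $X-E=(X-\E X)+(\E X-E)$, killing the cross term, and reading off the exact identity $\left\Vert X-E\right\Vert_2^2=\Var X+(\E X-E)^2$, from which the claim is immediate. The paper instead argues by calculus, differentiating $\left\Vert X-E\right\Vert_2^2$ twice in $E$ to locate the minimizer and confirm convexity. (As an aside, the paper's displayed derivatives carry sign slips --- the correct values are $\frac{d}{dE}\left\Vert X-E\right\Vert_2^2=2(E-\E X)$ and $\frac{d^2}{dE^2}\left\Vert X-E\right\Vert_2^2=2$; with the signs as printed the critical point would be a maximum.) Your algebraic version is arguably preferable here: it is more elementary, it yields the exact excess $(E-\E X)^2$ rather than merely the location of the optimum, and it sidesteps the sign bookkeeping entirely. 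The calculus route is shorter to state but buys nothing extra for a quadratic in a single real variable.
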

\begin{proof}
Differentiate twice with respect to $E$:
\begin{eqnarray*}
\frac{d}{dE}\left\Vert X-E\right\Vert _{2}^{2} & = & 2\left({\E}X-E\right)\\
\frac{d^{2}}{dE^{2}}\left\Vert X-E\right\Vert _{2}^{2} & = & -2.
\end{eqnarray*}
\end{proof}

\end{document}